\theoremstyle{plain}
\newtheorem{thm}{Theorem}[section]
\newtheorem{cor}[thm]{Corollary}
\newtheorem{conj}[thm]{Conjecture}
\newtheorem{prop}[thm]{Proposition}
\newtheorem{lemma}[thm]{Lemma}
\newtheorem{claim}[thm]{Claim}
\newtheorem{question}[thm]{Question}
\newtheorem*{claim*}{Claim}
\theoremstyle{definition}
\newtheorem{remark}[thm]{Remark}
\newcommand{\comment}[1]{}
\newcommand{\bdry}{\ensuremath{\partial}}
\DeclareMathOperator{\Id}{Id}
\DeclareMathOperator{\vol}{vol}
\newcommand{\Q}{\ensuremath{\mathbb{Q}}}
\newcommand{\Z}{\ensuremath{\mathbb{Z}}}
\newcommand{\mobius}{M\"{o}bius }
\newcommand{\bgi}{{\mbox{\sc bgi}}}
\newcommand{\bgii}{{\mbox{\sc bgii}}}
\newcommand{\bgiii}{{\mbox{\sc bgiii}}}
\newcommand{\bgiv}{{\mbox{\sc bgiv}}}
\newcommand{\bgv}{{\mbox{\sc bgv}}}
\newcommand{\gofk}{{\mbox{\sc gofk}}}
\newcommand{\spor}{{\mbox{\sc spor}}}
\newcommand{\executeiffilenewer}[3]{%
\ifnum\pdfstrcmp{\pdffilemoddate{#1}}%
{\pdffilemoddate{#2}}>0%
{\immediate\write18{#3}}\fi%
}
\newcommand{%
\executeiffilenewer{.svg}{.eps}%
{inkscape -z -D --file=.svg %
--export-eps=.eps --export-latex}%
\input{.eps_tex}%
}[1]{%
\executeiffilenewer{#1.svg}{#1.eps}%
{inkscape -z -D --file=#1.svg %
--export-eps=#1.eps --export-latex}%
\input{#1.eps_tex}%
}
\title{
Some knots in $S^1 \times S^2$ with lens space surgeries.}
\author{Kenneth L.\ Baker}
\address{
Department of Mathematics
University of Miami,
PO Box 249085
Coral Gables, FL 33124-4250}
\email{k.baker@math.miami.edu}
\urladdr{http://math.miami.edu/\char126 kenken}
\author{Dorothy Buck}
\address{Dept of Mathematics, Imperial College London, South Kensington, London England SW7 2AZ}
\email{d.buck@imperial.ac.uk}
\urladdr{http://www2.imperial.ac.uk/~dbuck/}
\author{Ana G. Lecuona}
\address{LATP, Aix-Marseille Universit\'e, Marseille, France}
\email{ana.lecuona@latp.univ-mrs.fr}
\urladdr{http://www.math.psu.edu/lecuona/}
\begin{document}

\begin{abstract}
We propose a classification of knots in $S^1 \times S^2$ that admit a longitudinal surgery to a lens space.  Any lens space obtainable by longitudinal surgery on some knots in $S^1 \times S^2$ may be obtained from a Berge-Gabai knot in a Heegaard solid torus of $S^1 \times S^2$, as observed by Rasmussen.  We show that there are yet two other families of knots: those that lie on the fiber of a genus one fibered knot and the `sporadic' knots.  All these knots in $S^1 \times S^2$ are both doubly primitive and spherical braids.   

This classification arose from generalizing Berge's list of doubly primitive knots in $S^3$, though we also examine how one might develop it using Lisca's embeddings of the intersection lattices of rational homology balls bounded by lens spaces as a guide. We conjecture that our knots constitute a complete list of doubly primitive knots in $S^1 \times S^2$ and reduce this conjecture to classifying the homology classes of knots in lens spaces admitting a longitudinal $S^1 \times S^2$ surgery.
\end{abstract}

\subjclass[2000]{57M27} 

\maketitle

\section{Introduction}
A knot $K$ in a $3$--manifold $M$ is {\em doubly primitive} if it may be embedded in a genus $2$ Heegaard surface of $M$ so that it represents a generator of each handlebody, i.e.\ in each handlebody there is a compressing disk that $K$ transversally intersects exactly once.  With such a doubly primitive presentation, surgery on $K$ along the slope induced by the Heegaard surface yields a lens space.  Berge introduced this concept of doubly primitive and provided twelve families (which partition into three broader families) of knots in $S^3$ that are doubly primitive \cite{bergeII}. The Berge Conjecture asserts that if longitudinal surgery on a knot in $S^3$ produces a lens space, then that knot admits a presentation as a doubly primitive knot in a genus $2$ Heegaard surface in $S^3$ in which the slope induced by the Heegaard surface is the surgery slope.  This conjecture is regarded as implicit in \cite{bergeII}.

This conjecture has a prehistory fueled by the classification of lens space surgeries on torus knots \cite{moser}, notable examples of longitudinal lens space surgeries on non-torus knots \cite{baileyrolfsen,therons}, the Cyclic Surgery Theorem \cite{cgls}, the resolution of the Knot Complement problem \cite{gl:kadbtc},  several treatments of lens space surgeries on satellite knots \cite{wu, bleilerlitherland, wang}, and the classification of surgeries on knots in solid tori producing solid tori \cite{gabaiI, gabaiII, bergeS1xD2} to name a few.  The modern techniques of Heegaard Floer homology \cite{OSfoundations1,OSfoundations2} opened new approaches that reinvigorated the community's interest and gave way to deeper insights of positivity \cite{hedden-positive}, fiberedness \cite{ni}, and simplicity \cite{OSLspace,HeddenSimple,RasmussenSimple}.  

One remarkable turn is Greene's solution to the Lens Space Realization Problem \cite{greene}.  Utilizing the correction terms of Heegaard Floer homology \cite{OScorrectionterms}, Greene adapts and enhances Lisca's lattice embedding ideas \cite{lisca} to determine not only  which lens spaces may be obtained by surgery on a knot in $S^3$ but also the homology classes of the corresponding dual knots in those lens spaces.  This gives the pleasant corollary that Berge's twelve families of doubly primitive knots in $S^3$ is complete.

Our present interest lies in the results of Lisca's work \cite{lisca} which, with an observation by Rasmussen \cite[Section 1.5]{greene}, solves the $S^1 \times S^2$ version of the Lens Space Realization Problem.  That is, the lens spaces which bound rational homology $4$--balls as determined by Lisca may each be obtained by longitudinal surgery on some knot in $S^1 \times S^2$.  (Note that if a lens space results from longitudinal surgery on a knot in $S^1 \times S^2$ then it necessarily bounds a rational homology $4$--ball.)  In fact, as Rasmussen observed, the standard embeddings into $S^1 \times S^2$ of the Berge-Gabai knots in solid tori with longitudinal surgeries yielding solid tori suffice. Due to the uniqueness of lattice embeddings in Lisca's situation versus the flexibility of lattice embeddings in his situation, Greene had initially conjectured that these accounted for all knots in $S^1 \times S^2$ with lens space surgeries \cite{greene}.    In this article we show that there are yet two more families of knots and probe their relationships with Lisca's lattice embeddings.  Indeed, here we begin a program to bring the status of the classification of knots in $S^1 \times S^2$ with lens space surgeries in line with the present state of the Berge Conjecture.  The main purpose of this article is to propose such a classification of knots and provide a foundation for showing our knots constitute all the doubly primitive knots in $S^1 \times S^2$.

\begin{conj}[Cf. Conjecture~1.9 \cite{greene}]\label{conj:main} 
 The knots in $S^1 \times S^2$ with a longitudinal surgery producing a lens space are all doubly-primitive. 
\end{conj}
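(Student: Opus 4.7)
The plan is to adapt Greene's approach to the Lens Space Realization Problem via the dual knot perspective. Let $K^*\subset L$ be the core of the solid torus glued in by the surgery, so that $K^*$ admits a longitudinal surgery back to $S^1\times S^2$. The conjecture reduces to showing that $K^*$ lies on a genus-one Heegaard torus of $L$ as a $(1,1)$-knot of a particular form; stabilizing that torus across a meridian disk on the $K^*$ side then produces a genus-two Heegaard surface of $S^1\times S^2$ exhibiting $K$ as doubly primitive with the surgery slope matching the surface slope.

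I would proceed in three steps. First, invoke Lisca's theorem together with Rasmussen's observation recorded in the introduction to pin down the candidate lens spaces: such an $L$ must bound a rational homology $4$-ball, and these are explicitly classified. Second, classify the possible homology classes $[K^*]\in H_1(L)$ of a knot in $L$ admitting a longitudinal $S^1\times S^2$ surgery; this is precisely the reduction advertised in the abstract, and combined with the doubly primitive examples produced in the body of the paper it should follow that exactly the classes realized by our three families (Berge-Gabai, fiber of a genus-one fibered knot, and sporadic) arise. Third, and crucially, show that the knot $K^*$ is determined up to isotopy in $L$ by its homology class together with the existence of the longitudinal $S^1\times S^2$ surgery.

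The third step is the genuine obstacle and is the direct analogue of the Berge Conjecture itself. In the $S^3$ setting Greene combined lattice-embedding restrictions with $d$-invariant rigidity for L-space knots and the simple-knot classifications of Hedden and Rasmussen. Here $S^1\times S^2$ is not an L-space, so a direct translation of that argument fails; one instead needs a Floer-theoretic input comparing $\widehat{HF}(S^1\times S^2)$ in the $\mathrm{Spin}^c$-structure arising from the surgery with the knot Floer homology of $K^*\subset L$. A natural candidate statement is that any such $K^*$ must be Floer simple in $L$, in which case existing simple-knot classifications together with Step~2 close out the argument. An alternative route is to use the rational-tangle / branched-double-cover viewpoint that has proved fruitful in related surgery problems to rule out non-$(1,1)$ candidates for $K^*$ directly. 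In either case, Steps~1 and~2 appear to be within reach by methods already present in this paper, while Step~3 is where a decisive new idea is required.
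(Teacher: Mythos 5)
The statement you were asked to prove is Conjecture~\ref{conj:main}; the paper does not prove it and explicitly leaves it open, so there is no proof of record to compare against. Your submission is likewise not a proof but a program, and you say as much (``Step~3 is where a decisive new idea is required''). That candor is appropriate, but it means the proposal has a genuine gap rather than a complete argument. Concretely: your Step~3 --- showing that a knot $K^*$ in a lens space with a longitudinal $S^1\times S^2$ surgery is determined by its homology class, i.e.\ is the simple knot in that class --- is exactly the $S^1\times S^2$ analogue of the Berge Conjecture. The paper's Theorem~\ref{thm:simplewave} settles this only under the additional hypothesis that $K^*$ is already a $(1,1)$--knot, using wave-coherence of $S^1\times S^2$ (Negami--Okita); removing that hypothesis is the entire difficulty, and neither the paper nor your proposal supplies a mechanism for it. Your suggested Floer-theoretic substitute (``$K^*$ must be Floer simple in $L$'') is a reasonable guess at what such a mechanism might look like, but it is stated as a wish, not derived.

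Two further calibration points. Your Step~2 is precisely Conjecture~\ref{conj:homologyclasses}, which the paper also leaves open (Cebanu is reported to have handled only the cases $L\cong L(m^2,md+1)$ with $\gcd(m,d)=1,2$), and Remark~\ref{rem:embeddings} explicitly cautions that Lisca's lattice embeddings, unlike Greene's changemaker framework, do not appear to yield the homology-class classification; so ``within reach by methods already present in this paper'' overstates the situation. Finally, note that the reduction the paper actually proves (Theorem~\ref{thm:reduction}) is that Conjecture~\ref{conj:homologyclasses} implies Conjecture~\ref{conj:dp}, the classification of \emph{doubly primitive} knots --- it does not touch Conjecture~\ref{conj:main}, because even with Steps~1 and~2 in hand one still needs Step~3 to know that an arbitrary knot with a lens space surgery has a $(1,1)$ (hence simple) dual. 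Your outline is a faithful map of the open terrain, but it does not constitute a proof of the statement.
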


\begin{conj}\label{conj:dp}
A doubly-primitive knot in $S^1 \times S^2$ is either a Berge-Gabai knot, a knot that embeds in the fiber of a genus one fibered knot, or a sporadic knot.
\end{conj}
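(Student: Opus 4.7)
The plan is to translate the doubly primitive condition into a tightly constrained combinatorial description on a genus $2$ Heegaard surface of $S^1 \times S^2$ and then match the resulting types against the three geometric families. Fix a doubly primitive presentation of $K$ on $\Sigma$ with compressing discs $D_i \subset H_i$ each transverse to $K$ once. Compressing $H_i$ along $D_i$ produces a once-punctured torus $T_i$ containing $K$ as a properly embedded essential arc, and the pair $(T_1, T_2)$, glued along the annular neighborhood of $K$ in $\Sigma$, recovers $\Sigma$. Thus a doubly primitive knot is recorded by a pair of arcs on once-punctured tori together with a gluing datum, exactly the combinatorial set-up Berge used in $S^3$.

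The first case split would be according to whether $K$ can be isotoped into a Heegaard solid torus $V \subset S^1 \times S^2$, equivalently, whether the genus $2$ splitting carrying $K$ is stabilized from the standard genus $1$ splitting of $S^1 \times S^2$. In the stabilized case, $K$ sits in a solid torus so that the Heegaard slope of $\Sigma$ restricts to a slope whose surgery produces a solid torus summand of the resulting lens space; the Berge-Gabai classification of solid-torus surgeries on knots in solid tori \cite{gabaiI, gabaiII, bergeS1xD2} then forces $K$ to be a Berge-Gabai knot. In the non-stabilized case, $K$ must meet every nonseparating two-sphere essentially, and I would use the doubly primitive structure together with the arc data on $(T_1,T_2)$ to exhibit $K$ on the fiber of a genus one fibered knot in $S^1 \times S^2$, with a finite exceptional list corresponding to the sporadic family.

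The main obstacle is the non-stabilized case: ruling out doubly primitive presentations that lie outside our three families. Here I would cross-reference with the Lisca--Rasmussen lattice picture discussed in the introduction. For each lens space $L$ bounding a rational homology four-ball, Lisca classifies the intersection-lattice embeddings, and each doubly primitive knot in $S^1\times S^2$ with surgery $L$ induces such an embedding via its dual knot. The strategy is to show that each Lisca embedding admits an essentially unique compatible combinatorial arc pattern on $(T_1, T_2)$, and then to verify directly that the realized patterns correspond to our three families. The hard part is this rigidity statement for the lattice-to-arc-pattern correspondence, particularly for the embeddings that should realize the sporadic family, where no clean geometric structure is available and the matching must be done by explicit case analysis. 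Even granting this, one must confirm that two doubly primitive presentations inducing the same lattice datum are isotopic in $S^1 \times S^2$, which should reduce to the uniqueness of the genus $2$ Heegaard splitting of $S^1 \times S^2$ together with the rigidity of the primitivity words in $\pi_1(H_i) \cong F_2$.
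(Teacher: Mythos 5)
This statement is a conjecture, and the paper does not prove it. What the paper does prove is Theorem~\ref{thm:reduction}: Conjecture~\ref{conj:homologyclasses} (a classification of the homology classes of knots in lens spaces admitting a longitudinal $S^1\times S^2$ surgery) implies Conjecture~\ref{conj:dp}. The mechanism is Theorem~\ref{thm:simplewave} --- via Negami--Okita's wave theorem, the surgery dual of a doubly primitive knot in $S^1\times S^2$ is a \emph{simple} knot in the resulting lens space, and there is a unique simple knot in each homology class --- so the whole problem collapses to identifying the possible homology classes of the duals, which Theorem~\ref{thm:mainhomology} computes for the three known families and which remains open in general (Cebanu has settled only the first two lens space types). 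Your proposal should therefore be judged as an attempt at the open problem, not as a reconstruction of an existing argument.

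As such an attempt, it has genuine gaps. First, your opening dichotomy is not a dichotomy: $S^1\times S^2$ has Heegaard genus one, so its genus~$2$ splitting is unique up to isotopy and is \emph{always} the stabilization of the genus~$1$ splitting. Hence ``whether the splitting carrying $K$ is stabilized'' never separates cases, and --- more importantly --- a knot lying on a stabilized splitting need not be isotopic into a Heegaard solid torus, so the reduction of your first case to the Berge--Gabai classification of solid torus surgeries does not follow. Second, the entire content of the hard direction is deferred: ``exhibit $K$ on the fiber of a genus one fibered knot, with a finite exceptional list'' and ``show that each Lisca embedding admits an essentially unique compatible arc pattern'' are assertions of exactly what must be proved. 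The paper's Remark~\ref{rem:embeddings} and Section~\ref{sec:embeddings} explain why the Lisca lattice route is problematic here: unlike Greene's changemaker framework in $S^3$, Lisca's embeddings do not appear to determine the homology classes of the dual knots, and indeed the {\sc spor} duals are \emph{not} suggested by the embeddings (the relevant basis vector fails to be a keystone). So the cross-referencing step you propose as the backbone of the non-solid-torus case runs directly into the obstruction the authors identify. If you want a viable route, the paper's own reduction is the better scaffold: prove that the dual is a simple knot (this part is done), then pin down the admissible homology classes, e.g.\ by the fiberedness/planar-fiber program attributed to Cebanu.
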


The three families of knots in Conjecture~\ref{conj:dp} are analogous to the three broad of families of Berge's doubly primitive knots in $S^3$ and will be described below.

Section~\ref{sec:notation} contains some of the basic terminology and notation that will be used.

\subsection{Lens spaces obtained by surgery on knots in $S^1 \times S^2$} 
Lisca determines whether a $3$--dimensional lens space bounds a $4$--dimensional rational homology ball by studying the embeddings into the standard diagonal intersection lattice of the intersection lattice of the canonical plumbing manifold bounding that lens space, \cite{lisca}. From this and that lens spaces are the double branched covers of two-bridge links, Lisca obtains a classification of which two-bridge {\em knots} in $S^3 =\bdry B^4$ bound smooth disks in $B^4$ (i.e.\ are slice) and which two-component two-bridge links bound a smooth disjoint union of a disk and a M\"obius band  in $B^4$.  As part of doing so, he demonstrates that in the projection to $S^3$ these surfaces may be taken to have only ribbon singularities.   Indeed he shows this by using a single banding to transform these two-bridge links into the unlink, except for two families: one  for which he uses two bandings and another which was overlooked.

Via double branched covers and the Montesinos Trick, the operation of a banding lifts to the operation of a longitudinal surgery on a knot in the double branched cover of the original link.  Since the double branched cover of the two component unlink is $S^1 \times S^2$, Lisca's work shows in many cases that the lens spaces bounding rational homology balls contain a knot on which longitudinal surgery produces $S^1 \times S^2$.  In fact, the lens spaces bounding rational homology balls  are precisely those that contain a knot on which longitudinal surgery produces $S^1 \times S^2$:  Greene notes Rasmussen had observed that Lisca's list of lens spaces corresponds to those that may be obtained from considering the Berge-Gabai knots in solid tori with a solid torus surgery \cite{bergeS1xD2,gabaiI} as residing in a Heegaard solid torus of $S^1 \times S^2$, \cite[Section 1.5]{greene}.   By appealing to the classification of lens spaces up to homeomorphisms we may condense the statement as follows:

\begin{thm}[Rasmussen via {\cite[Section 1.5]{greene}}]
\label{thm:main}
The lens space $L$ may be transformed into $S^1 \times S^2$ by longitudinal surgery on a knot if and only if there are integers $m$ and $d$ such that $L$ is homeomorphic to one of the four lens spaces:
\begin{enumerate}
\item $L(m^2, md+1)$ such that $\gcd(m,d)=1$;
\item $L(m^2, md+1)$ such that $\gcd(m,d)=2$;
\item $L(m^2, d(m-1))$ such that $d$ is odd and divides $m-1$; or
\item $L(m^2, d(m-1))$ such that $d$ divides $2m+1$.
\end{enumerate}
\end{thm}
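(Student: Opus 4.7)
The plan is to establish both directions of the equivalence by combining Lisca's classification of lens spaces bounding rational homology $4$-balls \cite{lisca} with Rasmussen's realization of these lens spaces by Berge-Gabai knots in Heegaard solid tori, followed by a condensation step that uses the lens-space homeomorphism classification.

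For the forward direction, suppose $L$ is obtained from $S^1 \times S^2$ by longitudinal surgery on a knot $K$. One caps off the surgery cobordism on its $S^1 \times S^2$ end with $S^1 \times D^3$ to produce a smooth $4$-manifold $X$ with $\partial X = L$. Since $K$ must represent a nonzero class $m \in \Z = H_1(S^1 \times S^2)$ (otherwise $H_1$ of the surgered manifold is infinite and cannot be that of a lens space), a handle-homology computation on $X$ (a $B^4$ with one $1$-handle and one $2$-handle) shows that $H_1(X) \cong \Z/m$ and $H_2(X)=0$, so $X$ is a rational homology $4$-ball. Lisca's theorem then forces $L$ to belong to his enumerated family of lens spaces bounding rational homology balls.

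For the backward direction, I would exhibit, for each $L$ in the four listed families, a Berge-Gabai knot $\kappa \subset V$ in a solid torus whose $V$-longitudinal surgery yields another solid torus, placed as residing in a Heegaard solid torus $V \subset S^1 \times S^2$. In each of the four cases, direct computation of the resulting gluing shows that the surgery on $S^1 \times S^2$ produces a lens space $L'$ homeomorphic to $L$. Reversing the surgery then yields a knot in $L$ whose longitudinal surgery returns $S^1 \times S^2$. The four families of the statement match, under this construction, the four basic families in the Berge-Gabai classification of solid-torus surgeries on knots in solid tori \cite{bergeS1xD2, gabaiI}.

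The main obstacle is the number-theoretic reconciliation step: verifying that Lisca's combinatorial list, parameterized via continued-fraction data of his lattice embeddings, coincides up to lens-space homeomorphism with the clean families $L(m^2, md \pm 1)$ and $L(m^2, d(m \pm 1))$ appearing in the statement, and that these in turn match the parameters extracted from the Berge-Gabai surgery descriptions. This requires systematic use of the classification $L(p,q) \cong L(p,q')$ iff $q' \equiv q^{\pm 1} \pmod{p}$ and a careful case-by-case comparison. The work is elementary but substantial, and I would approach it by tabulating Lisca's list and the four Berge-Gabai surgery families in parallel, normalizing each via the homeomorphism classification and matching them term by term.
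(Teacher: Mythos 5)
Your outline is essentially the paper's own argument (credited to Rasmussen): the forward direction is the observation that capping the surgery trace with $S^1\times D^3$ yields a rational homology ball, so Lisca's classification applies, and the backward direction is realized by the Berge--Gabai knots sitting in a Heegaard solid torus of $S^1\times S^2$ --- the paper carries this out downstairs, as bandings taking the two-component unlink to two-bridge links, and gives a second realization via lattice embeddings.

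One concrete warning about the reconciliation step you correctly identify as the main obstacle: if you tabulate Lisca's published Definition~1.1 and match term by term, the match will \emph{fail} for family (2). By an oversight in the statement of Lisca's Lemma~7.2 (whose proof nonetheless produces the relevant strings of coefficients), the lens spaces $L(m^2,md+1)$ with $\gcd(m,d)=2$ --- realized here by the $(2,\pm1)$--cables of torus knots, family {\sc bgii} --- are absent from his stated list, so your construction would appear to produce lens spaces that Lisca's theorem, read literally, says do not bound rational homology balls. Resolving this requires going back to Lisca's proof rather than his statement; see Remark~\ref{rem:missing}. With that correction in hand, the rest of your plan (the continued-fraction identity $[b_1,\dots,b_k,c,-b_k,\dots,-b_1]^-=cP_k^2/(cP_kQ_k+1)$ together with normalization via $L(p,q)\cong L(p,q')$ iff $q'\equiv q^{\pm1}\pmod p$) goes through as in the paper. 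A minor parametrization point: the Berge--Gabai knots in solid tori form five families, not four; families III and V yield homeomorphic lens spaces, which is why only four lens space families appear in the statement.
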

Note that we do permit $m$ and $d$ to be negative integers.
We will augment this theorem in Theorem~\ref{thm:mainhomology} with the homology classes known to contain the knots dual to these longitudinal surgeries from $S^1 \times S^2$.

Since the Berge-Gabai knots in solid tori all have tunnel number one, the corresponding knots in $S^1 \times S^2$ are strongly invertible.  Quotienting by this strong inversion gives the analogous result for bandings of two-bridge links to the unlink (of two components).
\begin{cor}
The two-bridge link $K$ may be transformed into the unlink by a single banding if and only if there are integers $m$ and $d$ such that $K$ is homeomorphic to one of the four two-bridge links:
\begin{enumerate}
\item $K(m^2, md+1)$ such that $\gcd(m,d)=1$;
\item $K(m^2, md+1)$ such that $\gcd(m,d)=2$;
\item $K(m^2, d(m-1))$ such that $d$ is odd and divides $m-1$; or
\item $K(m^2, d(m-1))$ such that $d$ divides $2m+1$,
\end{enumerate}
\end{cor}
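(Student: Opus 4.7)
The plan is to deduce the corollary from Theorem~\ref{thm:main} via the Montesinos trick. Recall that the double branched cover of $S^3$ over the two-bridge link $K(p,q)$ is the lens space $L(p,q)$, and the double branched cover of $S^3$ over the two-component unlink is $S^1\times S^2$. Under these covers, a banding of a link in $S^3$ lifts to a longitudinal surgery on a knot in the double branched cover, and conversely any longitudinal surgery on a strongly invertible knot whose axis of symmetry extends compatibly across the surgery descends to a banding.

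For the forward direction, suppose the two-bridge link $K$ admits a banding to the unlink. Lifting through the double branched covers, the lens space $L$ that double-branched covers $K$ admits a longitudinal surgery to $S^1\times S^2$. Theorem~\ref{thm:main} then forces $L$ to be homeomorphic to one of the four listed lens spaces $L(m^2,md+1)$ or $L(m^2,d(m-1))$ under the stated arithmetic conditions on $m$ and $d$. Since two-bridge links are determined up to homeomorphism by their double branched covers (a classical fact), $K$ must be homeomorphic to the two-bridge link $K(p,q)$ carrying the same parameters.

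For the reverse direction, given one of the four two-bridge links $K(p,q)$, one must produce an actual banding to the unlink. Here one uses the fact noted just before the corollary: the knots in $S^1\times S^2$ arising from embedding the Berge--Gabai knots of \cite{bergeS1xD2,gabaiI} into a Heegaard solid torus have tunnel number one and are therefore strongly invertible. The strong involution of the solid torus extends to a strong involution of $S^1\times S^2$ whose quotient is $S^3$ branched over the two-component unlink. Longitudinal surgery on the Berge--Gabai knot produces the desired lens space $L(p,q)$ while equivariantly modifying the branch locus; the quotient of this surgery is exactly a banding of the unlink that produces the two-bridge link $K(p,q)$. Reading this banding backwards gives a banding transforming $K(p,q)$ into the unlink.

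The main obstacle is ensuring that the strong inversion is genuinely compatible with the Heegaard solid torus structure and that the longitudinal surgery is equivariant, so that the Montesinos trick actually applies and the resulting tangle replacement in $S^3$ is a single banding rather than a more complicated rational tangle replacement. This reduces to checking that the Berge--Gabai knots admit a strong inversion whose axis meets a Seifert surface for the knot in a single arc, which is standard for $(1,1)$-knots in solid tori but should be verified case-by-case for the families enumerated in \cite{bergeS1xD2,gabaiI}; the uniqueness of two-bridge links up to homeomorphism by their double branched covers then finishes the identification of $K$.
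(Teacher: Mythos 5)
Your proposal is correct and follows essentially the same route as the paper: the forward direction is Theorem~\ref{thm:main} plus the Montesinos trick and the classical fact that two-bridge links are determined by their double branched covers, and the reverse direction uses the strong invertibility of the Berge--Gabai knots (via tunnel number one) to quotient the longitudinal surgery down to a banding of the unlink. The compatibility issue you flag at the end is exactly what the paper disposes of by exhibiting explicit equivariant tangle descriptions of the Berge--Gabai arcs (Figures~\ref{fig:RationalTangleRSR}--\ref{fig:BGIVandIV2}, drawing on \cite{bakerbuck}), so no further case-by-case verification is needed.
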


\begin{remark}\label{rem:missing}
By an oversight in the statement of \cite[Lemma,7.2]{lisca}, a family of strings of integers was left out though they are produced by the proof (cf.\ \cite[Footnote p.\ 247]{Le}).  The use of this lemma in \cite[Lemma 9.3]{lisca} causes the second family in Theorem~\ref{thm:main} above to be missing from \cite[Definition~1.1]{lisca}.  
Consequentially, the associated  two-bridge links (these  necessarily have two components) were also not shown to bound a disjoint union of a disk and a M\"obius band in $B^4$ in that article.   (Also, we have swapped the order of the last two families.)
\end{remark}

Prompted by the uniqueness of Lisca's lattice embeddings (Lemma~\ref{lem:uniqueembedding}) and seemingly justified by
 Rasmussen's observation, Greene had originally conjectured that if a knot in $S^1 \times S^2$ admits a longitudinal lens space surgery, then it arises from a Berge-Gabai knot in a Heegaard solid torus of $S^1 \times S^2$, \cite[Conjecture~1.8]{greene}.  These knots belong to five families which we call {\sc bgi}, {\sc bgii}, {\sc bgiii}, {\sc bgiv}, and {\sc bgv} and refer to collectively as the family {\sc bg} of {\em Berge-Gabai knots}.  We show Greene's original conjecture is false by exhibiting two new families, {\sc gofk} and {\sc spor}, of knots in $S^1 \times S^2$ admitting longitudinal lens space surgeries. (In fact it turns out that Yamada had previously observed the {\sc gofk} family of knots admit lens space surgeries \cite{yamada}.)    Conjecture~\ref{conj:main} accordingly updates that conjecture with these two new families of doubly primitive knots in $S^1 \times S^2$. Conjecture~\ref{conj:dp} claims that there are no other doubly primitive knots.

\begin{thm}\label{thm:conj}  
The two families {\sc gofk} and {\sc spor} of knots in $S^1 \times S^2$ that admit a longitudinal lens space surgery, generically do not arise from Berge-Gabai knots.
\end{thm}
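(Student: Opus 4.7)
The plan is to distinguish the knots by the homology class of the \emph{dual} knot in the surgered lens space. By Theorem~\ref{thm:main}, every lens space produced by a longitudinal surgery on a \gofk{} or \spor{} knot is also produced by some Berge--Gabai longitudinal surgery, so the lens space alone cannot differentiate the three families. However, the dual knot $K^\ast \subset L(p,q)$ represents a class $[K^\ast] \in H_1(L(p,q)) \cong \Z/p\Z$ (well-defined up to sign), and this finer invariant remembers more of the surgery than the order $p$ of the first homology.

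The first step is to compute, family by family, the triple $(p,q,[K^\ast])$ as a function of the defining parameters. For \bgi--\bgv{} this follows from reading off the dual knot in the Berge--Gabai classification of solid-torus surgeries on knots in a solid torus and then pushing forward under the standard embedding of a Heegaard solid torus into $S^1 \times S^2$. For \gofk{} the computation takes place on the fiber surface of the ambient genus one fibered knot in $S^1 \times S^2$ and uses its monodromy. For \spor{} it is an ad hoc calculation from the sporadic doubly primitive presentation. The resulting table is precisely the content of the companion statement Theorem~\ref{thm:mainhomology}.

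Once the triples are tabulated, the proof of Theorem~\ref{thm:conj} reduces to a residue comparison modulo $p$. Fix a normal form $L(p,q)$ from Theorem~\ref{thm:main} and enumerate the Berge--Gabai dual classes realizing this lens space. For a \gofk{} or \spor{} knot with a generic choice of parameter, show that the corresponding dual class $[K^\ast]$ does not appear on the Berge--Gabai list. Since the homology class of the dual knot is an isotopy invariant of the original surgered pair, this implies that the \gofk{} or \spor{} knot is not isotopic in $S^1 \times S^2$ to any Berge--Gabai knot.

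The main obstacle is precisely the qualifier \emph{generically}. Some low-complexity members of \gofk{} or \spor{} genuinely coincide with Berge--Gabai knots, as the coincidence of lens spaces already suggests. Isolating the finite exceptional set of parameters for which such a coincidence occurs, and verifying that outside of this set the dual classes modulo $p$ really are distinct from those produced by every \bgi--\bgv{} family, is the delicate number-theoretic core of the argument.
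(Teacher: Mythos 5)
Your overall strategy is half the paper's and half a different one. For the \spor{} family you propose exactly what the paper does: Lemma~\ref{lem:spor} computes the dual classes in $L((4n-1)^2,8n^2-1)$ (namely $\pm 2(4n-1)\mu$ for \spor{} versus $\pm(4n-1)\mu$ and $\pm(2n-1)(4n-1)\mu$ for \bgv{} and \bgiii), and uses the fact that for $n\neq -1,0$ the mapping class group of this lens space acts on $H_1$ only by $\pm 1$, so the classes are genuinely distinct. For the \gofk{} family, however, the paper does \emph{not} use homology at all: Lemma~\ref{lem:gofk} is a hyperbolic volume argument --- the \gofk{} knots admit surgery descriptions on minimally twisted $(2n+1)$--chain links and, via Thurston's hyperbolic Dehn surgery theorem, contain hyperbolic knots of arbitrarily large volume, whereas every Berge--Gabai knot has a surgery description on the MT5C and hence volume below $\vol(\mathrm{MT5C})<11$. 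Your homology route for \gofk{} is plausible in principle (Theorem~\ref{thm:mainhomology} gives dual class $\pm dm$ for \gofk{} versus $\pm m$ for \bgi{} in $L(m^2,md+1)$), but be aware of two points. First, the ``delicate number-theoretic core'' you defer is the actual content: you must compare the \gofk{} class against the dual classes of \emph{all} Berge--Gabai families under \emph{all} homeomorphisms identifying their lens spaces with $L(m^2,md+1)$, and the induced maps on $H_1$ are multiplication by $\pm q^{\pm 1}$, not just $\pm 1$; the volume argument sidesteps this entirely. Second, the exceptional set for \gofk{} is not finite: whenever $d\equiv\pm 1\pmod m$ (e.g.\ $d=1$ for every $m$) the classes coincide, and indeed those knots really are \bgi{} knots, so your claim of a ``finite exceptional set'' is wrong as stated, though harmless for the word ``generically.'' Each approach buys something: yours would, if completed, pin down exactly which \gofk{} knots could coincide with Berge--Gabai knots at the level of surgery duals, while the paper's volume argument is softer but requires no case analysis.
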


\begin{proof}
Lemma~\ref{lem:gofk} shows that generically the knots {\sc gofk} are hyperbolic and ``most'' have volume greater than the hyperbolic Berge-Gabai knots.  Lemma~\ref{lem:spor} shows that, except in two cases, regardless of choice of orientations, the lens space surgery duals to the knots {\sc spor} are not in the same homology class as the dual to any Berge-Gabai knot. 
\end{proof}

We provide explicit demonstrations of Theorem~\ref{thm:main} from two different perspectives, both of which produce the new families {\sc gofk} and {\sc spor} in addition to the Berge-Gabai knots.

Taking lead from Berge's list of doubly primitive knots in $S^3$ \cite{bergeII} and the descriptions of their associated tangles \cite{bakerI,bakerII} we first obtain tangle descriptions of the Berge-Gabai knots in $S^1 \times S^2$ (by way of tangle descriptions of the Berge-Gabai knots in solid tori in \cite{bakerbuck}) to provide one proof of Theorem~\ref{thm:main}.  Then we generate families {\sc gofk} and {\sc spor} analogous to Berge's families VII, VIII and IX, X, XI, XII respectively from which Theorem~\ref{thm:conj} falls.  

Alternatively, the embeddings (given by Lisca) of the intersection lattice of the negative definite plumbing manifold bounded by a lens space suggest where an initial blow-down ought occur to cause the entire plumbing diagram to collapse to a zero framed unknot, see section~\ref{sec:embeddings}.  It turns out that the perhaps more obvious ones happen to correspond to the duals to the {\sc bg} knots though the duals to the {\sc gofk} also arise, Lemma~\ref{lem:BGembedding}.  Having found the family {\sc spor} by the above tangle method, 
 we are able to identify blow-downs corresponding to the duals of these knots as well, though they do not usually cause the plumbing diagram to collapse completely.  We discuss this in section~\ref{sec:latticeknots}.

\subsection{Simple knots}

A {\em $(1,1)$--knot} is a knot $K$ that admits a presentation as a $1$--bridge knot with respect to a genus $1$ Heegaard splitting of the manifold $M$ that contains it.  That is, $M$ may be presented as the union of two solid tori $V_\alpha$ and $V_\beta$ in which each $K \cap V_\alpha$ and $K \cap V_\beta$ is a boundary parallel arc in the respective solid torus.   We say $K$ is {\em simple} if furthermore there are meridional disks of $V_\alpha$ and $V_\beta$ whose boundaries intersect minimally in the common torus $V_\alpha \cap V_\beta$ in $M$ such that each arc $K \cap V_\alpha$ and $K \cap V_\beta$ is disjoint from these meridional disks.  One may show there is a unique (oriented) simple knot in each (torsion) homology class of a lens space.   Let us write $K(p,q,k)$ for the simple knot in $M=L(p,q)$ oriented so that it represents the homology class $k \mu$ where (for a choice of orientation) $\mu$ is the homology class of the core curve of one of the Heegaard solid tori and $q\mu$ is the homology class of the other. Observe that  trivial knots are simple knots and, as such, permits both $S^3$ and $S^1 \times S^2$ to have a simple knot. There are no simple knots representing the non-torsion homology classes of $S^1 \times S^2$.  Non-trivial simple knots have also been called {\em grid number one} knots, e.g.\ in \cite{bakergrigsbyhedden, bakergrigsby} among others.

The Homma-Ochiai-Takahashi recognition algorithm for $S^3$ among genus $2$ Heegaard diagrams \cite{hot} says that a genus $2$ Heegaard diagram of $S^3$ is either the standard one or contains what is called a {\em wave}, see Section~\ref{sec:simplewave}.  A wave in a Heegaard diagram indicates the existence of a handle slide that will produce a new Heegaard diagram for the same manifold with fewer crossings. As employed by Berge \cite{bergeII}, the existence of waves ultimately tells us that any $(1,1)$--knot in a lens space with a longitudinal $S^3$ surgery is isotopic to a simple knot.
As the dual to a doubly primitive knot is necessarily a $(1,1)$--knot, it follows that the dual to a doubly primitive knot in $S^3$ is a simple knot in the resulting lens space.

Negami-Okita's study of reductions of diagrams of $3$--bridge links gives insights to the existence of wave moves on genus $2$ Heegaard diagrams.

\begin{thm}[Negami-Okita \cite{negamiokita}]\label{thm:negamiokita}
Every Heegaard diagram of genus $2$ for $S^1 \times S^2 \# L(p,q)$ may be transformed into one of the standard ones by a finite sequence of wave moves.
\end{thm}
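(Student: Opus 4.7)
The plan is to transport the problem through the $2$-to-$1$ quotient by the hyperelliptic involution on the genus $2$ Heegaard surface. This descends to the classical correspondence between genus $2$ Heegaard diagrams of a closed orientable $3$-manifold $M$ and $3$-bridge projections of a link $L \subset S^3$ whose double branched cover is $M$: the two disk systems of the Heegaard diagram project to the two systems of bridge arcs of $L$ in the projection sphere. Under this correspondence, $S^1 \times S^2 \# L(p,q)$ is double-covered by $S^3$ branched over the connect sum of a two-component unlink (contributing the $S^1 \times S^2$ summand) with a two-bridge link whose double branched cover is $L(p,q)$. A wave move on a Heegaard diagram descends to a reducing move on the $3$-bridge projection that strictly simplifies it, for instance by decreasing the intersection number of the two curve systems $\alpha \cup \beta$ on $\Sigma$. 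It therefore suffices to prove the analogous reduction theorem for $3$-bridge projections of these specific split-composite links.

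To do so I would induct on this complexity. The base case consists of the standard projections, easily enumerated as the standard $2$-bridge projection of the two-bridge summand together with two parallel trivial bridge arcs for the unlink summand, separated by a splitting sphere. For the inductive step I would show that any non-standard projection admits a reducing move, paralleling the Homma-Ochiai-Takahashi argument for $S^3$: analyse the cell decomposition of the projection sphere cut out by $L$ and locate an innermost bigon, a triangle, or an outermost arc that furnishes the needed wave. The wave then lifts back to a wave on the Heegaard diagram, and induction closes.

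The main obstacle is handling the non-trivial first homology coming from the two summands. Unlike the $S^3$ case, one must keep track of two distinguished families of simple closed curves on the projection sphere: a splitting circle separating the branch link into its $2$-bridge and unlink summands, and a splitting circle of the unlink summand itself whose preimage in the double cover is the essential sphere of the $S^1 \times S^2$ factor. The inductive complexity argument must be compatible with these separating curves so that each reducing move preserves the fact that we are looking at a projection of the prescribed split-composite link rather than of some other $3$-bridge link. I would combine an innermost-circle argument for these splitting curves with the planar cell analysis above to localise a wave within one of the summands, in effect reducing to the cases of $L(p,q)$ (handled by the two-bridge analogue of Homma-Ochiai-Takahashi) and of $S^1 \times S^2$ (where the classification of $3$-bridge presentations of the $2$-component unlink is explicit). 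Once reduction terminates, the resulting minimal diagram is forced to be standard up to homeomorphism of the projection sphere respecting the two splittings; lifting to the double cover then recovers the standard genus $2$ Heegaard diagram of $S^1 \times S^2 \# L(p,q)$.
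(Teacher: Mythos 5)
This statement is quoted from Negami--Okita; the paper offers no proof of it, so there is nothing internal to compare your argument against. Your opening move --- quotienting by the hyperelliptic involution and recasting the problem as a reduction theory for $3$--bridge projections of the branch link (here the split union of an unknot and the two-bridge link $K(p,q)$, which is what realizes $S^1\times S^2\# L(p,q)$ as a double branched cover) --- is indeed the setting in which Negami and Okita work, and the dictionary between waves on genus $2$ Heegaard diagrams and reducing moves on $3$--bridge projections is legitimate via Birman--Hilden.

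The genuine gap is that your inductive step is an assertion, not an argument. The entire content of the theorem is the claim that every non-standard projection (equivalently, every non-standard genus $2$ diagram) admits a wave, and you dispose of this by saying one should ``locate an innermost bigon, a triangle, or an outermost arc that furnishes the needed wave, paralleling Homma--Ochiai--Takahashi.'' That parallel cannot be taken for granted: the corresponding statement is \emph{false} for general genus $2$ manifolds, and even for lens spaces --- the paper itself points out that Osborne's diagrams of $L(173,78)$ and $L(85,32)$ show that wave moves need not carry a genus $2$ diagram of a lens space to the standard stabilized one, and wave-free non-standard diagrams of certain Seifert fibered spaces are classical. So any correct proof must isolate and exploit a property specific to $S^1\times S^2\# L(p,q)$ (equivalently, to $3$--bridge projections of the split link $U\sqcup K(p,q)$) that guarantees a wave when the diagram is non-standard. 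Your sketch never identifies that property; the bookkeeping of the two splitting circles that you describe addresses only the question of staying within the right class of links under reduction, not the existence of the reducing move itself. As written, the same outline would ``prove'' the false general statement, so the induction does not close.
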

Here, a {\em standard} genus $2$ Heegaard diagram $H=(\Sigma, \{\alpha_1,\alpha_2\},\{\beta_1, \beta_2\})$ for $S^1 \times S^2 \# L(p,q)$ is one for which  $\alpha_1$ and $\beta_1$ are parallel and disjoint from $\alpha_2 \cup \beta_2$,  and $\alpha_2 \cap \beta_2$ consists of exactly $p$ points.  If $p \neq1$, then the standard diagrams are not unique.  For our case at hand however, $p=1$ and the standard diagram is unique (up to homeomorphism).  This enables a proof of a result analogous to Berge's.

\begin{thm}\label{thm:simplewave}\
\begin{enumerate}
\item A $(1,1)$--knot in a lens space with a longitudinal $S^1 \times S^2$ surgery is a simple knot.
\item The dual to a doubly primitive knot in $S^1 \times S^2$ is a simple knot in the corresponding lens space.
\end{enumerate}
\end{thm}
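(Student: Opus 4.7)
The plan is to mimic Berge's argument for the analogous $S^3$ statement recalled just above (cf.\ \cite{bergeII}), substituting Negami--Okita's Theorem~\ref{thm:negamiokita} for the Homma--Ochiai--Takahashi recognition of $S^3$. Part (2) will follow immediately from (1) together with the fact already cited in the text that the dual of a doubly primitive knot is a $(1,1)$-knot: since a doubly primitive knot $K \subset S^1 \times S^2$ has its surgery slope induced by the genus $2$ Heegaard surface, that same surface survives into the lens space $L$ as a stabilized Heegaard surface and destabilizes to a genus $1$ Heegaard torus on which the core $K^*$ of the surgery solid torus is a $(1,1)$-knot. The dual longitudinal surgery on $K^*$ recovers $S^1 \times S^2$, so applying part (1) to $K^*$ produces (2).

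For part (1), let $K \subset L(p,q) = V_\alpha \cup_\Sigma V_\beta$ be a $(1,1)$-knot with bridge arcs $a_\ast := K \cap V_\ast$ and a longitudinal surgery to $S^1 \times S^2$. The exteriors $H_\ast := V_\ast \setminus N(a_\ast)$ are genus $2$ handlebodies meeting along the twice-punctured torus $\Sigma \setminus N(K)$. Split the surgery solid torus $V_K$ along an essential annulus parallel to its core into two solid tori $W_\alpha, W_\beta$, arranged so that $\partial W_\ast$ contains $\partial N(K) \cap V_\ast$. Because the surgery is longitudinal, each amalgamation $H_\ast' := H_\ast \cup W_\ast$ is again a genus $2$ handlebody, and together they exhibit a genus $2$ Heegaard splitting of the surgered manifold $S^1 \times S^2 = S^1 \times S^2 \# L(1,q)$. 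Theorem~\ref{thm:negamiokita}, whose standard diagram is unique in this $p = 1$ case, then supplies a finite sequence of wave moves carrying the associated Heegaard diagram to the unique standard one.

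Each wave move is realized by a handle slide in the Heegaard splitting, which descends either to an isotopy of $K$ preserving the $(1,1)$-structure or to a destabilization of the $(1,1)$-presentation. In the standard genus $2$ diagram of $S^1 \times S^2$, one pair of compressing curves is parallel (recording the $S^1 \times S^2$ factor) while the other intersects as in the standard $S^3$ diagram; translating back through the reductions, the bridge arcs $a_\alpha, a_\beta$ can be arranged disjoint from meridional disks of $V_\alpha, V_\beta$ whose boundaries meet minimally on $\Sigma$, which is precisely the definition of a simple knot.

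The main technical obstacle lies in Step~1, namely making precise that the longitudinality of the surgery is exactly what forces both $H_\ast'$ to be genus $2$ handlebodies whose union is $S^1 \times S^2$ itself, rather than a connect sum with additional lens space factors. A secondary obstacle, in Step~2, is translating the wave moves on the abstract Heegaard diagram into isotopies of $K$ compatible with the $(1,1)$-presentation in $L(p,q)$, so that the simplified standard diagram genuinely exhibits the original $K$ (and not merely some abstractly related knot) as a simple knot in $L(p,q)$.
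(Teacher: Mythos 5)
Your overall strategy is the right one and is the same as the paper's: run Berge's argument (in Saito's formulation) with Negami--Okita's Theorem~\ref{thm:negamiokita} replacing the Homma--Ochiai--Takahashi recognition of $S^3$, and deduce part (2) from part (1) via the fact that the surgery dual of a doubly primitive knot is a $(1,1)$--knot. However, your proposal is a plan with two self-declared ``obstacles,'' and neither of those is the place where the $S^3$ argument actually breaks. The two steps you worry about are essentially routine: longitudinality of the surgery is exactly the condition under which the reglued solid torus is compatible with the genus $2$ splitting coming from the $(1,1)$--presentation, and the passage from the diagram back to simplicity of $K$ is not done by literally undoing wave moves but by Saito's induction (his Theorem~A.5 with Lemma~A.6), which only needs the conclusion that the remaining pair of curves $x_2$ and $y_2$ coherently intersect.

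The genuine gap is that the input to that induction --- Saito's Proposition~A.1, which derives coherent intersection of $x_2$ and $y_2$ from the existence of the curve $z$ --- rests on Ochiai's structure theorem for Whitehead graphs of genus $2$ Heegaard diagrams, and that theorem requires the Non-Empty Intersecting (NEI) Property: every $x_i$ must meet some $y_j$ and vice versa. For $S^3$ (indeed any homology sphere) this is automatic, but a genus $2$ Heegaard diagram of $S^1 \times S^2$ can have a $y_j$ disjoint from both $x_1$ and $x_2$, so your appeal to ``Berge's argument with Negami--Okita substituted'' silently invokes a lemma whose hypothesis may fail. This is precisely the ``technical issue one ought mind'' flagged in the text, and it is the content of Proposition~\ref{prop:coherentintersect}: when the NEI Property fails one shows directly that $W \cong S^1 \times S^2 \# L(p,q)$, that the disjoint curve must be $y_1$ (using that $z$ meets $x_1$ once and misses $x_2$), and then a hands-on analysis of the resulting diagram (Figure~\ref{fig:noNEIdiagram}) forces $x_2$ and $y_2$ to intersect coherently anyway. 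Without this case analysis your argument does not go through.
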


A proof of this follows similarly to Berge's proof for doubly primitive knots in $S^3$ and their duals, though there is a technical issue one ought mind.  We will highlight this as we sketch the argument of a more general result in section~\ref{sec:simplewave} following Saito's treatment of Berge's work in the appendix of \cite{saito}.

\subsection{The known knots in lens spaces with longitudinal $S^1 \times S^2$ surgeries.}

Since our knots in families {\sc bg}, {\sc gofk}, and {\sc spor} are all doubly primitive, then by Theorem~\ref{thm:simplewave}  their lens space surgery duals are simple knots.  In particular, this means these duals 
are all at most $1$--bridge with respect to the Heegaard torus of the lens space, and thus they admit a nice presentation in terms of linear chain link surgery descriptions of the lens space.  
This surgery description (which we first obtained by simplifying ones suggested by the lattice embeddings)
 facilitates the calculation of the homology classes of these dual knots and hence their descriptions as simple knots.

Given the lens space $L(p,q)$, let $\mu$ and $\mu'$ be homology classes of the core curves of the Heegaard solid tori oriented so that $\mu' = q \mu$.  The homology class of a knot in $L(p,q)$ is given as its multiple of $\mu$.

\begin{thm}\label{thm:mainhomology}
The lens spaces $L(m^2,q)$ of Theorem~\ref{thm:main} may be obtained by longitudinal surgeries on the following simple knots $K=K(m^2,q,k)$  listed below.
\begin{enumerate}
\item $q=md+1$  such that $\gcd(m,d)=1$ and either 
\begin{itemize}
\item $k=\pm m$ so that $K$ is the dual to a {\sc bgi} knot or 
\item $k = \pm dm$ so that $K$ is the dual to a {\sc gofk} knot;
\end{itemize}
\item  $q= md+1$  such that $\gcd(m,d)=2$ and
\begin{itemize}
\item $k=\pm m $ so that $K$ is the dual to  a {\sc bgii} knot;
\end{itemize}
\item  $q=d(m-1)$  such that $d$ is odd and divides $m-1$ and either 
\begin{itemize}
\item $k=\pm dm $ so that $K$ is the dual to a {\sc bgiii} knot,
\item $k=\pm m $ so that $K$ is   the dual to a {\sc bgv} knot, or
\item $k =\pm 2m  = \pm 4m $ so that $K$ is  the dual to a {\sc spor} knot if $m=1-2d$; or  
\end{itemize}
\item $q= d(m-1)$ such that $d$ divides $2m+1$ and
\begin{itemize}
\item $k=\pm m $ or $k=\pm dm $ so that $K$ (in each case) is  the dual to a {\sc bgiv} knot.
\end{itemize}
\end{enumerate}
\end{thm}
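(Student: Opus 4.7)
The plan is to combine Theorem~\ref{thm:simplewave} with a family-by-family homology computation. Since each knot in \bgi--\bgv, \gofk, and \spor\ is doubly primitive by construction, Theorem~\ref{thm:simplewave}(2) guarantees that its dual after the longitudinal $S^1\times S^2$ surgery is a simple knot in the lens space $L(m^2,q)$. A simple knot $K(p,q,k)$ is determined up to isotopy by its homology class $k\mu \in H_1(L(p,q))$ (up to the standard $\pm$ and Heegaard-swap symmetries), so the theorem reduces to identifying the residue $k \pmod{m^2}$ for each family, with orientations fixed by the convention $\mu' = q\mu$.

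The first step is, for each of the seven named families, to produce a surgery presentation of the pair $(S^1 \times S^2, K)$ built from the same linear chain link that also presents $L(m^2,q)$. For \bgi--\bgv\ these descriptions come directly from Rasmussen's embedding of a solid torus containing a Berge--Gabai knot into a Heegaard solid torus of $S^1 \times S^2$ (\cite[Section 1.5]{greene}), combined with the tangle pictures that underlie our proof of Theorem~\ref{thm:main}. For \gofk\ one uses the standard surgery diagram of a genus one fibered knot in $S^1 \times S^2$ together with the slope on the fiber that records how $K$ sits on it. For \spor\ the diagram is extracted from the tangle picture underlying the proof of Theorem~\ref{thm:conj}, and may be cross-checked against the lattice-embedding discussion of Section~\ref{sec:embeddings}.

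Next I would perform the longitudinal surgery on $K$ in each diagram. After a few handle slides the picture becomes a surgery description of $L(m^2,q)$ in which the dual knot $K^*$ appears as a single unknotted component, visibly at most $1$-bridge in the Heegaard torus. Reading $[K^*]$ off the linking matrix of the chain and identifying the preferred meridional generator $\mu$ then expresses $[K^*]$ as an integer multiple of $\mu$ modulo $m^2$: the answers for the \bgi--\bgv\ and \gofk\ entries work out to the predicted $\pm m$ and $\pm dm$. The \spor\ computation outputs $\pm 2m$; the coincidence with $\pm 4m$ under the arithmetic hypothesis $m = 1-2d$ then follows from the simple-knot identification $K(p,q,k) \cong K(p,q,\pm q^{\pm 1} k)$ induced by swapping the Heegaard solid tori, applied in the specific lens space $L(m^2,d(m-1))$.

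The main obstacle I anticipate is bookkeeping rather than novelty. The \spor\ case is the most delicate, both because the surgery description is the least symmetric of the three types and because the final identification of $\pm 2m$ with $\pm 4m$ requires exploiting the full range of symmetries of simple knots in that specific arithmetic regime. Two conventions must also be tracked throughout: the choice of generator of $H_1(L(m^2,q))$ (which fixes the sign of $k$ under $\mu' = q\mu$) and the orientation of each $K$ induced by its doubly primitive embedding. Once these are pinned down in the \bgi\ case, the other Berge--Gabai families and \gofk\ follow by the same routine calculation, so the real work is concentrated in \spor.
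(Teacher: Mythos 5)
Your proposal matches the paper's proof in essentially all respects: the paper likewise starts from linear chain link surgery descriptions of the lens space duals (its Figure~\ref{fig:chainsurgery}, derived from the tangle pictures and cross-checked against the lattice embeddings), computes the homology class of each dual from its linking with the chain components via the continued-fraction recursion $\mu_i = P_{i-1}\mu_1$ (equivalently, your linking-matrix computation), and converts between the two Heegaard generators using $\mu_1 = q\mu_n$, with Theorem~\ref{thm:simplewave} supplying the identification of the dual as the simple knot in that class. Even your treatment of the \spor\ case is what happens in the paper, where the class comes out as $4m\mu_1 = -2m\mu_6$, the two representatives arising from the two core curves exactly as you predict.
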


Together Conjectures~\ref{conj:main} and \ref{conj:dp} assert that Theorem~\ref{thm:mainhomology} gives a complete list of knots in lens spaces with a longitudinal $S^{1} \times S^{2}$ surgery.   The possible lens spaces containing such knots are given in Theorem~\ref{thm:main}, but the homology classes of these knots have not yet been determined.  

\begin{conj}\label{conj:homologyclasses}
If a knot in a lens space $L$ represents the homology class $\kappa \in H_1(L)$ and admits a longitudinal surgery to $S^1 \times S^2$ then, up to homeomorphism, $L=L(m^2,q)$ and $\kappa=k\mu$ are as in some case of Theorem~\ref{thm:mainhomology}.
\end{conj}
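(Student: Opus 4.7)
The plan is to implement the $S^1 \times S^2$ analogue of Greene's resolution \cite{greene} of the Berge realization problem. Start from a knot $K \subset L$ that admits a longitudinal $S^1 \times S^2$ surgery. The trace of the surgery is a $2$-handle cobordism $W$ from $L$ to $S^1 \times S^2$; capping the $S^1 \times S^2$ end with $S^1 \times D^3$ produces a smooth rational homology $4$-ball $X$ with $\partial X = L$, together with a distinguished class in $H_2(X, \partial X;\Z)$ coming from the core of the $2$-handle whose image in $H_1(L;\Z)$ is precisely $\kappa$. By Theorem~\ref{thm:main} the underlying lens space is already pinned down (up to homeomorphism) to $L(m^2,q)$ for one of the four listed forms of $q$, so the remaining task is to constrain the allowed values of $k$ in $\kappa = k\mu$.

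The first step is to translate the existence of $X$ into combinatorial data on $\mathrm{Spin}^{c}(L) \cong H_1(L)$ via Ozsv\'ath--Szab\'o correction terms. A rational homology ball bounded by $L$ forces the $d$-invariants of $L$ to vanish on an affine subgroup $\calL_X \subset \mathrm{Spin}^{c}(L)$ of order $\sqrt{|H_1(L)|}$, namely the image of $\mathrm{Spin}^{c}(X) \to \mathrm{Spin}^{c}(L)$. Lisca's uniqueness of lattice embeddings (Lemma~\ref{lem:uniqueembedding}), combined with the recursive formula for the $d$-invariants of lens spaces, determines $\calL_X$ essentially uniquely for each $L$ in Theorem~\ref{thm:main}.

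The second step is to read $\calL_X$ off from the knot data directly. The $2$-handle attaching region in $W$ expresses $\calL_X$ as an explicit coset in $H_1(L)$ built from $\kappa$, so matching this description against the $\calL_X$ singled out by Lisca restricts $k$ to a short arithmetic list. Verifying case by case across the four families that this list is precisely the one of Theorem~\ref{thm:mainhomology}, and that the simple-knot $d$-invariants (computable via the Ozsv\'ath--Szab\'o recursion and Theorem~\ref{thm:simplewave}) agree with those of $L$, is then an unpacking of known formulae.

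The main obstacle is establishing the required rigidity of $\calL_X$. Greene's argument in \cite{greene} used the changemaker lattice characterization of integer surgeries into $S^3$; the present setting demands an analogous classification of $2$-handle traces sitting inside rational homology balls rather than $B^4$. Moreover, the coexistence of two distinct dual-knot families ({\sc bg} and {\sc gofk}) producing the same lens space in family (1), together with the anomalous {\sc spor} contribution in family (3) at $m = 1-2d$, shows that any such characterization cannot be strictly unique but must instead sort the classes into the short explicit list of Theorem~\ref{thm:mainhomology}. Developing this refined changemaker-type theorem, and carrying out the matching in the overlap cases, is where the principal work lies.
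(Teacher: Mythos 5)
This statement is a \emph{conjecture} in the paper, not a theorem: the authors explicitly leave it open, note only that Cebanu has settled the first two families of Theorem~\ref{thm:mainhomology} by knot Floer homology (fiberedness) methods, and in Remark~\ref{rem:embeddings} they caution that Lisca's lattice embeddings ``do not appear to produce information about the classification of homology classes.'' So there is no proof in the paper to compare yours against, and your proposal does not close the gap either --- you say so yourself when you defer the ``refined changemaker-type theorem'' to future work. That deferred step is not a technicality; it is the entire content of the conjecture.

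Two concrete problems with the intermediate steps. First, the $d$-invariant/metabolizer obstruction is far too coarse. The class $\kappa$ of the dual knot is the boundary of the cocore of the $2$-handle, hence lies in (and generates) the kernel $\calL_X$ of $H_1(L)\to H_1(X)$, a cyclic subgroup of order $m$ in $H_1(L)\cong\Z/m^2$. Even if $\calL_X$ is determined uniquely, this only forces $k\equiv cm$ with $\gcd(c,m)=1$ (in case (1), say), a set of roughly $\varphi(m)$ residues, whereas Theorem~\ref{thm:mainhomology} asserts the four values $\pm m,\pm dm$. So ``matching against the $\calL_X$ singled out by Lisca'' cannot by itself restrict $k$ to the stated list. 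Second, the mechanism that made Greene's argument work has no evident analogue here: in the $S^3$ case the trace of the surgery contributes an extra second homology class to the closed negative definite $4$--manifold, and the changemaker condition on the resulting vector is what records $\kappa$. In the present setting the closed manifold is $P(p,q)\cup -X$ with $X$ a rational homology ball, so $b_2$ is unchanged and the Donaldson/lattice-embedding data carries no vector remembering $\kappa$ at all --- this is precisely the obstruction the authors point to in Remark~\ref{rem:embeddings}. Until a substitute source of rigidity is identified (Cebanu's partial results use fiberedness of the simple knots and planarity of fibers, not lattices), the proposal remains a plausible research outline rather than a proof.
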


\begin{remark}
We are informed that Cebanu has established this conjecture for $L$ as in the first two cases of Theorem~\ref{thm:mainhomology}, i.e.\ when $L \cong L(m^2,md+1)$ and $\gcd(m,d)=1,2$ \cite{cebanu}.
\end{remark}

\begin{remark}\label{rem:embeddings}
While Greene's work on lattice embeddings produced a classification of the homology classes of knots in lens spaces with a longitudinal $S^3$ surgery \cite{greene},  Lisca's work on lattice embeddings does not appear to produce information about the classification of homology classes of knots in lens spaces with longitudinal $S^1 \times S^2$ surgeries \cite{lisca}.  Nevertheless we examine a manner in which Lisca's lattice embeddings suggest knots in lens spaces with such surgeries in section~\ref{sec:embeddings}.
\end{remark}

\begin{remark}
In \cite{koyabel}, the authors classify the strongly invertible knots in $L((2n-1)^2, 2n)$ with longitudinal $S^1 \times S^2$ surgeries.  These belong to the first case of Theorem~\ref{thm:mainhomology} with $m=2n-1$ and $d=1$.  The knots are
 {\sc bgi} knots and may also be realized as {\sc gofk} knots.
\end{remark}

\begin{thm}\label{thm:reduction}
Conjecture~\ref{conj:homologyclasses} implies Conjecture~\ref{conj:dp}.

That is, confirming the list of homology classes of knots in lens spaces admitting a longitudinal $S^1 \times S^2$ surgery will confirm that the families {\sc bg}, {\sc gofk}, {\sc spor} together constitute all the doubly primitive knots in $S^1 \times S^2$.
\end{thm}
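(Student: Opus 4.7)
The plan is to transfer the question to the surgery-dual side, apply Conjecture~\ref{conj:homologyclasses} to pin down that dual, and then invert the surgery to identify the original knot. Let $K\subset S^1\times S^2$ be any doubly primitive knot, and let the doubly primitive (hence longitudinal lens space) surgery on $K$ yield a lens space $L$ with surgery dual $K^*\subset L$. A doubly primitive presentation of $K$ converts, after the surgery, into a genus-$1$ presentation of $K^*$, so $K^*$ is a $(1,1)$--knot in $L$. Theorem~\ref{thm:simplewave}(2) then identifies $K^*$ as a simple knot $K(m^2,q,k)$ for some triple. Because the reverse surgery on $K^*$ recovers $S^1\times S^2$ and is longitudinal on $K^*$ in the sense of Theorem~\ref{thm:main}, Conjecture~\ref{conj:homologyclasses} applies and forces $(m^2,q,k)$ onto the list appearing in Theorem~\ref{thm:mainhomology}.

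Once the triple is on that list, I would invoke the uniqueness of the oriented simple knot representing a given torsion homology class in a lens space; this determines $K^*\subset L$ up to isotopy. Theorem~\ref{thm:mainhomology} exhibits every such simple knot as the longitudinal surgery dual of an explicit knot $K'\subset S^1\times S^2$ drawn from the families \bgi--\bgv, \gofk, and \spor. Consequently both $K$ and $K'$ have exteriors homeomorphic to the exterior of $K^*$ in $L$, and both are recovered from that common exterior by Dehn filling along the rational longitude of $K^*$. Since Dehn filling along a prescribed slope yields a unique pair (closed manifold, core), $K$ and $K'$ are isotopic in $S^1\times S^2$, so $K$ lies in one of the three families named in Conjecture~\ref{conj:dp}.

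The delicate part of the argument is reconciling the two notions of ``longitudinal'' surgery invoked above. One must verify that the doubly primitive (Heegaard surface) slope on $K\subset S^1\times S^2$ corresponds under surgery duality to the rational longitude of $K^*\subset L$, so that the hypothesis of Conjecture~\ref{conj:homologyclasses} genuinely applies to $K^*$, and so that reversing the surgery through that rational longitude recovers $K$ rather than some other knot sharing the exterior of $K^*$. Once this slope correspondence is pinned down, the remainder is bookkeeping: matching the four-case conclusion of Conjecture~\ref{conj:homologyclasses} against the four-case list in Theorem~\ref{thm:mainhomology}, and handling the $\pm k$ and $L(p,q)\cong L(p,q')$ ambiguities already accommodated in those statements.
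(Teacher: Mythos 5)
Your proposal is correct and follows essentially the same route as the paper, whose proof is the one-line observation that the result follows from Theorem~\ref{thm:simplewave} together with the uniqueness of the simple knot in each homology class; you have simply spelled out the intermediate steps (passing to the $(1,1)$ surgery dual, applying Conjecture~\ref{conj:homologyclasses}, and inverting the surgery through the rational longitude). The slope-matching issue you flag is handled implicitly in the paper by the fact that the only slope on the dual knot's exterior that can yield $S^1\times S^2$ is the rational longitude, so recovering $K$ from $K^*$ is unambiguous.
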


\begin{proof}
Because there is a unique simple knot for each homology class in a lens space, this theorem follows from Theorem~\ref{thm:simplewave}.
\end{proof}

\begin{remark}
Indeed, Conjecture~\ref{conj:main} may be rephrased as saying if a knot $K$ in a lens space $L$ admits a longitudinal $S^1 \times S^2$ surgery, then up to homeomorphism $L=L(p,q)$ and $[K]=\kappa = k\mu$ in some case of  Theorem~\ref{thm:mainhomology} and moreover $K = K(p,q,k)$.
\end{remark}

\subsection{Fibered knots and spherical braids}

Ni shows that knots in $S^3$ with a lens space surgery have fibered exterior \cite{ni}.  One expects the same to be true for any knot in $S^1 \times S^2$ with a lens space surgery.  Using knot Floer Homology, Cebanu shows this is indeed the case.  

\begin{thm}[Cebanu \cite{cebanu}]\label{thm:sphericalbraid2}
A knot in $S^1 \times S^2$ with a longitudinal lens space surgery has fibered exterior.
\end{thm}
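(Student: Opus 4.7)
The plan is to adapt Ni's argument for knots in $S^{3}$ with lens space surgeries to this setting, using the extensions of the knot Floer homology machinery to rationally null-homologous knots due to Ozsv\'{a}th--Szab\'{o}.

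Let $K \subset S^{1}\times S^{2}$ have a longitudinal surgery to a lens space $L$, and let $X$ denote their common exterior. The dual knot $K^{*} \subset L$ is rationally null-homologous since $H_{1}(L)$ is finite, so the knot Floer package $\widehat{HFK}(L,K^{*})$ is defined and splits over $\mathrm{Spin}^{c}$ structures on $L$. First I would apply the integer surgery formula, or equivalently the surgery exact triangle combined with a large surgery formula, to relate $\widehat{HFK}(L,K^{*})$ to $\widehat{HF}$ of the two relevant fillings of $X$: the $S^{1}\times S^{2}$ filling with $\mathrm{rk}\,\widehat{HF} = 2$ and the lens space filling, which is an L-space with $\mathrm{rk}\,\widehat{HF} = |H_{1}(L)|$. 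These contributions numerically constrain the ranks of $\widehat{HFK}(L,K^{*})$ grading by grading.

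Next I would show that in each $\mathrm{Spin}^{c}$ structure on $L$ the extreme nontrivial Alexander grading of $\widehat{HFK}(L,K^{*})$ has rank exactly $1$. This is the analog of the ``monic at the top'' condition Ni exploits in the $S^{3}$ setting, and here it should follow from the Floer-simplicity of $L$ together with the minimal Heegaard Floer rank exhibited by the $S^{1} \times S^{2}$ filling relative to its $b_{1}$. Finally I would invoke the rationally null-homologous extension of the Ghiggini--Ni fiberedness criterion, due to Ai--Ni, to conclude that $K^{*}$ has fibered exterior, and hence so does $K$.

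The main technical obstacle is the $\mathrm{Spin}^{c}$-structure bookkeeping in the rationally null-homologous setting. Relative Alexander gradings take values in $\mathbb{Q}$ and the identification of the top grading across $\mathrm{Spin}^{c}$ structures requires careful use of the relative first Chern class; one must establish the monic condition in every $\mathrm{Spin}^{c}$ stratum rather than merely in total. A secondary concern is verifying that the rationally null-homologous Ghiggini--Ni criterion applies in the precise form produced by the $S^{1}\times S^{2}$ surgery constraints, rather than a slightly different normalization that might require additional work to match.
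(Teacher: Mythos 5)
The paper does not actually prove this theorem: it is quoted from Cebanu \cite{cebanu}, with only the remark that his argument uses knot Floer homology. What the paper itself establishes is the weaker statement that the explicit families {\sc bg}, {\sc gofk}, and {\sc spor} have fibered exterior, and it does so by an entirely different, elementary route: Theorem~\ref{thm:sphericalbraid} exhibits each of these knots as a closed spherical braid, and Lemma~\ref{lem:sphericalbraid} observes that for a non-null-homologous knot in $S^1\times S^2$ being a spherical braid is equivalent to having fibered exterior (the boundary of a fiber must consist of coherently oriented meridians, so the meridional filling forces the fibers to be planar). Your outline, by contrast, works with the surgery-dual knot $K^{*}$ in the lens space and runs the Heegaard Floer fiberedness detection for rationally null-homologous knots; this is presumably close in spirit to Cebanu's actual argument, and it has the decisive advantage of applying to an \emph{arbitrary} knot with a longitudinal lens space surgery rather than only to the conjecturally complete list of doubly primitive ones, which is exactly why the paper must cite Cebanu rather than rely on its braid argument. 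The trade-off is that your route is only a plan: the two substantive steps --- establishing the rank-one ``monic at the top'' condition in each $\mathrm{Spin}^{c}$ structure from the constraints $\mathrm{rk}\,\widehat{HF}(S^{1}\times S^{2})=2$ and $L$ being an L-space, and the verification that the fiberedness criterion applies in the rationally null-homologous normalization --- are named but not carried out, and the criterion you attribute to Ai--Ni is really due to Ni's work on fibredness and the Thurston norm (as extended to rationally null-homologous knots). As a reconstruction of the cited argument your sketch is reasonable, but it is not the paper's proof and is not yet self-contained.
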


Prior to learning of Cebanu's results, we had confirmed this for all our knots by showing they are spherical braids.  This is done in section~\ref{sec:braids}.  
A link in $S^1 \times S^2$ is a {\em (closed) spherical braid} if it is transverse to $\{\theta\} \times S^2$ for each $\theta \in S^1$.

\begin{thm}\label{thm:sphericalbraid}
In $S^1 \times S^2$, the knots in families {\sc bg}, {\sc gofk}, and {\sc spor} are all isotopic to spherical braids. 
\end{thm}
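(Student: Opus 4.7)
The plan is to handle each of the three families separately, in each case producing an explicit isotopy placing the knot transverse to every sphere $\{\theta\} \times S^2$. Fix a Heegaard solid torus $V = S^1 \times D^2 \subset S^1 \times S^2$ so that each spherical fiber $\{\theta\} \times S^2$ meets $V$ in the meridian disk $\{\theta\} \times D^2$; then a curve contained in $V$ is a spherical braid in $S^1 \times S^2$ precisely when it is a braid in $V$ with respect to these meridian disks, while a curve that exits $V$ may still be a spherical braid provided it remains transverse to each fiber after crossing into the complementary solid torus $V'$.

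For the family {\sc bg}, the Berge--Gabai knots lie in $V$ by construction. The $0$-bridge braid members are tautologically braids in $V$ and hence spherical braids. For the $1$-bridge braid members the knot carries a single local extremum in the $S^1$-direction inside $V$; I would isotope the bridge arc out through $\partial V$ and across $V'$, using the explicit tangle descriptions in \cite{bakerbuck} to verify that the isotopy can be arranged so as to leave the knot transverse to every spherical fiber. The resulting knot is then a spherical braid in $S^1 \times S^2$.

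For the family \gofk, the knots are simple closed curves embedded on the once-punctured torus fiber $T$ of a genus one fibered knot $L \subset S^1 \times S^2$. I would first exhibit an explicit position for $L$ together with an open book decomposition of $S^1 \times S^2$ with binding $L$ and page $T$, chosen so that the page is transverse to every spherical fiber except at finitely many critical points. Concretely, I aim to realize $L$ as the closure of a small-index spherical braid, and then $T$ as a Bennequin-like surface swept out by the braid strands together with a core disk. Any simple closed curve on $T$ then inherits a presentation as a spherical braid after a small isotopy pushing it off the critical points of the sphere fibration restricted to $T$.

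For the family \spor, whose members are constructed explicitly via tangle substitution earlier in the paper, a spherical braid presentation can be read off directly from the tangle diagrams by inspection, and it suffices to exhibit one per parameter. The main obstacle will be the \gofk case, since it requires reconciling two distinct fibration structures on $S^1 \times S^2$ --- the sphere fibration used to define spherical braids and the once-punctured torus fibration of the complement of $L$ --- and showing that every simple closed curve on $T$, independent of its slope, can be simultaneously braided using a single fixed position of $L$ and $T$.
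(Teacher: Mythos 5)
There is a genuine gap, concentrated in the \gofk\ case, which you correctly identify as the crux but for which your proposed mechanism cannot work. Being a spherical braid is not a generic condition that can be achieved by ``a small isotopy pushing \,[the curve]\, off the critical points'': if $K$ is transverse to every sphere $\{\theta\}\times S^2$, the projection $K\to S^1$ is a covering map, so $[K]$ is a nonzero multiple of $[S^1\times \ast]$ in $H_1(S^1\times S^2)$. In particular the essential curve on the once-punctured torus fiber $T$ isotopic to the core of the plumbed Hopf band is null-homologous and can never be braided, so any argument that treats all slopes on $T$ uniformly (as yours explicitly aims to: ``independent of its slope'') is doomed. More concretely, a curve of slope $(p,q)$ on $T$ has strands running in both directions around the Hopf-band handle, and these opposite-running strands produce essential, not removable, tangencies with the spheres near the Hopf band; combing them to run coherently requires a \emph{global} isotopy that exploits the ambient $S^1\times S^2$. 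This is exactly what the paper does: it exhibits an isotopy of the fiber realizing the monodromy via a ``lightbulb trick'' move (Figures~\ref{fig:isotopiesofGOFcurves} and \ref{fig:hopfjump}), uses it to carry an arbitrary essential curve onto a fixed train track, and then flattens the fiber so that everything carried by that train track is braided. Your Bennequin-surface picture does not supply a substitute for this step.

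The other two cases are less serious but also off the mark. For {\sc bg} you have misread ``$1$--bridge braid'': such a knot is by definition already a closed braid in the solid torus (transverse to every meridian disk), with the ``bridge'' measured against $\bdry V$, so it has no local extremum in the $S^1$--direction and no excursion into $V'$ is needed; the paper's proof here is a single sentence, since meridian disks of a Heegaard solid torus sit inside the spherical fibers. Your proposed detour through $V'$ is unnecessary and, as written, unverified. For \spor, the tangle diagrams live in the quotient $S^3$ (as bandings of two-bridge links), so a spherical braid presentation cannot simply be ``read off by inspection'' there; one must work upstairs in $S^1\times S^2$. The paper instead takes an explicit doubly primitive presentation of the \spor\ knots in terms of Dehn twists along an auxiliary curve and $0$--surgery on another, and checks that both signs of these Dehn twists keep the knot braided about the surgery curve, hence a spherical braid after surgery.
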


Braiding characterizes fiberedness for non-null homologous knots in $S^1\times S^2$.

\begin{lemma}\label{lem:sphericalbraid} 
A non-null homologous knot in $S^1 \times S^2$ has fibered exterior if and only if it is isotopic to a spherical braid.
\end{lemma}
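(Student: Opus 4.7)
The plan is to treat the two directions separately, with the converse being the substantive one. The forward direction is essentially immediate: if $K$ is a spherical braid, the projection $p : S^1 \times S^2 \to S^1$ is submersive along $K$, so by Ehresmann's theorem $p|_{E(K)}$ is a locally trivial fibration whose fiber is an $|n|$-punctured $2$-sphere, where $n = [K] \in H_1(S^1 \times S^2) = \Z$.

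For the converse, suppose $E(K) \to S^1$ is a fibration with connected fiber $F$. I would first pin down the boundary behavior of $F$ on $T = \partial N(K)$. From the long exact sequence of $(S^1 \times S^2, E(K))$ with excision, $H_1(E(K)) \cong \Z \oplus \Z/n$, with torsion generated by $\mu$ of order $n$, and with a pushoff $\lambda$ of $K$ satisfying $\lambda = n\ell$ modulo torsion for $\ell$ a free generator. A primitive fibration class $c \in H^1(E(K);\Z) \cong \Z$ thus sends $\mu \mapsto 0$ and $\lambda \mapsto \pm n$, so $c|_T = \pm n\, \lambda^* \in H^1(T)$ and $\partial F$ consists of exactly $|n|$ parallel meridians of $K$.

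Next I would extend the fibration across $N(K) = S^1 \times D^2$ by the map $(\theta, z) \mapsto n\theta$, whose fiber is $|n|$ meridional disks cyclically permuted by monodromy and whose restriction to $T$ agrees with the given fibration. The result is a fibration $\pi : S^1 \times S^2 \to S^1$ with closed connected fiber $\hat F = F \cup (|n|~\text{disks})$. A short long-exact-sequence argument in homotopy---since $\pi_1(\hat F)$ injects into $\pi_1(S^1 \times S^2) = \Z$ and no closed orientable surface has $\pi_1 \cong \Z$---forces $\hat F = S^2$; hence $F$ is planar with $|n|$ holes. Finally, any $S^2$-fibration of $S^1 \times S^2$ over $S^1$ is ambient isotopic to the standard projection $p$, under which $K$ is carried to a knot transverse to each $\{\theta\}\times S^2$, i.e.\ to a spherical braid.

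I expect the main obstacle to be the homology bookkeeping in the first step of the converse: the crucial observation is that $\mu$ has order exactly $n$ in $H_1(E(K))$, which forces the primitive fibration class to evaluate to $\pm n$ rather than $\pm 1$ on $\lambda$, producing $|n|$ meridional boundary components of $F$ rather than a single longitudinal one. Without this distinction one reaches an apparent contradiction with the non-null-homology hypothesis. The remaining extension and identification steps then follow standard patterns.
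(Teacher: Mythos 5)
Your proposal is correct and follows essentially the same route as the paper: both show that the fiber of the fibered exterior meets $\partial N(K)$ in coherently oriented meridians (you via the torsion $\Z/n$ summand of $H_1(E(K))$ and evaluation of the fibration class, the paper via the kernel of the map from $H_1(\partial X(K))$ being generated by a multiple of the meridian), then extend the fibration over the trivial filling and conclude the closed fibers are spheres, so that $K$ is a spherical braid. Your write-up merely spells out the homological bookkeeping, the $\pi_1$ argument for $\hat F = S^2$, and the identification of the resulting $S^2$-fibration with the standard projection in more detail than the paper does.
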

\begin{proof}
If $K$ is a spherical braid, then the punctured spheres $(\{\theta\} \times S^2) - N(K)$ for $\theta \in S^1$ fiber the exterior of $K$.   Let $X(K) = S^1 \times S^2 - N(K)$ denote the exterior of a knot $K$ in $S^1 \times S^2$.  If $K$ is a non-null homologous in $S^1 \times S^2$, then the kernel of the map  $H_1(\bdry X(K) ) \to H_1(X(K), \bdry X(K))$ induced by inclusion is generated by a multiple of the meridian of $K$.  Hence if $X(K)$ is fibered, then the boundary of a fiber is a collection of coherently oriented meridional curves.  Therefore the trivial (meridional) filling of $X(K)$ which returns $S^1 \times S^2$ must also produce a fibration over $S^1$ by closed surfaces, the capped off fibers of $X(K)$.  Hence the fibers of $X(K)$ must be punctured spheres, and so $K$ is isotopic to a spherical braid.
\end{proof}

\begin{remark}
Together Theorem~\ref{thm:sphericalbraid2} and Lemma~\ref{lem:sphericalbraid} suggest the study of surgery on spherical braids.

One may care to compare these results with Gabai's resolution of Property R \cite{gabai:fato3mIII}: The only knot in $S^1 \times S^2$ with a surgery yielding $S^3$ is a fiber $S^1 \times \ast$.
\end{remark}

\subsection{Geometries of knots and lens space surgeries}
For completeness, here we address the classification of lens space Dehn surgeries on non-hyperbolic knots in $S^1 \times S^2$.

We say a knot $K$ in a $3$--manifold  $M$ is either spherical, toroidal, Seifert fibered, or hyperbolic if its exterior $M-N(K)$ contains an essential embedded sphere, contains an essential embedded torus, admits a Seifert fibration, or is hyperbolic respectively.
By Geometrization for Haken manifolds, a knot $K$ in $S^1 \times S^2$ is (at least) one of these.  By the Cyclic Surgery Theorem \cite{cgls}, if a non-trivial knot $K$ admits a non-trivial lens space surgery, then either the surgery is longitudinal or $K$ is Seifert fibered.

If $K$ is spherical, then one may find a separating essential sphere in the exterior of $K$.  Since $S^1 \times S^2$ is irreducible, $K$ is contained in a ball.  Therefore $K$ only admits a lens space surgery (in fact an $S^1 \times S^2$ surgery) if $K$ is the trivial knot, \cite{gabai:fato3mIII, gl}.

If $K$ is Seifert fibered then $K$ is a torus knot.  This follows from the classification of generalized Seifert fibrations of $S^1 \times S^2$, \cite{JankinsNeumann}.  Note that the exceptional generalized fiber of $M(-1;(0,1))$ is a regular fiber of $M(0;(2,1),(2,-1))$, and its exterior is homeomorphic to both the twisted circle bundle over the \mobius band and the twisted interval bundle over the Klein bottle.  For relatively prime integers $p,q$ with $p\geq 0$, we define a $(p,q)$--torus knot $T_{p,q}$ in $S^1 \times S^2$ to be a regular fiber of the generalized Seifert fibration $M(0;(p,q),(p,-q))$. (The exceptional fibers may be identified with $S^1 \times \mbox{ \sc n}$ and $S^1 \times \mbox{ \sc s}$ for antipodal points $ \mbox{ \sc n},  \mbox{ \sc s} \in S^2$.)
Equivalently, we may regard $T_{p,q}$ as a curve on $\bdry N(S^1 \times \ast)$, $\ast \in S^2$, that is homologous to $q \mu + p \lambda \in H_1(\bdry N(S^1 \times \ast))$ for meridian-longitude classes $\mu, \lambda$ and an appropriate choice of orientation on $T_{p,q}$.  Observe $[T_{p,q}] = p[S^1 \times \ast] \in H_1(S^1 \times S^2)$ and $T_{p,q} = T_{p, q+Np}$ for any integer $N$.
Following \cite{moser, gordonsatellite} (though note that on the boundary of a solid torus, a $(p,q)$ curve for them is a $(q,p)$ curve for us)  any non-trivial lens space surgery on a $(p,q)$--torus knot with $p \geq 2$ has surgery slope $1/n$, taken with respect to the framing induced by the Heegaard torus,  and yields the lens space $L(np^2,npq+1)$.

If $K$ is toroidal and admits a non-trivial lens space surgery then the proof in \cite{bleilerlitherland} applies basically unaltered (because Seifert fibered knots in $S^1 \times S^2$ are torus knots and lens spaces are atoroidal) to show $K$ must be a $(2,\pm1)$--cable of a torus knot, where the cable is taken with respect to the framing on the torus knot induced by the Heegaard torus.   If $K$ is the $(2,\pm1)$--cable of the $(p,q)$--torus knot, then $\pm1$ surgery on $K$ with respect to its framing as a cable is equivalent to $\pm1/4$ surgery on the $(p,q)$--torus knot and thus yields $L(4p^2,4pq\pm1)$ or its mirror.

Hence we have:
\begin{thm}
A non-hyperbolic knot in $S^1 \times S^2$ with a non-trivial lens space surgery is either a $(p,q)$--torus knot or a $(2,\pm1)$--cable of a $(p,q)$--torus knot.
\end{thm}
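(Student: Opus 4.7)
The plan is to invoke geometrization for Haken $3$--manifolds to divide a non-hyperbolic knot $K \subset S^1 \times S^2$ admitting a non-trivial lens space surgery into three cases—spherical, Seifert fibered, and toroidal (with no essential sphere)—and to handle each by combining the geometric structure on the exterior $X(K)$ with the Cyclic Surgery Theorem \cite{cgls} to pin down both $K$ and the surgery slope.

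In the spherical case, since every separating $2$--sphere in $S^1 \times S^2$ bounds a ball, an essential sphere in $X(K)$ must cobound a ball containing $K$, so $K$ lies in a $3$--ball $B \subset S^1 \times S^2$. Non-trivial surgery on such a $K$ yields $M_K \#(S^1 \times S^2)$, where $M_K$ is the result of surgery on $K$ viewed as a knot in $S^3 \supset B$; primeness of lens spaces forces $M_K = S^3$, and then Gabai's resolution of Property R together with the Gordon--Luecke knot complement theorem \cite{gabai:fato3mIII,gl} force $K$ to be the unknot, which is a torus knot $T_{1,q}$ in $S^1 \times S^2$. In the Seifert fibered case, I would extend a Seifert fibration of $X(K)$ across $N(K)$ with $K$ a regular fiber to obtain a generalized Seifert fibration of $S^1 \times S^2$; the Jankins--Neumann classification \cite{JankinsNeumann} of such fibrations then forces $K$ to be a $(p,q)$--torus knot in the sense described in the excerpt.

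The toroidal case is where the main work lies. Since $K$ is not Seifert fibered, the Cyclic Surgery Theorem implies the surgery slope is longitudinal. I would then adapt the argument of Bleiler--Litherland \cite{bleilerlitherland} for toroidal knots in $S^3$ with lens space surgeries. The essential ingredients of that argument are (i) that lens spaces are atoroidal and (ii) that Seifert fibered knots in the ambient manifold are torus knots; both remain valid in $S^1 \times S^2$, the latter by the Seifert fibered case just handled. The main obstacle is checking that the subsequent analysis of the JSJ torus in $X(K)$—in particular, identifying it as the boundary of a cable space and pinning the cabling slope to $\pm 1$—does not rely on features specific to $S^3$; granting this, the argument identifies $K$ as a $(2,\pm 1)$--cable of a torus knot, completing the classification.
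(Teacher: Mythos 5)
Your proposal follows essentially the same route as the paper: the spherical/Seifert fibered/toroidal/hyperbolic trichotomy from geometrization, the spherical case via Gabai and Gordon--Luecke, the Seifert fibered case via the Jankins--Neumann classification of generalized Seifert fibrations of $S^1 \times S^2$, and the toroidal case by running the Bleiler--Litherland argument with the same two substitute inputs (lens spaces are atoroidal; Seifert fibered knots here are torus knots). The only quibble is notational: under the paper's conventions the unknot contained in a ball is $T_{0,1}$ rather than $T_{1,q}$ (the latter is the Heegaard circle $S^1 \times \ast$), but this does not affect the conclusion.
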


Because the $(2,1)$--torus knot in $S^1 \times S^2$ contains an essential Klein bottle in its exterior, it is toroidal.
\begin{cor}\
\begin{itemize}
\item The smallest order lens space obtained by surgery on a toroidal knot in $S^1 \times S^2$ is homeomorphic to $L(4,1)$.  
The surgery dual is the simple knot $K(4,1,2)$.
\item The smallest order lens space obtained by surgery on a non-torus, toroidal knot in $S^1 \times S^2$ is homeomorphic to $L(16,9)$.
The surgery dual is the (unoriented) simple knot $K(16,9,4)$.
\end{itemize}
\end{cor}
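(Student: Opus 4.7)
The plan is to apply the classification of non-hyperbolic knots with lens space surgeries from the preceding theorem, restrict to those whose exteriors contain essential tori, and compute the orders of the resulting lens spaces. First, I would enumerate the candidate toroidal knots: among $(p,q)$-torus knots, the exterior of $T_{p,q}$ is a Seifert fibered space over the base orbifold $D^2(p,p)$. Since $T_{p,q} = T_{p, q+Np}$, all $T_{2,q}$ with $q$ odd are isotopic to $T_{2,1}$, whose exterior is the twisted $I$-bundle over the Klein bottle (noted immediately above the corollary) and is therefore toroidal. For $p \geq 3$ the base orbifold $D^2(p,p)$ is hyperbolic and the resulting small Seifert fibered exterior contains no essential tori. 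So $T_{2,1}$ is the only toroidal torus knot, while every $(2,\pm 1)$-cable of a $(p,q)$-torus knot is automatically toroidal, with essential torus $\partial N(T_{p,q})$.

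Second, I would compare the orders of the candidate surgeries. Using the surgery formulas from the preceding discussion, $1/n$-surgery on $T_{p,q}$ with $p \geq 2$ yields $L(np^2, npq+1)$, of order $np^2 \geq 4$, with equality at $(p,n)=(2,1)$ giving $L(4,3) \cong L(4,1)$; while $\pm 1$-surgery on a $(2,\pm 1)$-cable yields $L(4p^2, 4pq\pm 1)$, of order $4p^2 \geq 16$, with equality at $p=2$, $q=1$ giving $L(16,7) \cong L(16,9)$ (since $-7 \equiv 9 \pmod{16}$). This establishes $L(4,1)$ and $L(16,9)$ as the respective minima for the two items.

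Third, I would identify the surgery duals as simple knots. Because $T_{2,1}$ and the $(2,\pm 1)$-cable of $T_{2,1}$ are each $(1,1)$-knots (lying respectively on the Heegaard torus and on $\partial N(T_{2,1})$), their surgery duals are $(1,1)$-knots in the resulting lens spaces, and each admits an $S^1 \times S^2$ surgery by reversing the original surgery. Theorem~\ref{thm:simplewave}(1) then identifies them as simple. To pin down $k$, I would track the winding number around the $S^1$ factor: $T_{2,1}$ has winding number $2$, so its dual represents $\pm 2\mu \in H_1(L(4,1))$, giving $K(4,1,2)$; the cable has winding number $4$, giving $K(16,9,4)$.

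The main obstacle lies in the last step. The winding-number argument gives the magnitude of $k$, but verifying that the surgery dual is $K(p,q,k)$ rather than a different simple knot $K(p,q,k')$ in the same unoriented equivalence class (i.e.\ with $k' \not\equiv \pm k^{\pm 1} \pmod p$) requires carefully tracking the meridional and longitudinal framings through the homeomorphisms $L(4,3) \cong L(4,1)$ and $L(16,7) \cong L(16,9)$, ensuring that the identified $k$-value is correctly normalized against the chosen generator $\mu$ of $H_1$.
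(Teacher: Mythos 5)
Your proposal is correct and follows the route the paper intends: the corollary is stated there without an explicit proof, as an immediate consequence of the preceding classification of non-hyperbolic knots with lens space surgeries, the surgery formulas $L(np^2,npq+1)$ and $L(4p^2,4pq\pm1)$ (minimized at $4n$ with $n=\pm1$ and at $4p^2$ with $p=2$), and the identification of the duals as simple knots in the homology classes of Theorem~\ref{thm:mainhomology}, since $T_{2,1}$ and its $(2,\pm1)$--cable are {\sc bgi} and {\sc bgii} knots with $m=2$ and $m=4$ respectively. The normalization worry in your last paragraph resolves itself: the dual's class must generate the index-$w$ subgroup of $\Z/m^2$ with $w=m$, hence equals $am$ for a unit $a$, and for $m=2$ and $m=4$ the set $\{am \bmod m^2 : \gcd(a,m^2)=1\}$ is exactly $\{\pm m\}$, so no further tracking through the homeomorphisms $L(4,3)\cong L(4,1)$ and $L(16,7)\cong L(16,9)$ is needed.
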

(The orientation of a knot has no bearing upon its surgeries.  Ignoring orientations, $K(16,9,4)$ is equivalent to $K(16,9,12)$.  The simple knot $K(4,1,2)$ is isotopic to its own orientation reverse.)

\smallskip

Bleiler-Litherland conjecture that the smallest order lens space obtained by surgery on a hyperbolic knot in $S^3$ is homeomorphic to $L(18,5)$ \cite{bleilerlitherland}.  Among our list of doubly primitive knots in $S^1 \times S^2$, up to homeomorphism, we find three hyperbolic knots of order $5$ in families {\sc bgiii}, {\sc bgv}, and {\sc gofk}  with integral lens space surgeries; all the doubly primitive knots with smaller order are non-hyperbolic. 

\begin{conj}
Up to homeomorphism, $L(25,7)$ and $L(25,9)$ are the smallest order lens spaces obtained by surgery on a hyperbolic knot in $S^1 \times S^2$.  Moreover, the surgeries occur on the knots shown in Figure~\ref{fig:smallhyperbolicknots}.  From left to right, their surgery duals are the (unoriented) simple knots  $K(25,7,5)$, $K(25,7,10)$, and $K(25,9,10)$ respectively.
\end{conj}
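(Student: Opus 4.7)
The plan is to reduce the conjecture to a finite enumeration using the classification conjectures stated earlier, then verify hyperbolicity case by case on the short list that survives.  Assume Conjectures~\ref{conj:main} and~\ref{conj:dp}, so that every knot in $S^1\times S^2$ with a longitudinal lens space surgery belongs to one of the families {\sc bg}, {\sc gofk}, or {\sc spor}.  Combined with the preceding theorem's classification of non-hyperbolic knots with such surgeries (torus knots $T_{p,q}$ and their $(2,\pm1)$-cables) and the constraint $p=m^{2}$ from Theorem~\ref{thm:main}, it suffices to examine the doubly primitive knots arising at $m\in\{\pm1,\pm2,\pm3,\pm4,\pm5\}$.

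For $|m|\le 4$ I would run through each case of Theorems~\ref{thm:main} and~\ref{thm:mainhomology}, enumerate the admissible $(m,d)$ pairs, and identify each resulting simple-knot dual $K(m^{2},q,k)$ with the surgery dual of a torus knot or a $(2,\pm1)$-cable thereof.  For example, $K(4,1,2)$ is the dual of the $(2,\pm1)$-torus knot and $K(16,9,4)$ is the dual of its $(2,\pm1)$-cable, as recorded in the corollary above; matching every other candidate is routine bookkeeping using the $1/n$--surgery formula $L(np^{2},npq+1)$ on $T_{p,q}$ and its analogue for $(2,\pm1)$-cables.  The outcome, consistent with that corollary, is that every such dual is non-hyperbolic, so no hyperbolic knot of order $<25$ admits a longitudinal lens space surgery.

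For $|m|=5$ the enumeration yields three candidates that are not cables of torus knots.  Case~(2) of Theorem~\ref{thm:main} is vacuous since $\gcd(5,d)\neq 2$.  In Case~(3), the {\sc spor} condition $m=1-2d$ with $d$ odd and $d\mid m-1$ has the unique solution $m=-5,\;d=3$, producing surgeries on $L(25,7)$ with simple-knot duals $K(25,7,5)$ and $K(25,7,10)$ (from $k=\pm 4m\equiv\pm 5$ and $k=\pm 2m\equiv\pm 10\pmod{25}$); the {\sc bgiii} and {\sc bgv} entries at $(m,d)=(-5,3)$ share these duals and thus coincide with the {\sc spor} knots up to isotopy, by uniqueness of the knot recoverable from its surgery dual.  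The {\sc gofk} candidate at $m=5,\;d=2$ has dual $K(25,11,10)$, which translates under the homeomorphism $L(25,11)\cong L(25,9)$ (from $11\cdot 9\equiv -1\pmod{25}$) to $K(25,9,10)$.  All remaining candidates at $|m|=5$ produce either surgeries on $L(25,4)$ and its $q$-equivalents, or duals such as $K(25,11,5)\cong K(25,9,5)$, whose originals are torus knots in $S^{1}\times S^{2}$ or their $(2,\pm1)$-cables.  Discarding these non-hyperbolic entries leaves exactly the three claimed knots.

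The main obstacle is confirming genuine hyperbolicity of the three survivors rather than merely ruling out the obvious non-hyperbolic structures: one must verify the exteriors admit no essential tori and no Seifert fibrations.  The most efficient route is computational, feeding the spherical braid descriptions produced in Section~\ref{sec:braids} into SnapPy to obtain a certified complete hyperbolic structure; a synthetic proof via JSJ theory and the classification of torus knots and their cables in $S^{1}\times S^{2}$ is possible but considerably longer.  The deeper obstruction is that the argument rests on Conjectures~\ref{conj:main} and~\ref{conj:dp}, both of which are open; excluding hypothetical knots outside {\sc bg}, {\sc gofk}, and {\sc spor} that could realize a lens space surgery of order $<25$, or produce $L(25,q)$ with $q\notin\{7,9\}$ up to homeomorphism, would require progress on Conjecture~\ref{conj:homologyclasses} or a comparable refinement.
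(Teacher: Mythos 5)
The statement you are addressing is labeled a \emph{conjecture} in the paper, and the paper supplies no proof of it; the authors only record the supporting evidence in the sentence immediately preceding it, namely that among \emph{their list} of doubly primitive knots all those of order less than $25$ are non-hyperbolic and exactly three hyperbolic knots of order $5$ occur, in families {\sc bgiii}, {\sc bgv}, and {\sc gofk}. Your proposal reconstructs that reasoning honestly, but it is a conditional reduction rather than a proof: everything hinges on Conjectures~\ref{conj:main} and~\ref{conj:dp} (equivalently, via Theorem~\ref{thm:reduction}, on Conjecture~\ref{conj:homologyclasses}), which are open, and you say so yourself. Without them, nothing excludes a hyperbolic knot outside {\sc bg}, {\sc gofk}, and {\sc spor} surgering to some $L(m^2,q)$ with $m^2<25$, or to $L(25,4)$. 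So the proposal cannot close the statement; at best it explains why the authors believe it, which is exactly the status the paper assigns it.

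Within the conditional enumeration there is also a concrete misstep. You read the {\sc spor} entry ``$k=\pm 2m=\pm 4m$'' of Theorem~\ref{thm:mainhomology} as producing two distinct classes $\pm 5$ and $\pm 10$ in $H_1(L(25,7))$ and conclude that both {\sc bgiii} and {\sc bgv} ``coincide with the {\sc spor} knots up to isotopy.'' This contradicts Lemma~\ref{lem:spor}: at $n=-1$ the {\sc spor} and {\sc bgiii} duals both represent $\pm 10\mu$ (the single unoriented simple knot $K(25,7,10)$, and those two knots are indeed isotopic), whereas the {\sc bgv} dual represents $\pm 5\mu$, i.e.\ $K(25,7,5)$, and is \emph{not} isotopic to the sporadic knot --- Figure~\ref{fig:smallhyperbolicknots} shows two genuinely distinct knots surgering to $L(25,7)$. (In the proof of Theorem~\ref{thm:mainhomology} the $4m$ is the coefficient with respect to $\mu_1=\mu'$ and $-2m$ the coefficient with respect to $\mu$; they name one class, not two.) Your final list of three duals is nonetheless the correct one. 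The remaining steps --- checking that every candidate of order at most $16$ and every order-$25$ candidate over $L(25,4)$ is a torus knot or a $(2,\pm1)$-cable, and certifying hyperbolicity of the three survivors --- are asserted rather than carried out, but the paper does not carry them out either.
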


\begin{remark}
The knots on the right and left of Figure~\ref{fig:smallhyperbolicknots} are actually isotopic.  Kadokami-Yamada show that among the non-torus {\sc gofk} knots this is the only one (up to homeomorphism) that admits two non-trivial lens space surgeries \cite{ky}.  Along these lines, Berge shows there is a unique hyperbolic knot in the solid torus with two non-trivial lens space surgeries \cite{bergeS1xD2}, and this gives rise to a single {\sc bgiv} knot (up to homeomoprhism) having surgeries to both orientations of $L(49,18)$ \cite{bhw}.
\end{remark}

\begin{figure}
\centering
\includegraphics[width=5in]{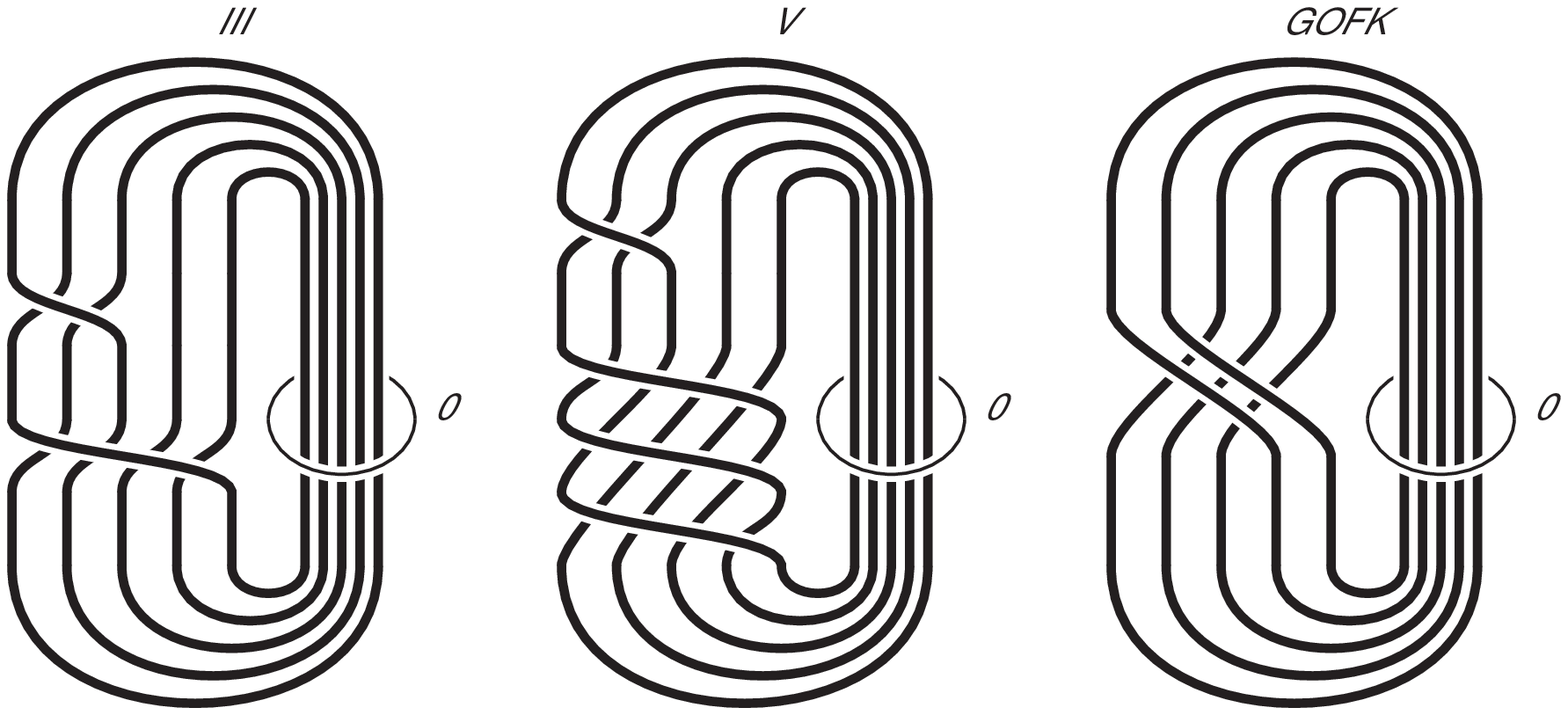}
\caption{ }
\label{fig:smallhyperbolicknots}
\end{figure}

\subsection{Basic definitions and some notation}\label{sec:notation}

\subsubsection{Dehn surgery}
Consider a knot $K$ in a closed $3$--manifold $M$ with regular solid torus neighborhood $N(K)$.  The isotopy classes of essential simple closed curves on $\bdry N(K) \cong \bdry(M-N(K))$ are called {\em slopes}.  The {\em meridian} of $K$ is the slope that bounds a disk in $N(K)$ while the slopes that algebraically intersect the meridian once (and hence are isotopic to $K$ in $N(K)$) are {\em longitudes}.  Given a slope $\gamma$, the manifold obtained by removing the solid torus $N(K)$ from $M$ and attaching another solid torus so that $\gamma$ is its meridian is the result of {\em $\gamma$--Dehn surgery on $K$}.  The core of the attached solid torus is a new knot in the resulting manifold and is the {\em surgery dual} to $K$.  If $\gamma$ is a longitude, then $\gamma$--Dehn surgery is a {\em longitudinal surgery} or simply a {\em surgery}.  Fixing a choice of longitude and orienting both the meridian and this longitude so that they represent homology classes $\mu$ and $\lambda$ in $H_1(\bdry N(K))$ with $\mu \cdot \lambda = +1$ enables a parametrization associating the slope $\gamma$ to the extended rational number $\frac{p}{q} \in \Q \cup \{1/0\}$, $\gcd(p,q)=1$, if for some orientation $[\gamma] = p\lambda + q\mu$.  Then $\gamma$--Dehn surgery may also be denoted $\frac{p}{q}$--Dehn surgery.  Consequentially longitudinal surgery is also called {\em integral surgery}.

\subsubsection{Tangles and bandings}
The knot $K$ in $M$ is said to be {\em strongly invertible} if there is an involution $u$ on $M$ that set-wise fixes $K$ and whose fixed set intersects $K$ exactly twice, and the involution is said to be a {\em strong involution}.  The quotient of $M$ by $u$ is a $3$--manifold $M/\sim$, where $x \sim u(x)$, in which the fixed set of $u$ descends to a link $J$ and the knot $K$ descends to an embedded arc $\alpha$ such that $J \cap \alpha = \bdry \alpha$.  A small ball neighborhood $B = N(\alpha)$ of $\alpha$ intersects $J$ in a pair of arcs $t$ so that $(B, t)$ is a {\em rational tangle}, i.e. a tangle in a ball homeomorphic to $(D^2 \times I, \{\pm\frac{1}{2}\} \times I)$ where $D^2$ is the unit disk in the complex plane and $I$ is the interval $[-1,1]$. The solid torus neighborhood $N(K)$ of $K$ may be chosen so that the image of its quotient under $u$ is $B$, and equivalently so that it is the double cover of $B$ branched along $t$.  The {\em Montesinos Trick} refers to the correspondence through branched double covers and quotients by strong involutions between replacing the rational tangle $(B,t)$ with another and Dehn surgery on $K$.  In particular, a {\em banding} of $J$ along the arc $\alpha$ corresponds to longitudinal surgery on $K$.  A banding of $J$ is the act of embedding of a rectangle $I \times I$ in $M/\sim$ to meet $J$ in the pair of opposite edges $I \times \bdry I$ and exchanging those sub-arcs of $J$ for the other pair of opposite edges $\bdry I \times I$.  The banding occurs along an arc $\alpha = \{0\} \times I$ and the banding produces the dual arc $I \times \{0\}$.  Figure~\ref{fig:bandingillustration} illustrates the banding operation and both its framed arc and literal band depictions that we use in this article.  Figure~\ref{fig:twistillustration} shows how a rectangular box labeled with an integer $n$ denotes a sequence of $|n|$ twists in the longer direction. The twists are right handed if $n>0$ and left handed if $n<0$.  To highlight cancellations, a pair of twist boxes will be colored the same if their labels have opposite sign.

\begin{figure}
\begin{minipage}[b]{0.5\textwidth}
\centering
\includegraphics[height=2in]{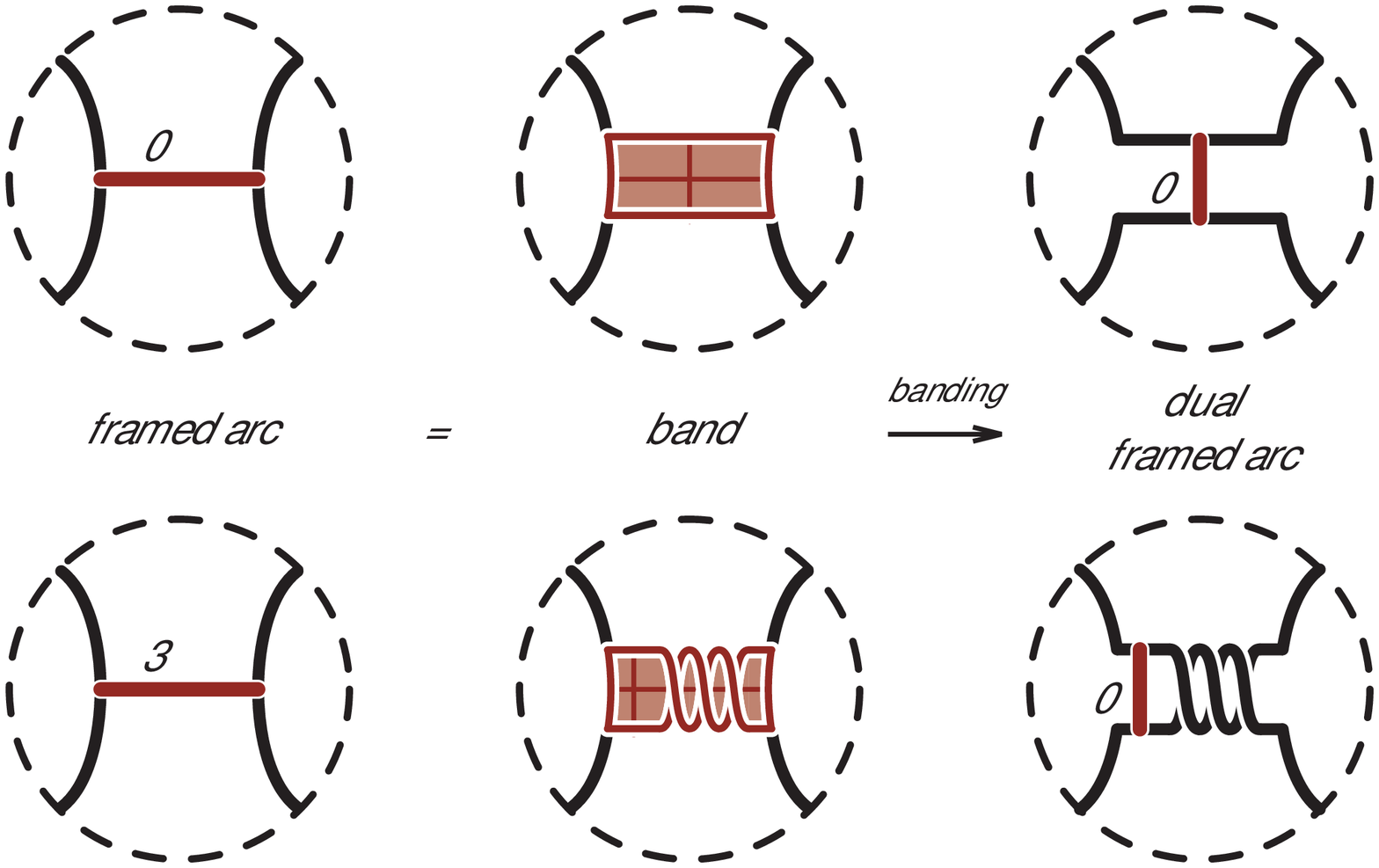}
\caption{}
\label{fig:bandingillustration}
\end{minipage}
\hspace{0.5cm}
\begin{minipage}[b]{0.5\textwidth}
\centering
\includegraphics[height=2in]{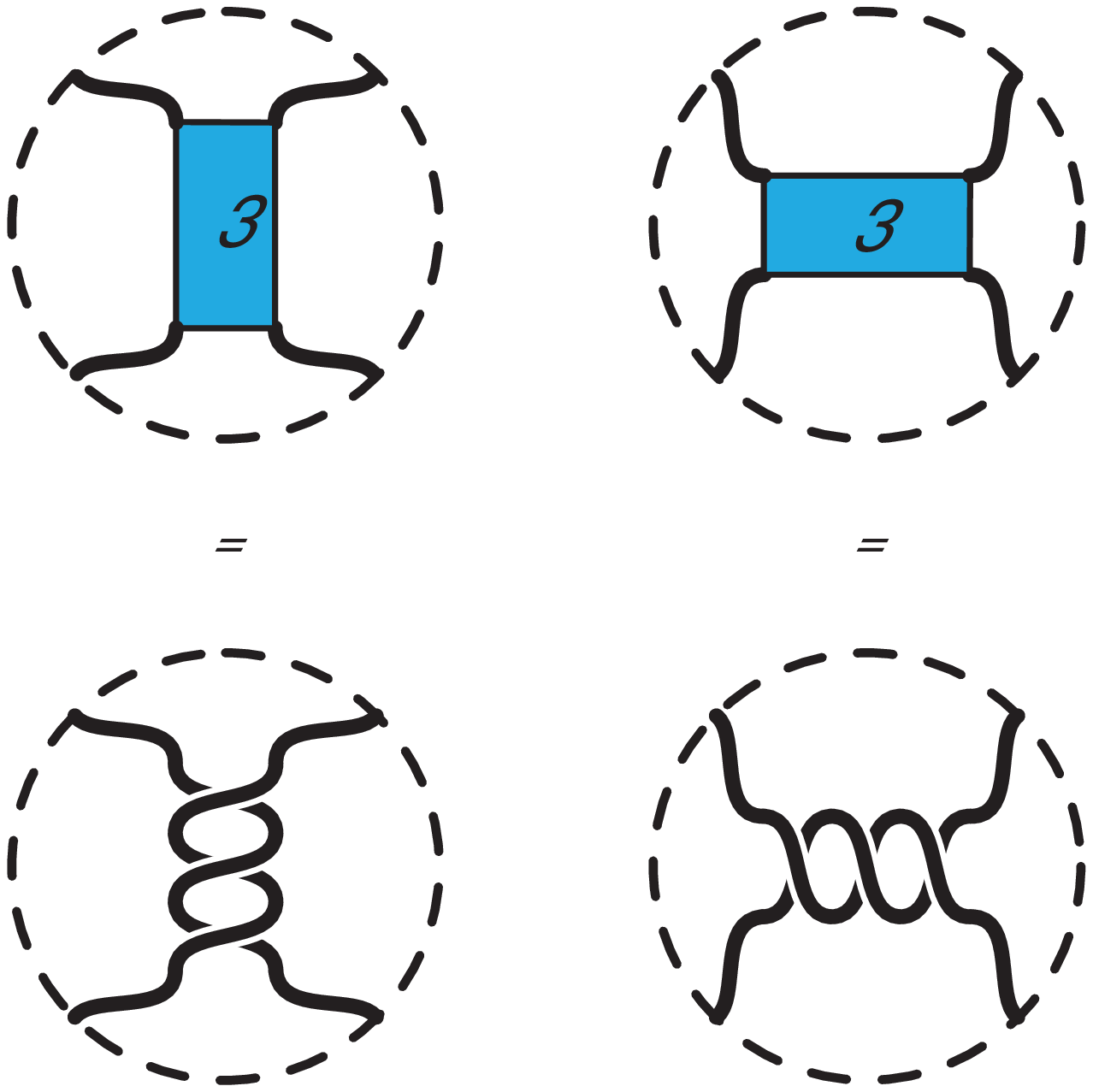}
\caption{}
\label{fig:twistillustration}
\end{minipage}
\end{figure}

\subsubsection{Lens spaces, two bridge links, plumbing manifolds}
The lens space $L(p,q)$ is defined as the result of $-p/q$--Dehn surgery on the unknot in $S^3$.
The lens space $L(p,q)$ may be obtained by surgery on the linear chain link as shown at the top of Figure~\ref{fig:chainto4plat} with integral surgery coefficients $-a_1, \dots, -a_n$ that are the negatives of the coefficients of a continued fraction expansion 
\[ \frac{p}{q} = [a_1, \dots, a_n]^- = a_1 - \cfrac{1}{a_2 - \cfrac{1}{ \quad \overset{\ddots}{a_{n-1}- \cfrac{1}{a_n}}}}.\]
The picture of this chain link also shows the axis of a strong involution $u$ that extends through the surgery to an involution of the lens space.  The quotient of this involution of the lens space, via the involution of this surgery diagram, is $S^3$ in which the axis descends to the two-bridge link $K(p,q)$ with the diagram $L(-a_1, \dots, -a_n)$ as shown at the bottom of Figure~\ref{fig:chainto4plat}.  The orientation preserving double cover of $S^3$ branched over $K(p,q)$ is the lens space $L(p,q)$.  
Observe that the two-component unlink is $K(0,1)$, $0/1 =[0]^-$, and we regard $S^1 \times S^2$ as the lens space $L(0,1)$.

\begin{figure}
\centering
\includegraphics[width=4in]{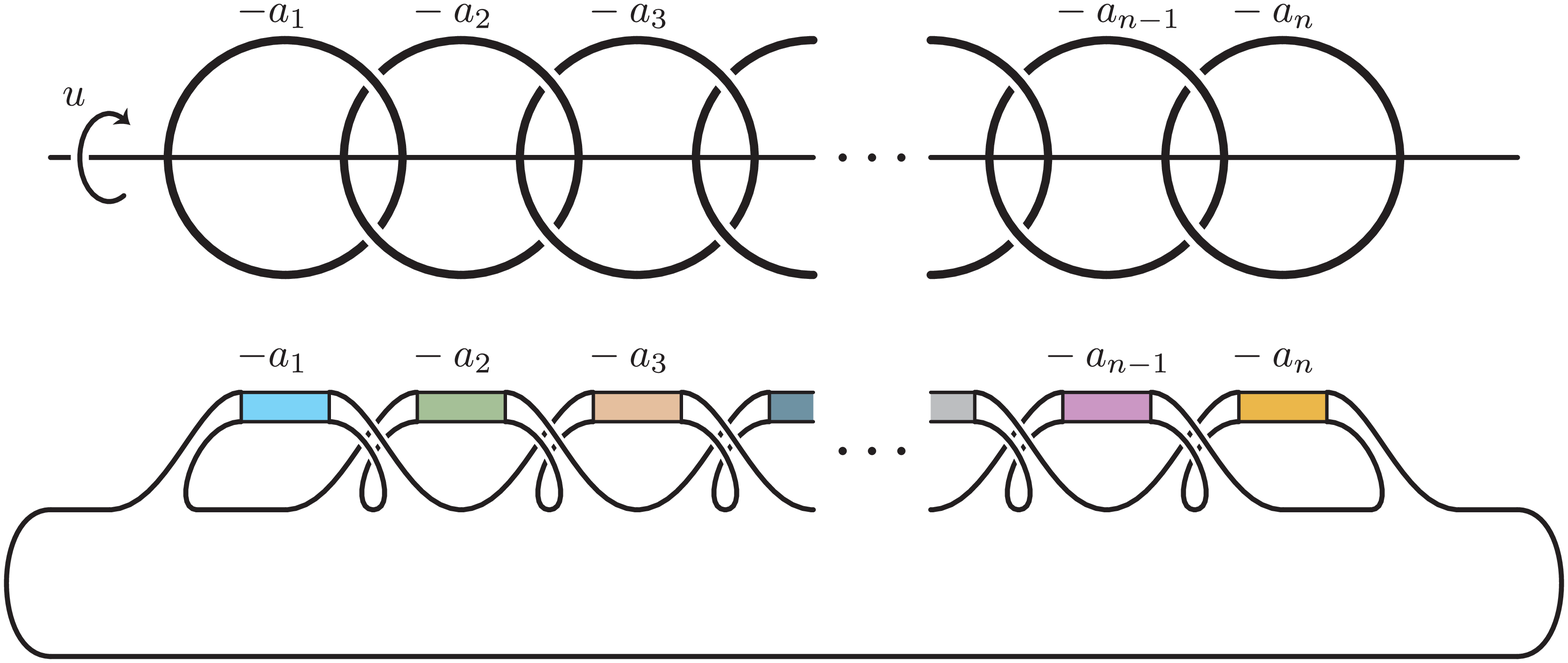}
\caption{}
\label{fig:chainto4plat}
\end{figure}

\begin{figure}
\centering
\includegraphics[width=5in]{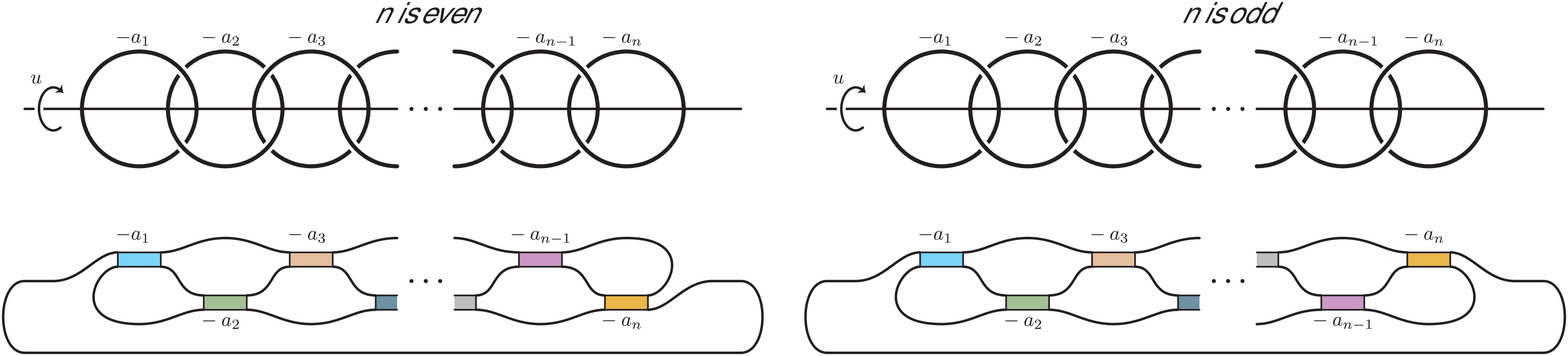}
\caption{}
\label{fig:chainto4platv2}
\end{figure}

From a $4$--manifold perspective, the top of Figure~\ref{fig:chainto4plat} is a Kirby diagram for a plumbing manifold whose boundary is $L(p,q)$.  By an orientation preserving homeomorphism, we may take $p>q>0$ and restrict the continued fraction coefficients to be integers $a_i \geq 2$ so that $L(p,q)$ is the oriented boundary of the negative definite plumbing manifold $P(p,q)$ associated to the tuple $(-a_1, \dots, -a_n)$.

Figure~\ref{fig:chainto4platv2} shows alternative (and isotopic) versions of this chain link and two-bridge link diagrams for the two cases of $n$ even and $n$ odd.

\subsection{Acknowledgements}
The authors would like to thank John Berge, Radu Cebanu, and Joshua Greene. 

This work is partially supported by grant \#209184 to Kenneth L.\ Baker from the Simons Foundation, by the Spanish GEOR MTM2011-22435 to Ana G.\ Lecuona, and EPSRC grants G039585/1 and H031367/1 to Dorothy Buck.

\section{Generalizing Berge's doubly primitive knots}
Berge describes twelve families of doubly primitive knots in $S^3$, \cite{bergeII}.  Greene confirms that this list is complete, \cite{greene}.  The first author gives surgery descriptions of these knots and tangle descriptions of the quotients by their strong involutions, \cite{bakerI, bakerII}.  (These knots admit unique strong involutions, \cite{wangzhou}.) 

 We partition Berge's twelve families into three broader families:  The Berge-Gabai knots, family {\sc bg}, arising from knots in solid tori with longitudinal surgeries producing solid tori.   The knots that embed in the fiber of a genus one fibered knot (the figure eight knot or a trefoil), family {\sc gofk}.  The so-called sporadic knots, family {\sc spor}, which may be seen to embed in a genus one Seifert surface of a banding of a $(2,\pm1)$--cable of a trefoil.  (The framing of this cabling is with respect to the Heegaard torus containing the trefoil.)

Here, we generalize these three families of doubly primitive knots in $S^3$ to obtain three analogous families of doubly primitive knots in $S^1 \times S^2$ that we also call {\sc bg}, {\sc gofk}, and {\sc spor}.   We provide explicit descriptions of these using tangle descriptions.

\subsection{The {\sc bg} knots}
Let us say a {\em strong involution} of a knot in a solid torus is an involution of a solid torus whose fixed set is two properly embedded arcs such that the knot intersects this fixed set twice and is invariant under the involution.  The quotient of the pair of the solid torus and fixed set under this involution is a rational tangle $(B,t)$ where $B$ is a $3$--ball and $t$ is a pair of properly embedded arcs together isotopic into $\bdry B$.  The image of the knot in this quotient is an arc $\alpha$ embedded in $B$ with $\bdry \alpha = \alpha \cap t$.  A knot with a strong involution is {\em strongly invertible}.

The Berge-Gabai knots in solid tori are all strongly invertible as evidenced by them being $1$--bridge braids (or torus knots) and hence tunnel number $1$.  We say an arc $\alpha$ in a rational tangle is a {\em Berge-Gabai arc} if it is the image of a Berge-Gabai knot under the quotient by a strong involution. 
By virtue of the Berge-Gabai knots admitting longitudinal solid torus surgeries, there are bandings along these arcs that produce rational tangles.   The dual arcs to these bandings in the new rational tangles are also Berge-Gabai arcs.  Using these we obtain a proof of Theorem~\ref{thm:main} along the lines of Rasmussen's observation.

\begin{proof}[Tangle Proof of Theorem~\ref{thm:main}]
Doubling a rational tangle (by gluing it to its mirror) produces the two-component unlink $K(0,1)$.  Therefore, if $(B,t)$ is a rational tangle and $\alpha$ is a Berge-Gabai arc in $(B,t)$ along which a banding produces the rational tangle $(B,t')$, then banding the double $(B,t) \cup -(B,t) = (S^3, K(0,1))$ along $\alpha$ produces the two-bridge link $(B,t') \cup -(B,t)$.

In \cite{bakerbuck}, descriptions of the Berge-Gabai arcs are derived from the quotient tangle descriptions in \cite{bakerII} of the knots in  Berge's doubly primitive families I -- VI \cite{bergeII}.      As done in \cite{bakerbuck}, one may then explicitly observe that family VI is contained within family V, and (with mirroring) family V is dual to family III.  Families I, II, and IV are each self-dual.  (One family of Berge-Gabai arcs is dual to another if the arc dual to the banding along any arc in the first family, together with the resulting tangle, may be isotoped while fixing the boundary of the tangle into the form of a member of the second family.)   It is also shown in \cite{bakerbuck} that these knots in solid tori admit a unique strong involution in which the solid torus quotients to a ball, and hence up to homeomorphism there is a unique arc in a rational tangle corresponding to each Berge-Gabai knot in the solid torus. The resulting classification of bandings between rational tangles up to homeomorphism from \cite{bakerbuck} is shown in Figure~\ref{fig:RationalTangleRSR}.   
Figures~\ref{fig:BGIandII}, \ref{fig:BGIIIandV}, and \ref{fig:BGIVandIV2}
 show the result of doubling these families (with their duals) to obtain $K(0,1)$ and then banding along a Berge-Gabai arc to obtain a two-bridge link.  Observe that the two-bridge links produced by the dual pairs in families III and V in Figures~\ref{fig:BGIandII} are equivalent as are the two-bridge links produced by the dual pairs in family IV.   For the purposes of proving Theorem~\ref{thm:main} only one among each of these dual pairs is required.
 
\begin{figure}
\centering
\includegraphics[width=5.5in]{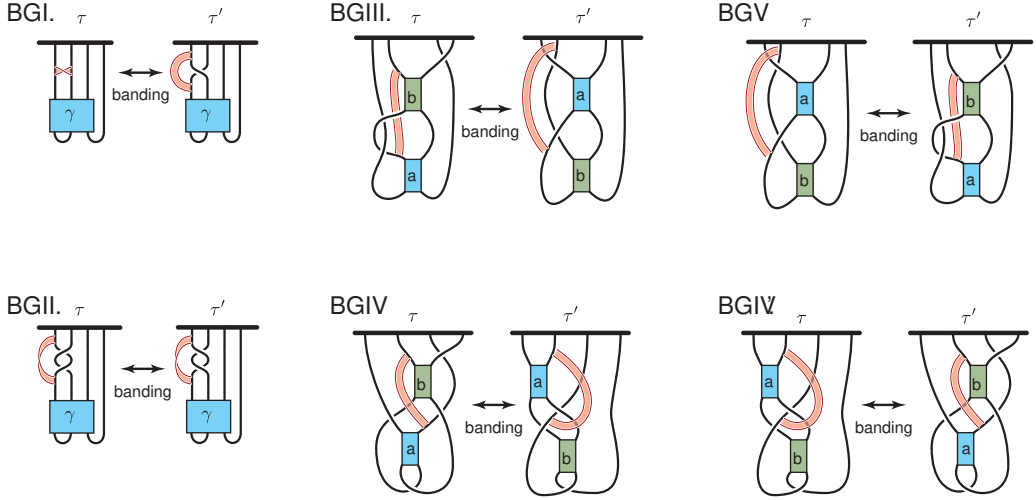}
\caption{The bandings between rational tangles up to homeomorphism, corresponding to the double branched covers of the Berge-Gabai knots in solid tori.  For {\sc bgi} and {\sc bgii} $\gamma$ is a $3$--braid.}
\label{fig:RationalTangleRSR}
\end{figure}

\begin{figure}
\centering
\includegraphics[width=5.5in]{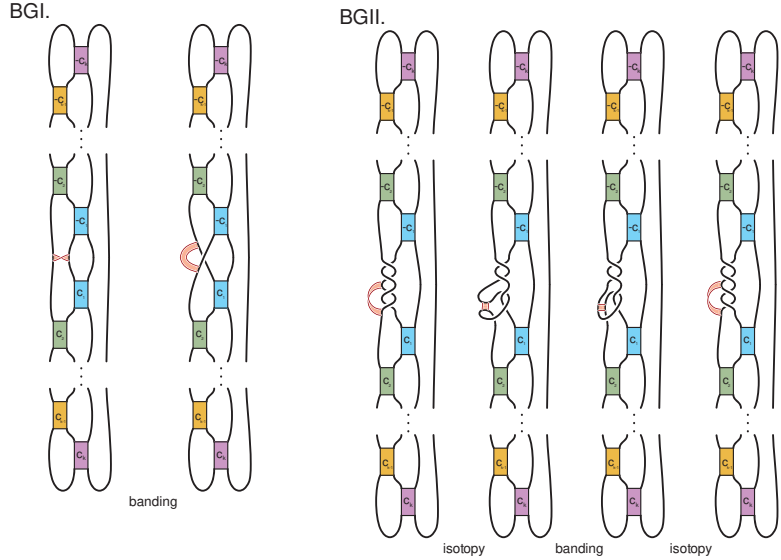}
\caption{Left, a banding from the unlink to $K(p,q)$ where $\frac{p}{q} = [c_k, \dots,  c_1, -1, -c_1,  \dots, -c_k]^-$ corresponding to family {\sc bgi}. Right, a banding from the unlink to $K(p,q)$ where $\frac{p}{q} = [c_k,\dots,  c_1, 4, -c_1,  \dots, -c_k]^-$ corresponding to family {\sc bgii}.}
\label{fig:BGIandII}
\end{figure}

\begin{figure}
\centering
\includegraphics[width=5.5in]{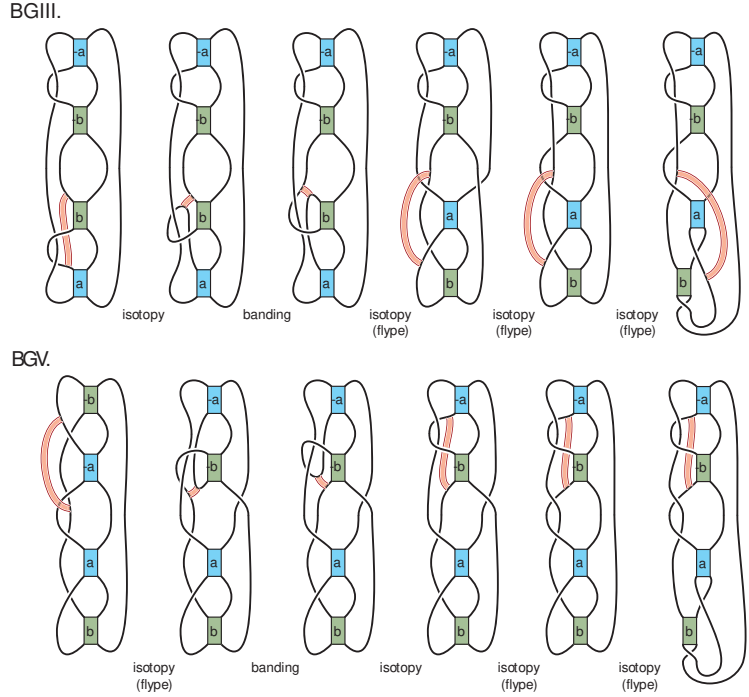}
\caption{Two bandings between the unlink and $K(p,q)$ where $\frac{p}{q} = [a,2,b,2,-a+1,-b+1]^-$ corresponding to families {\sc bgiii} and {\sc bgv}.}
\label{fig:BGIIIandV}
\end{figure}

\begin{figure}
\centering
\includegraphics[width=5.5in]{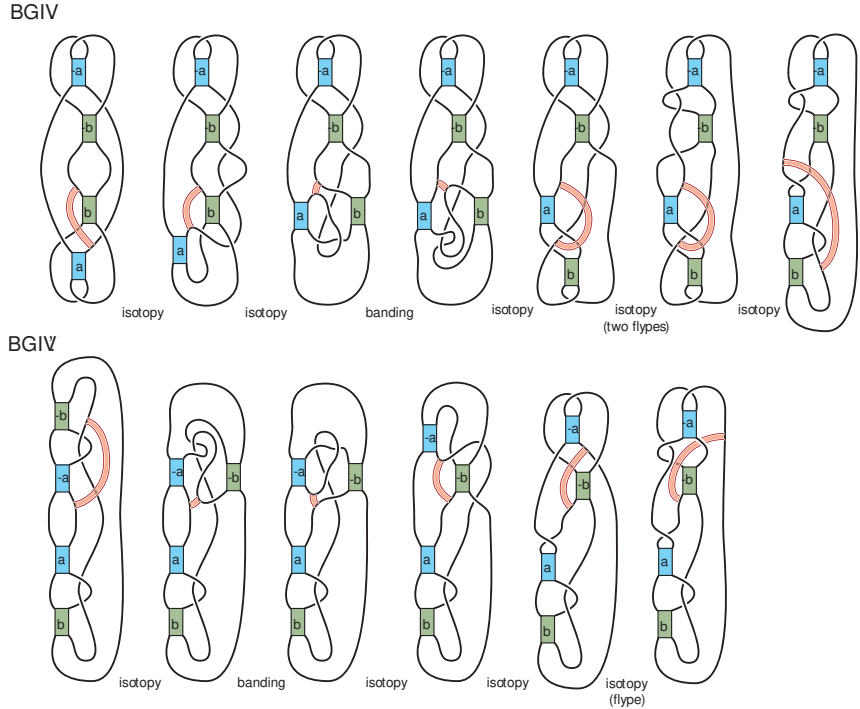}
\caption{Two bandings between the unlink and $K(p,q)$ where $\frac{p}{q}=[a-1,-2,b,-a+1,2,-b]^-$ both corresponding to family {\sc bgiv}.}
\label{fig:BGIVandIV2}
\end{figure}

\begin{figure}
\centering
\includegraphics{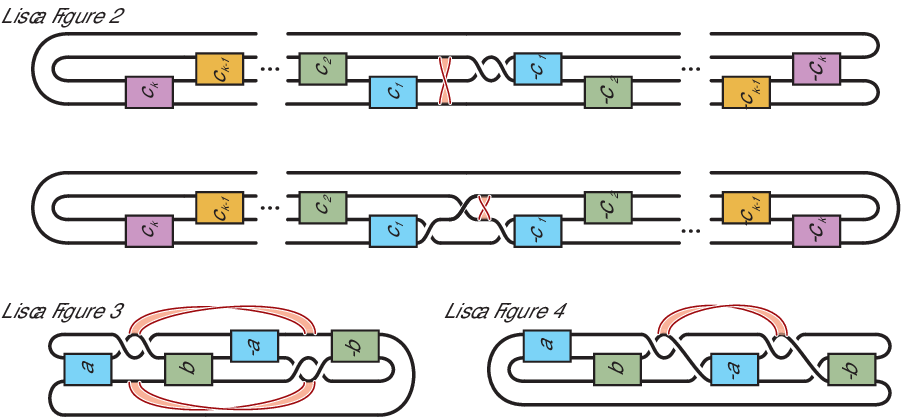}
\caption{}
\label{fig:liscalinks}
\end{figure}

We now observe that the two-bridge links produced match with those of Lisca's Section~8 \cite{lisca}.  He shows that up to homeomorphism these links may be presented as in the first of one of his Figure~2, Figure~3, or Figure~4.    We redraw these three in Figure~\ref{fig:liscalinks} for the reader's convenience, isotoping his Figure~2.   In each of his Figure~2 and Figure~4 Lisca exhibits a single banding as shown in our Figure~\ref{fig:liscalinks} that transforms those two-bridge links to the unlink.  In his Figure~3 he uses two bandings as also shown in our Figure~\ref{fig:liscalinks}. As one may now observe, the two bridge links of Lisca's Figures~2,3,4 correspond (with mirroring and reparametrizations as needed) to those produced respectively in families I, IV, III  of Figures~\ref{fig:BGIandII}, \ref{fig:BGIVandIV2}, and \ref{fig:BGIIIandV}.  Note that family II produces two-bridge links not accounted for in Lisca's pictures.  Nevertheless the corresponding lens spaces are accounted for in his proof of \cite[Lemma~7.2]{lisca}, as discussed in Remark~\ref{rem:missing}.
\end{proof}

\begin{remark}
In $S^1 \times S^2$, as in $S^3$, family I consists of the torus knots while family II consists of the $(2,\pm1)$--cables of torus knots.  (This cable is taken with respect to the framing induced by the Heegaard torus containing the torus knot.)  Families III, IV, and V contain hyperbolic knots.
\end{remark}

\subsection{The {\sc gofk} knots}
Conjecture~1.8 of \cite{greene} proposes that the knot surgeries corresponding to the double branched covers of the above bandings are, up to homeomorphisms, the only way that integral surgery on a knot in $S^1 \times S^2$ may yield a lens space.   However since $S^1 \times S^2$ contains a genus one fibered knot, we may form the family {\sc gofk} of knots that embed in the fiber of genus one fibered knots in $S^1 \times S^2$ and then mimic \cite{bakerI} to produce our first infinite family of counterexamples.

The annulus together with the identity monodromy gives an open book for $S^1 \times S^2$.  Plumbing on a positive Hopf band along a spanning arc produces a once-punctured torus open book, i.e.\ a (null-homologous) genus one fibered knot.  One may show (e.g.\ \cite{cgofkils}) that this and its mirror are the only two genus one fibered knots in $S^1 \times S^2$.  Any essential simple closed curve in one of these fibers is then a doubly primitive knot in $S^1 \times S^2$ and thus admits a lens space surgery along the slope of its page framing.  We call the family of these essential simple closed curves the {\sc gofk}.  These knots are analogous to the knots in Berge's families VII and VIII, \cite{bergeII}.  Yamada had previously developed this family of knots with these lens space surgeries \cite{yamada}, though  he constructs them from a different viewpoint. 

\begin{lemma}\label{lem:gofk}
There are {\sc gofk} knots that are not Berge-Gabai knots.  Moreover the {\sc gofk} knots contain hyperbolic knots of arbitrarily large volume.
\end{lemma}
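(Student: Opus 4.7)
The plan is to handle the two assertions independently.

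For the first claim, I would compare surgery duals via Theorem~\ref{thm:mainhomology}. A GOFK knot yielding $L(m^2, md+1)$ with $\gcd(m,d)=1$ has dual the simple knot $K(m^2, md+1, \pm dm)$, while the only Berge-Gabai family producing such a lens space is \bgi, with dual $K(m^2, md+1, \pm m)$. By Theorem~\ref{thm:simplewave} and the uniqueness of the simple knot in each torsion homology class, these two doubly primitive knots coincide (up to homeomorphism of $S^1 \times S^2$) if and only if the residues $\pm dm$ and $\pm m$ in $\Z/m^2\Z$ are identified by some self-homeomorphism of $L(m^2, md+1)$. The self-homeomorphism group of $L(m^2, md+1)$ acts on $H_1$ by multiplication by an element of $\{\pm 1, \pm(md+1)^{\pm 1}\}$, each of which reduces to $\pm 1 \pmod m$; hence agreement forces $d \equiv \pm 1 \pmod m$. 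Choosing any $(m,d)$ with $\gcd(m,d)=1$ and $d \not\equiv \pm 1 \pmod m$ (for instance $m=5$, $d=2$) then exhibits a GOFK that is not a Berge-Gabai knot.

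For the second claim, I would parameterize the GOFK knots by essential simple closed curves on the once-punctured torus fiber $F$ of the genus one fibered knot $J \subset S^1 \times S^2$, modulo the isotopy action of the monodromy $\phi = \tau_a$. With respect to a symplectic basis $(a, b)$ of $H_1(F)$, essential curves have slopes $(p,q)$ with $\gcd(p,q)=1$, and $\phi$ acts on slopes by $(p,q) \mapsto (p+q, q)$. A direct computation in the mapping torus yields $H_1(S^1 \times S^2) = \Z\langle[b]\rangle$ (the relation $\phi_*(b) - b = a$ kills $[a]$, and capping off $J$ kills the circle direction) with $[K_{(p,q)}] = q$. In particular the $K_{(p,q)}$ fall into infinitely many distinct homology classes and so constitute an infinite family of pairwise non-isotopic knots. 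By the classification of non-hyperbolic knots with lens space surgeries recorded in the introduction, the only non-hyperbolic doubly primitive knots in $S^1 \times S^2$ are the \bgi and \bgii families; since $\gcd(m,d) = 1$ for GOFK while $\gcd(m,d) = 2$ for \bgii, the non-hyperbolic GOFK coincide with the GOFK that also lie in \bgi, i.e.\ those with $d \equiv \pm 1 \pmod m$. Hence infinitely many $K_{(p,q)}$ are hyperbolic, and the Jorgensen--Thurston finiteness theorem (only finitely many hyperbolic $3$--manifolds have volume below any given bound) forces their volumes to be unbounded.

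The main technical obstacle is the genericity check in the second claim: verifying that every $K_{(p,q)}$ with $\gcd(m,d) = 1$ and $d \not\equiv \pm 1 \pmod m$ is genuinely hyperbolic, rather than merely distinct from the known non-hyperbolic doubly primitive knots. Essential spheres are ruled out by $[K_{(p,q)}] = q \neq 0$ combined with irreducibility of $S^1 \times S^2 \setminus \nbhd(J)$, and Seifert fibrations reduce via the introduction's classification to the torus-knot case already accounted for. The delicate step is excluding essential tori: one places any candidate essential torus into standard position with respect to the fibration of the $J$-exterior and then analyses how it meets the pair-of-pants $F \setminus \nbhd(K_{(p,q)})$, exploiting the parabolic action of $\phi$ on slopes to rule out all but the degenerate cases above.
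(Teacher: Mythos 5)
Your argument for the second assertion contains a fatal error: the ``finiteness'' statement you invoke is not what J{\o}rgensen--Thurston says. It is \emph{not} true that only finitely many hyperbolic $3$--manifolds have volume below a given bound; rather, all manifolds of volume $\le V$ are obtained by Dehn filling a finite list of cusped manifolds, and each such cusped manifold has infinitely many distinct hyperbolic fillings, all of volume strictly less than that of the parent. So exhibiting infinitely many pairwise non-isotopic hyperbolic {\sc gofk} knots does not force their volumes to be unbounded. Indeed, the paper's own proof points out that the hyperbolic Berge--Gabai knots form exactly such a family: infinitely many distinct hyperbolic knots, all with volume less than $\vol(\mathrm{MT5C})<11$ because they are all fillings of the single cusped manifold $\mathrm{MT5C}$. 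The paper's route around this is constructive: following \cite{bakerI}, each {\sc gofk} knot has a surgery description on a minimally twisted $(2n+1)$--chain link, and for every $n$ one can choose the surgery coefficients arbitrarily large in magnitude; Thurston's Hyperbolic Dehn Surgery Theorem then gives hyperbolic {\sc gofk} knots with volume approaching $\vol(\mathrm{MT}(2n+1)\mathrm{C})$, and Adams' lower bound on the volume of an $n$--cusped hyperbolic manifold makes these limits unbounded in $n$. This simultaneously yields the first assertion (volume $>11$ rules out being Berge--Gabai) and dispenses with your ``delicate step'' of excluding essential tori by hand, which you have only sketched and which would be genuinely laborious in the fibered exterior.

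Your argument for the first assertion is a legitimately different route --- comparing homology classes of the surgery duals, in the spirit of the paper's Lemma~\ref{lem:spor} for the sporadic knots --- but as written it also has a gap: you assert that {\sc bgi} is the only Berge--Gabai family producing $L(m^2,md+1)$ with $\gcd(m,d)=1$, whereas the four families of Theorem~\ref{thm:main} can overlap as unoriented lens spaces, so for any specific $(m,d)$ you must check that no homeomorphism carries $L(m^2,md+1)$ to a lens space of the third or fourth type and matches $\pm dm$ with the homology class of a {\sc bgiii}, {\sc bgiv}, or {\sc bgv} dual. For a single witness such as $(m,d)=(5,2)$ this is a finite verification and the conclusion survives, but it is not automatic and must be carried out.
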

\begin{proof}
Following \cite{bakerI} each {\sc gofk} knot admits a surgery description on the Minimally Twisted $2n+1$ Chain link  (the MT$(2n+1)$C for short) for some $n \in \Z$.  Furthermore, for each positive integer $n$ and any value $N$, there is a doubly primitive knot on a once-punctured torus page of this open book with a surgery description on the MT$(2n+1)$C whose surgery coefficients all have magnitude greater than $N$.  Therefore, as in \cite{bakerI}, since MT$(2n+1)$C is hyperbolic for $n\geq 2$ we may conclude using Thurston's Hyperbolic Dehn Surgery Theorem \cite{thurston} and the lower bound on a hyperbolic manifold with $n$ cusps \cite{adams} that the set of volume of hyperbolic knots on this once-punctured torus page is unbounded.  The Berge-Gabai knots in $S^1 \times S^2$ of \cite[Conjecture~1.8]{greene} however all admit surgery descriptions on the MT5C (as apparent from \cite{bakerII})  and thus have volume less than $\vol(MT5C)<11$.
\end{proof}

Any genus one fibered knot may be viewed as the lift of the braid axis in the double cover of $S^3$ branched over a closed $3$--braid.  This enables a pleasant interpretation of the {\sc gofk} knots and their lens space surgeries as corresponding to bandings from a closed $3$--braid presentation of the unlink to two-bridge links.  Because Lisca's list of two-bridge links that admit bandings to the unlink is complete, these bandings must give different bandings to the unlink for some (in a sense, most) two-bridge links.  Indeed Figure~\ref{fig:twobandings} shows the two different bandings between the unlink and a two-bridge link corresponding to the {\sc gofk} knots on the left and {\sc bgi} knots on the right.

\begin{figure}
\centering
\includegraphics[width=5in]{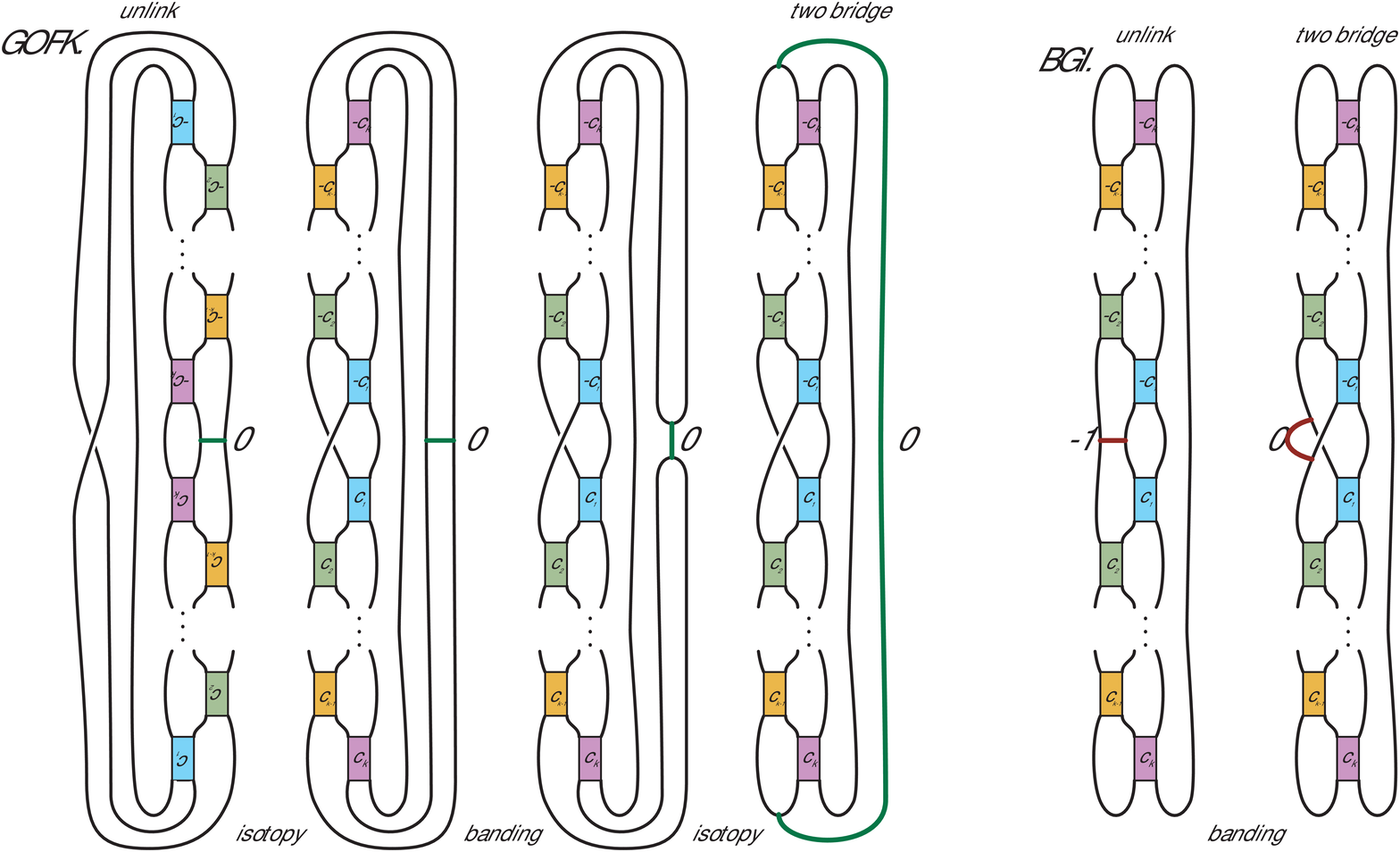}
\caption{Left, a banding from the unlink to a two-bridge link corresponding to family {\sc gofk}.  Right, a banding from the unlink to the same two-bridge link corresponding to family {\sc bgi}.}
\label{fig:twobandings}
\end{figure}

\subsection{The {\sc spor} knots}
Berge's families IX--XII of doubly primitive knots in $S^3$ condense to two families and are collectively referred to as the sporadic knots.  In the double branched cover, the family of blue arcs ($n \in \Z$) in the second link of Figure~\ref{fig:sporadic} lifts to the analogous sporadic knots in $S^1 \times S^2$, the $S^1\times S^2$--{\sc spor} knots.   (As one may confirm by examining the tangle descriptions in \cite{bakerII}, Berge's two sporadic knot families are obtained by placing $-3$ instead of $-2$ twists in either the top or bottom dashed oval, but not both.)  This second link is the two-component unlink as illustrated by the subsequent isotopies.  The link at the beginning of Figure~\ref{fig:sporadic} results from banding as shown.  It is a two-bridge link and coincides with the two bridge link in family III of Figure~\ref{fig:BGIIIandV} with $a=n$ and $b=-1$ and, after mirroring, with the two-bridge link of Lisca's Figure 4 in our Figure~\ref{fig:liscalinks} with $a=2$ and $b=-n+1$.   In Section~\ref{sec:homologyclasses} we show these knots are generically distinct from the Berge-Gabai knots by examining the homology classes of the corresponding knots in the lens spaces.

\begin{figure}
\centering
\includegraphics[width=5in]{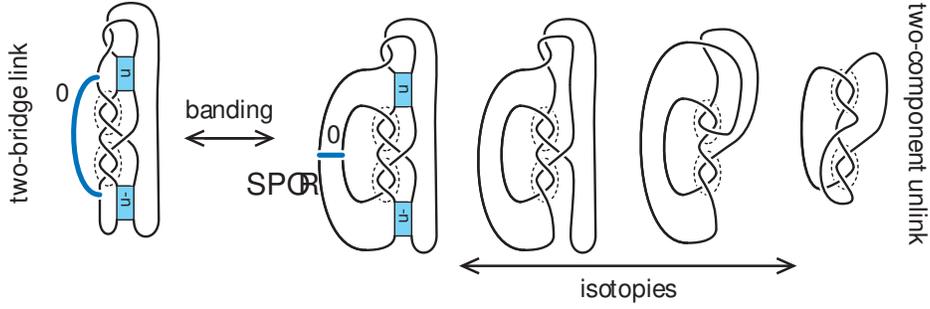}
\caption{A banding between a two-bridge link and the unlink corresponding to the sporadic family of knots.  Replace either of the $-2$ twists in the dashed ovals, but not both, with $-3$ twists to obtain bandings corresponding to Berge's families of sporadic knots in $S^3$. }
\label{fig:sporadic}
\end{figure}

\section{Homology classes of the dual knots in lens spaces} 
\label{sec:homologyclasses} 
 
Figure~\ref{fig:chainsurgery} gives, up to homeomorphism, strongly invertible surgery descriptions of the lens space duals to the {\sc bg}, {\sc gofk}, and {\sc spor} knots with their $S^1 \times S^2$ surgery coefficient.   Appendix section~\ref{sec:chaintotangle} shows how the quotients of these surgery descriptions produce the tangles  in Figures~\ref{fig:BGIandII}, \ref{fig:BGIIIandV}, \ref{fig:BGIVandIV2}, \ref{fig:twobandings}, and \ref{fig:sporadic} that defined these knots.   We will use these surgery descriptions to determine the homology classes of these knots.

 \begin{figure}
\centering
\includegraphics[width=5.5in]{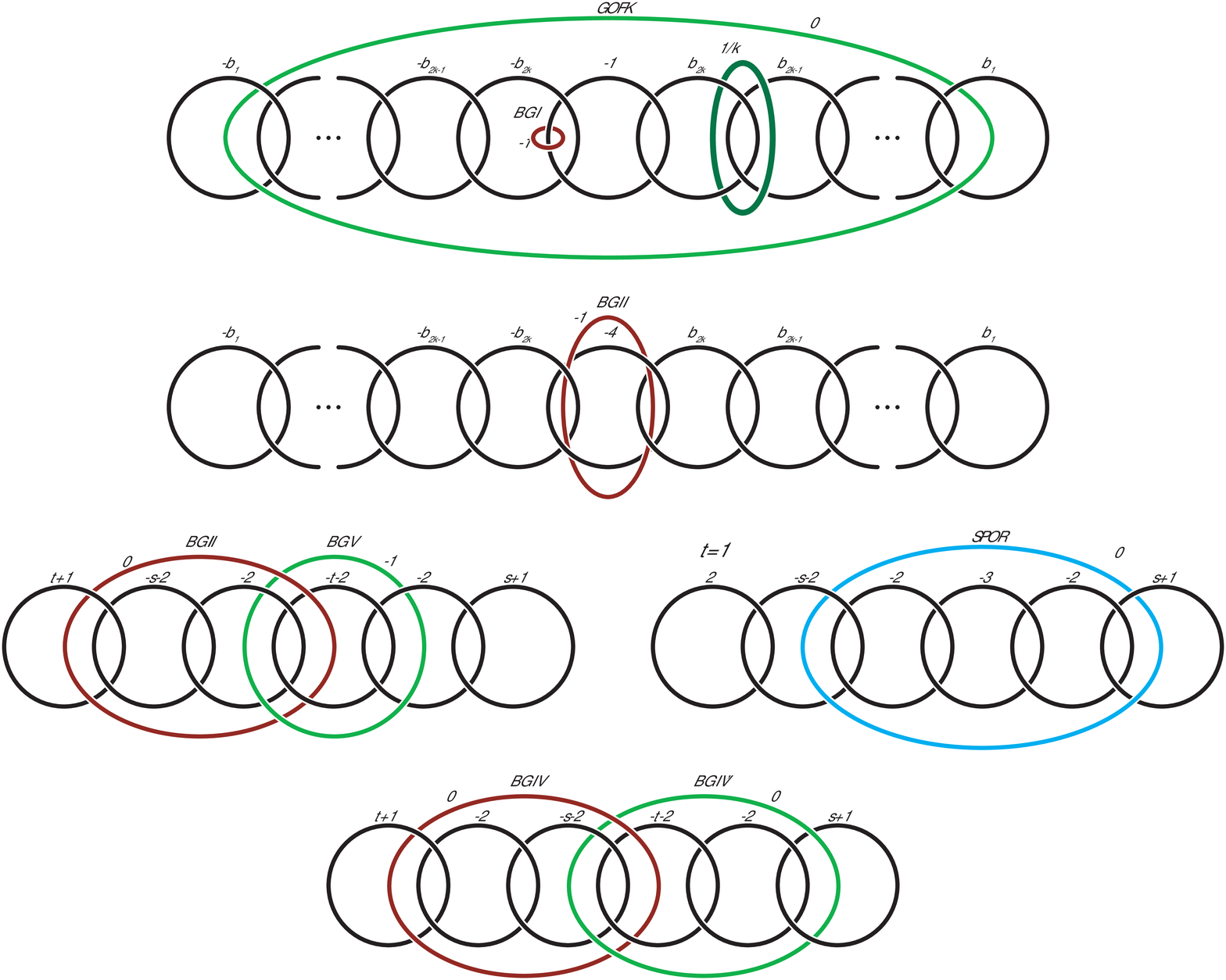}
\caption{Chain link surgery description of the lens space duals to $S^1 \times S^2$ doubly primitive knots.}
\label{fig:chainsurgery}
\end{figure}

\subsection{Continued fractions}
First we establish a few basic results about continued fractions.  These appear throughout the literature in various forms, but it is useful to set notation and collect them here.

Given the continued fraction $[a_1, \dots, a_k]^-$, define the numerators  and denominators of the ``forward'' convergents as follows:  
\[
\begin{array}{lll}
P_{-1} = 0 & P_0 = 1 & P_i = a_i P_{i-1} - P_{i-2} \\
Q_{-1} = -1 & Q_0 = 0 & Q_i = a_i Q_{i-1} - Q_{i-2}
\end{array}
\]
 \begin{claim}\label{claim:forwardP}
For $i=1, \dots, k$,  $\frac{P_i}{Q_i} = [a_1, \dots, a_i]^-$ and $P_{i-1} Q_i - P_i Q_{i-1}=1$.
\end{claim}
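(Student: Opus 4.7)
The plan is to establish both assertions simultaneously by induction on $i$, working directly from the recursions. For the base case I would check $i=1$ (and note the $i=0$ case of the determinant identity, $P_{-1}Q_0 - P_0Q_{-1} = 0\cdot 0 - 1\cdot(-1)=1$, which initializes the induction cleanly). At $i=1$ the recursions give $P_1 = a_1$, $Q_1 = 1$, so $P_1/Q_1 = a_1 = [a_1]^-$ and $P_0 Q_1 - P_1 Q_0 = 1$.

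For the determinant identity, the inductive step is purely algebraic: substituting $P_i = a_iP_{i-1} - P_{i-2}$ and $Q_i = a_iQ_{i-1} - Q_{i-2}$ into $P_{i-1}Q_i - P_iQ_{i-1}$ causes the $a_i$ terms to cancel, leaving $P_{i-2}Q_{i-1} - P_{i-1}Q_{i-2}$, which is $1$ by the previous case. Equivalently and perhaps more cleanly, one packages the recursion as the matrix identity $\begin{pmatrix} P_{i-1} & P_i \\ Q_{i-1} & Q_i \end{pmatrix} = \begin{pmatrix} P_{-1} & P_0 \\ Q_{-1} & Q_0 \end{pmatrix}\prod_{j=1}^{i}\begin{pmatrix} 0 & -1 \\ 1 & a_j \end{pmatrix}$, in which each factor has determinant $1$ and the initial matrix $\begin{pmatrix} 0 & 1 \\ -1 & 0 \end{pmatrix}$ also has determinant $1$; the determinant of the left-hand side is then immediately $1$.

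For the continued fraction identity, the key observation is the one-step reduction $[a_1,\dots,a_{i-1},a_i]^- = [a_1,\dots,a_{i-1} - 1/a_i]^-$, interpreted with a possibly rational final entry. Letting $P'_j,Q'_j$ denote the convergent numerators and denominators of the shortened sequence (whose first $i-2$ entries agree with those of the original), the recursion gives $P'_{i-1} = (a_{i-1} - 1/a_i)P_{i-2} - P_{i-3}$ and similarly for $Q'_{i-1}$; multiplying numerator and denominator by $a_i$ yields $P'_{i-1}/Q'_{i-1} = (a_iP_{i-1} - P_{i-2})/(a_iQ_{i-1} - Q_{i-2}) = P_i/Q_i$. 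By the inductive hypothesis applied to the length $i-1$ sequence this common value equals $[a_1,\dots,a_i]^-$.

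The argument is entirely routine and there is no serious obstacle; the only mild caveat is that the inductive step for the continued fraction identity momentarily uses a rational last entry, so one should note in passing that the recursions $P_j = a_jP_{j-1}-P_{j-2}$, $Q_j = a_jQ_{j-1}-Q_{j-2}$ remain formally valid when $a_j$ is rational, so the shortened sequence's convergent formulas are legitimate.
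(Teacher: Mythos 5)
Your proposal is correct and follows essentially the same route as the paper: both prove the determinant identity by the telescoping cancellation $P_{i-1}Q_i - P_iQ_{i-1} = P_{i-2}Q_{i-1} - P_{i-1}Q_{i-2} = \cdots = P_{-1}Q_0 - P_0Q_{-1} = 1$, and both handle the convergent identity by absorbing the last entry into the penultimate one (forming $a_{i-1} - 1/a_i$) and applying the convergent recursion to the shortened, rational-tailed sequence. Your explicit remark that the recursions remain valid for a rational final entry, and the optional matrix packaging of the determinant computation, are welcome clarifications but do not change the substance of the argument.
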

\begin{proof}
These are immediate when $i=1$, so assume they are true for continued fractions of length up to $i$. Writing 
\[ [a_1, \dots, a_{i-1}, a_{i}, a_{i+1}]^- = [a_1, \dots, a_{i-1}, a'_{i}]^- \]
where $a'_{i} = a_i - \frac{1}{a_{i+1}}$, the numerator of the forward convergent of the continued fraction on the right hand side is
\[
P'_i = a'_i P'_{i-1} - P'_{i-2} = (a_i - \frac{1}{a_{i+1}}) P_{i-1} - P_{i-2} = - \frac{1}{a_{i+1}} P_{i-1} + a_i P_{i-1} - P_{i-2} = - \frac{1}{a_{i+1}} P_{i-1} + P_i.
\]
 Similarly the denominator is $Q'_i = - \frac{1}{a_{i+1}} Q_{i-1} + Q_i$.
 Then 
 \[ \frac{P_{i+1}}{Q_{i+1}} = \frac{a_{i+1} P_i - P_{i-1}}{a_{i+1} Q_i - Q_{i-1}} =  \frac{P_i - \frac{1}{a_{i+1} }P_{i-1}}{ Q_i -\frac{1}{a_{i+1} } Q_{i-1}}  = \frac{P'_i}{Q'_i} = [a_1, \dots, a_{i+1}]^-\]
 as desired.    Also
 \begin{multline*}
  P_{i-1} Q_i - P_i Q_{i-1} = P_{i-1}(a_i Q_{i-1} - Q_{i-2}) - (a_i P_{i-1} - P_{i-2}) Q_{i-1} \\= P_{i-2} Q_{i-1} - P_{i-1}Q_{i-2} = \dots = P_{-1} Q_0 - P_0 Q_{-1} = 1. 
  \end{multline*}
 \end{proof}

 Given the continued fraction $[a_k, \dots, a_1]^-$, define the numerators  and denominators of the ``backward'' convergents as follows:  
\[
\begin{array}{lll}
p_{-1} = 0 & p_0 = 1 & p_i = a_i p_{i-1} - p_{i-2} \\
q_{-1} = -1 & q_0 = 0 & q_i = a_{i-1} q_{i-1} - q_{i-2}
\end{array}
\]
\begin{claim}
For $i=1, \dots, k$, $\frac{p_i}{q_i} = [a_i, \dots, a_1]^-$ and $q_i = p_{i-1}$.
\end{claim}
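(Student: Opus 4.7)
My plan is to handle both assertions by induction on $i$, with the identity $q_i = p_{i-1}$ treated first as a preliminary observation about shifted indices.  The defining recursion for $p$ reads $p_j = a_j p_{j-1} - p_{j-2}$, while that for $q$ reads $q_j = a_{j-1} q_{j-1} - q_{j-2}$, i.e., the same recursion with every index shifted down by one.  First I would check the base cases $q_0 = 0 = p_{-1}$ and $q_1 = 1 = p_0$ directly (the value of $a_0$ is irrelevant since it multiplies $q_0 = 0$).  The inductive step is then immediate: for $i \geq 2$,
\[
q_i = a_{i-1} q_{i-1} - q_{i-2} = a_{i-1} p_{i-2} - p_{i-3} = p_{i-1}
\]
by the $p$-recursion.

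With the identity $q_i = p_{i-1}$ in hand, the continued-fraction identity follows by a second induction on $i$.  The base case $i=1$ is just $p_1/q_1 = a_1/1 = [a_1]^-$.  For the inductive step I would apply the recursive definition of a negative continued fraction, $[a_i, \dots, a_1]^- = a_i - 1/[a_{i-1}, \dots, a_1]^-$, together with the inductive hypothesis $[a_{i-1}, \dots, a_1]^- = p_{i-1}/q_{i-1} = p_{i-1}/p_{i-2}$, obtaining
\[
[a_i, a_{i-1}, \dots, a_1]^- = a_i - \frac{p_{i-2}}{p_{i-1}} = \frac{a_i p_{i-1} - p_{i-2}}{p_{i-1}} = \frac{p_i}{p_{i-1}} = \frac{p_i}{q_i},
\]
where the last two equalities use the $p$-recursion and the already-proved offset identity respectively.

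This proof is essentially bookkeeping, so I do not anticipate a genuine obstacle.  The only point requiring any care is the index shift between the two recursions, and in particular that the boundary data $p_{-1}, p_0, q_{-1}, q_0$ line up to produce exactly the offset $q_i = p_{i-1}$ rather than some more awkward shift; establishing $q_i = p_{i-1}$ first lets the continued-fraction calculation proceed without any further manipulation of the $q$-recursion.
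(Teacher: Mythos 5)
Your proof is correct and follows essentially the same route as the paper: establish the index shift $q_i = p_{i-1}$ from the recursions and base data, then obtain the continued-fraction identity from $[a_i,\dots,a_1]^- = a_i - 1/[a_{i-1},\dots,a_1]^-$. The paper runs the two steps as a single simultaneous induction rather than two separate ones, but the content is identical.
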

\begin{proof}
These are immediate when $i=1$, so assume they are true for continued fractions of length up to $i$.
First $q_{i+1} = a_i q_{i} - q_{i-1} = a_i p_{i-1} - p_{i-2} = p_i$.  Then 
\[ [a_{i+1}, a_i, \dots, a_1]^- = a_{i+1} - \frac{1}{[a_i, \dots, a_1]^-} = a_{i+1} - \frac{1}{p_i/p_{i-1}} = \frac{a_{i+1} p_i - p_{i-1}}{p_i} = \frac{p_{i+1}}{q_{i+1}}. \]
\end{proof}

\begin{lemma}
$\displaystyle [b_1, \dots, b_k, c, -b_k, \dots, -b_1]^- = \frac{c P_k^2}{c P_k Q_k + 1}$
\end{lemma}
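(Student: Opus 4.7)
The plan is to reduce the left-hand side to an outer length-$k$ continued fraction with a single symbolic tail $T$, evaluate $T$ by a sign-flip and reversal maneuver, and then collapse the result using the Wronskian-type identity $P_{k-1}Q_k - P_k Q_{k-1} = 1$ from Claim~\ref{claim:forwardP}. First I would record two auxiliary identities, each a one-line induction on length. The \emph{sign-flip identity} $[-a_1, \dots, -a_n]^- = -[a_1, \dots, a_n]^-$ holds because a minus sign passes through each reciprocal. The \emph{reversal identity} $[b_k, b_{k-1}, \dots, b_1]^- = P_k/P_{k-1}$ is a direct consequence of the second claim just above: its recurrence for $p_i$ is formally identical to that for $P_i$, so $p_i = P_i$ as a function of the $b$'s, and $q_k = p_{k-1} = P_{k-1}$.

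Next I would prove a \emph{substitution formula}: if $[b_1,\dots,b_n]^- = P_n/Q_n$, then for any value $T$,
\[
[b_1, \dots, b_n, T]^- = \frac{P_n T - P_{n-1}}{Q_n T - Q_{n-1}},
\]
by induction on $n$ using the $P$-- and $Q$--recurrences. Setting $T = [c, -b_k, \dots, -b_1]^-$ and applying the sign-flip and reversal identities to its tail gives $[-b_k, \dots, -b_1]^- = -P_k/P_{k-1}$, so
\[
T \;=\; c - \frac{1}{[-b_k, \dots, -b_1]^-} \;=\; c + \frac{P_{k-1}}{P_k} \;=\; \frac{cP_k + P_{k-1}}{P_k}.
\]
Plugging this $T$ into the substitution formula at level $n = k$ and clearing the common factor $1/P_k$ from numerator and denominator yields
\[
[b_1, \dots, b_k, c, -b_k, \dots, -b_1]^- \;=\; \frac{cP_k^2}{cP_k Q_k + (P_{k-1}Q_k - P_k Q_{k-1})},
\]
and the parenthesized quantity is $1$ by Claim~\ref{claim:forwardP}, giving the asserted expression.

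There is no serious obstacle here; every ingredient is a routine continued-fraction manipulation. The only real bookkeeping point is that the $P_{k-1}$ appearing in the evaluation of $T$ (via the reversal identity) agrees with the $P_{k-1}$ appearing in the substitution formula, since both are built from the same recurrence on the same sequence $b_1, \dots, b_k$. The crisp final simplification is the recognition that the cross-term $P_{k-1}Q_k - P_k Q_{k-1}$ is precisely the Wronskian-type quantity already established as $1$.
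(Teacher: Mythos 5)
Your proof is correct and follows essentially the same route as the paper: evaluate the reversed negated tail via the backward-convergent claim, feed it into the M\"obius/substitution formula coming from the forward convergents, and collapse the cross-term using $P_{k-1}Q_k - P_kQ_{k-1}=1$. The only cosmetic difference is that you extract the sign of the tail up front via the sign-flip identity, whereas the paper carries it along as $p_i = (-1)^iP_i$ and cancels the $(-1)^k$ factors at the end.
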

\begin{proof}
Notice that for this continued fraction we have $\frac{P_i}{Q_i} = [b_1, \dots, b_i]^-$ and $\frac{p_i}{q_i} = [-b_i, \dots, -b_1]^-$ for $i=1, \dots, k$.  Using the definitions of $P_i$ and $p_i$ one can show that $p_i = (-1)^i P_i$ for $i=1, \dots, k$.  Then we have:
\begin{align*}
[b_1, \dots, b_k, c, -b_k, \dots, -b_1]^- &= [b_1, \dots, b_k, c-\frac{1}{[-b_k, \dots, -b_1]^-}]^- = [b_1, \dots, b_k, c-\frac{p_{k-1}}{p_k}]^- 
= \frac{(c-\frac{p_{k-1}}{p_k})P_k - P_{k-1}}{(c-\frac{p_{k-1}}{p_k})Q_k - Q_{k-1}}\\ 
& =  \frac{c p_k P_k - p_{k-1} P_k - p_k P_{k-1}}{c p_k Q_k - p_{k-1} Q_k - p_k Q_{k-1}}
=  \frac{(-1)^k c P_k P_k -(-1)^{k-1} P_{k-1} P_k - (-1)^k P_k P_{k-1}}{(-1)^k c P_k Q_k - (-1)^{k-1} P_{k-1} Q_k - (-1)^k P_k Q_{k-1}}\\
&= \frac{(-1)^k c P_k^2}{(-1)^k c P_k Q_k + (-1)^k (P_{k-1} Q_k - P_k Q_{k-1})} = \frac{c P_k^2}{c P_k Q_k + 1}
\end{align*}
\end{proof}

\subsection{Homology classes of the duals to the {\sc bg}, {\sc gofk}, and {\sc spor} knots}

We now calculate the homology classes of the knots indicated in Figure~\ref{fig:chainsurgery}.    To do so, orient and index each linear chain link $L = L_1 \cup \dots \cup L_n$ of $n$ components from right to left as in Figure~\ref{fig:chainhomology}.  Denote the exterior of this link by $X(L) = S^3 - N(L)$.  Let $\{\mu_i,\lambda_i\}$ be the standard oriented meridian, longitude pair giving a basis for the homology  of the boundary of a regular neighborhood of the $i$th component, $H_1(\bdry N(L_i))$. Then $H_1(X(L)) = \langle \mu_1, \dots, \mu_n\rangle \cong \Z^n$. Take $\lambda_i$ so that it is represented by the boundary of a meridional disk in $H_1(S^3-N(L_i))$.   Then in $H_1(X(L))$ we have $\lambda_i = \mu_{i-1} + \mu_{i+1}$ for $i \in \{1, \dots, n\}$ where $\mu_0 = \mu_{n+1} = 0$.    Let $L(-a_1, \dots, -a_n)$ denote the lens space obtained by this surgery description on the  chain link $L$ with $-a_i$ surgery on the $i$th component.  The surgery induces the relation $\lambda_i = a_i \mu_i$ for each $i$ and hence the relation $0= \mu_{i-1}- a_i \mu_i + \mu_{i+1}$ in $H_1(X(L))$.  Thus $H_1(L(-a_1, \dots, -a_n)) = \langle \mu_1, \dots, \mu_n \quad \colon \quad \mu_{i-1}- a_i \mu_i + \mu_{i+1}, i \in \{1, \dots, n\}\rangle$.

\begin{lemma}\label{lem:homology}
Let $L(p,q) = L(-a_1, \dots, -a_n)$ be the lens space described by surgery on the $n$ component chain link with surgery coefficient $-a_i$ on the $i$th component so that $\frac{p}{q} = [a_1, \dots, a_n]^-$.  

Then $\mu_{i} = P_{i-1} \mu_1$ in $H_1(L(p,q))$ for each $i=1, \dots, n$.
In particular, $p=P_n$ and $q^{-1}=P_{n-1}$ so that $q \mu_n=\mu_1$.
\end{lemma}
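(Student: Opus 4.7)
The plan is to prove the main identity $\mu_i = P_{i-1}\mu_1$ by induction on $i$, using the three-term recurrence coming from the surgery relations, and then extract the "in particular" statement from the boundary case $i=n+1$.

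First, I would rewrite each surgery relation $0 = \mu_{i-1} - a_i\mu_i + \mu_{i+1}$ as $\mu_{i+1} = a_i \mu_i - \mu_{i-1}$, which is precisely the same recurrence $P_i = a_i P_{i-1} - P_{i-2}$ that defines the forward numerators. The base cases $\mu_0 = 0 = P_{-1}\mu_1$ and $\mu_1 = 1 \cdot \mu_1 = P_0\mu_1$ hold by convention, so a one-line induction gives
\[
\mu_{i+1} = a_i \mu_i - \mu_{i-1} = a_i P_{i-1}\mu_1 - P_{i-2}\mu_1 = P_i \mu_1
\]
for each $i = 0, 1, \dots, n$, which is the main identity.

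For the particular claim, specialize to $i = n+1$: since $\mu_{n+1} = 0$ in $H_1$, we obtain $P_n \mu_1 = 0$. To identify $|P_n|$ with $p$, I would appeal to Claim~\ref{claim:forwardP}, which gives $P_n/Q_n = [a_1,\dots,a_n]^- = p/q$, together with $\gcd(P_n,Q_n) = 1$ via the identity $P_{n-1}Q_n - P_n Q_{n-1} = 1$; since $H_1(L(p,q)) \cong \Z/p\Z$ has order exactly $p$, this forces $P_n = \pm p$ and correspondingly $Q_n = \pm q$. Finally, reducing the identity $P_{n-1}Q_n - P_n Q_{n-1} = 1$ modulo $p$ yields $P_{n-1}\,q \equiv 1 \pmod{p}$, i.e.\ $q^{-1} \equiv P_{n-1}$, so $q\mu_n = qP_{n-1}\mu_1 = \mu_1$ as claimed.

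The only subtlety is sign- and orientation-tracking when identifying $P_n$ with $p$ rather than $\pm p$; because every conclusion lives in the cyclic group $H_1(L(p,q))$ generated by $\mu_1$, a single compatible choice of orientation of the chain link handles all signs at once, so this is not a substantive obstacle. The proof is otherwise a direct matching of two recurrences sharing the same initial data.
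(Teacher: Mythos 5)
Your proof is correct and follows essentially the same route as the paper's: an induction matching the surgery recurrence $\mu_{i+1} = a_i\mu_i - \mu_{i-1}$ against the defining recurrence for $P_i$, followed by an appeal to Claim~\ref{claim:forwardP} (via $P_{n-1}Q_n - P_nQ_{n-1} = 1$) to identify $P_{n-1}$ with $q^{-1}$ modulo $p$. You are somewhat more explicit than the paper about extracting $P_n\mu_1 = 0$ from the boundary convention $\mu_{n+1}=0$ and about the identification $P_n = p$, $Q_n = q$, but this is elaboration rather than a different argument.
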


\begin{proof}
Since $\mu_{2} = a_1 \mu_1 = P_1 \mu_1$ and $\mu_{i+1}=  a_i \mu_i - \mu_{i-1}$, the result follows from the definition of $P_i$ and a simple induction argument.  Assuming this statement is true up through $i$,
\[\mu_{i+1}=  a_i \mu_i - \mu_{i-1} = a_i P_{i-1} \mu_1 - P_{i-2} \mu_1= (a_i P_{i-1} - P_{i-2})\mu_1 = P_i \mu_1.\]
The last statement follows since $P_{i-1} = Q_i^{-1}$ mod $P_i$ by Claim~\ref{claim:forwardP}. 
\end{proof}

\begin{proof}[Proof of Theorem~\ref{thm:mainhomology}]
Figure~\ref{fig:chainsurgery} shows linear chain link surgery descriptions of Lisca's lens spaces with additional unknotted components that describe knots in these lens spaces.
Orient these knots in Figure~\ref{fig:chainsurgery} counter-clockwise.  The homology class of each such knot $K$ may be determined in terms of the meridians of the chain link by counting $\mu_i$ for each time $K$ runs under the $i$th component to the left and counting $-\mu_i$ for each time $K$ runs under the $i$th component to the right.
Applying Lemma~\ref{lem:homology} allows us to write the homology class of $K$ in terms of $\mu_1$.  For the four families of Theorem~\ref{thm:main}, and using its notation, we obtain the following: 

\begin{itemize}

\item[(1)] With $(-a_1, \dots, -a_{4k+1}) = (-b_1, \dots,  -b_{2k}, -1, b_{2k}, \dots, b_{1})$, $\frac{p}{q} = [b_1, \dots, b_{2k}, 1, -b_{2k}, \dots, b_1]^- = \frac{m^2}{q}$ where $m = P_{2k}$, $d=Q_{2k}$ and $q = P_{2k} Q_{2k} + 1 = md+1$.  This gives both that $1-q = -dm$ and that $qm = m \mod p$.  Furthermore $\mu_1 = q \mu_{4k+1}$.

\begin{itemize}
\item[] $[K_\bgi] = -\mu_{2k+1} = -P_{2k} \mu_{1}=-m \mu_{1}=-qm\mu_{4k+1} = -m \mu_{4k+1}$  
\item[] $[K_\gofk] = \mu_1 - \mu_{4k+1} = (1-q) \mu_{1} = -dm \mu_1 = -dqm \mu_{4k+1} = -dm \mu_{4k+1}$
\end{itemize}

\item[(2)] With $(-a_1, \dots, -a_{2k+1}) = (-b_{1}, -b_{2}, \dots, -b_{k}, -4, b_k, \dots, b_k)$, $\frac{p}{q} =[b_1, \dots, b_k, 4, -b_k, \dots, -b_1] = \frac{m^2}{q}$ where $m = 2P_k$, $d=2Q_k$, and $q = 4 P_k Q_k +1=md+1$.
This gives that $qm = m \mod p$.  Furthermore $\mu_1 = q \mu_{4k+1}$.

\begin{itemize}
\item[] $[K_\bgii] =  \mu_k - 2\mu_{k+1} + \mu_{k+2} = (P_{k-1} - 2 P_{k} + P_{k+1}) \mu_{1} = (P_{k-1} - 2 P_{k} + (4 P_k - P_{k-1})) \mu_{1}  = 2 P_{k} \mu_{1} = m \mu_{1}=qm\mu_{4k+1} = m \mu_{4k+1}$ 
\end{itemize}

\item[(3)] 
With $(-a_1, \dots, -a_6) = (t + 1, -s - 2, -2, -t - 2, -2, s + 1)$, $\frac{p}{q} =[-t-1, s+2, 2, t+2,2, -s-1]^-= \frac{(4 + 3 t + 2 s  + 2 s t)^2 }{ -(3 + 2 s)^2 (1 + t)}$.  Then take $m=4+3t+2s+2st$ so that $p=m^2$ and $d =-(3+2s)$ so that  $q=d(m-1)$.   This gives that $d$ is odd,  that $(m-1)=-d(1+t)$ from which $d^{-1}m =  (1+t)m \mod p$, and that $qm = -dm \mod p$.  Furthermore $\mu_1 = q \mu_6$.

\begin{itemize}
\item[] $[K_\bgiii]= \mu_1 + \mu_4 = (1+P_3) \mu_1 =-(4 + 3 t + 2 s  + 2 s t) \mu_1=-m\mu_1 = -qm \mu_6 = dm \mu_6$ and
\item[] $[K_\bgv]= \mu_3 - \mu_5 = (P_2 - P_4) \mu_1 = (1 + t) (4 + 3 t + 2 s + 2 s t) \mu_1 =  (1 + t) m \mu_1  = d^{-1}m \mu_1 = d^{-1}qm \mu_6 = -m \mu_6$
\end{itemize}

When $t=1$ we have a third knot.  Then $(-a_1, \dots, -a_6) = (2, -s - 2, -2, -3, -2, s + 1)$, $\frac{p}{q} =[-2, s+2, 2, 3, 2, -s-1]^-= \frac{(7+ 4 s)^2 }{ -2(3 + 2 s)^2 }$, and $m=1-2d$.

\begin{itemize}
\item[] $[K_\bgiii]=-(7+4s)\mu_1=-m \mu_1 = dm \mu_6$,
\item[] $[K_\bgv]=2(7+4s) \mu_1 = 2m\mu_1=-2dm \mu_6 = (m-1)m\mu_6 = -m\mu_6$, and
\item[] $[K_\spor]= \mu_2 - \mu_6 = (P_1-P_5)\mu_1=4(7+4s) \mu_1= 4m \mu_1=-2m\mu_6$. 
\end{itemize}

\item[(4)] With $(-a_1, \dots, -a_6) = (t + 1, -2, -s - 2, -t - 2, -2, s + 1)$, $\frac{p}{q} = [-t-1,2,s+2,t+2,2,-s-1]^-=\frac{(4 + 3s + 3 t + 2 s t)^2}{-(3 + 2 s) (3 + 3 s + 3 t + 2 s t)}$.  Then take $m=4 + 3s + 3 t + 2 s t$ so that $p=m^2$ and $d=-(3+2s)$ so that $q=d(m-1)$.  This gives both that $(2m+1) = -d(3+2t)$ from which $d^{-1}m = -(3+2t)m \mod p$ and that $qm=-dm \mod p$.  Furthermore $\mu_1 = q \mu_6$.

\begin{itemize}
\item[] $[K_\bgiv]= \mu_1 + \mu_4=(1+P_3)\mu_1=- (4 + 3s + 3 t + 2 s t)\mu_1=-m\mu_1 = -qm \mu_6 = dm \mu_6$
\item[] $[K'_\bgiv]= \mu_3 + \mu_6=(P_2+P_5) \mu_1 =-(3 + 2 t) (4 + 3s + 3 t + 2 s t) \mu_1=d^{-1}m\mu_1= d^{-1}qm \mu_6 = -m \mu_6$.
\end{itemize}

 \end{itemize}

Let $\mu$ and $\mu'$ be the homology classes of the two cores of the Heegaard solid tori of $L(p,q)$ suitably oriented so that $q\mu = \mu'$. Then, we have $\mu = \mu_{4k+1}$ for (1) and (2) and $\mu = \mu_6$ for (3) and (4) in the calculations above.  Since a knot's orientation does not effect its Dehn surgeries, taking both signs of the homology classes above competes the proof of Theorem~\ref{thm:mainhomology}.
\end{proof}

\begin{figure}
\centering
\includegraphics[width=5in]{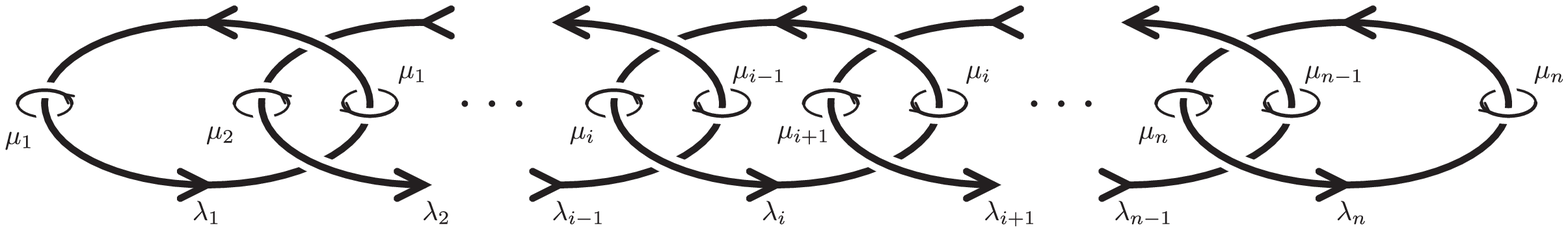}
\caption{}
\label{fig:chainhomology}
\end{figure}

\begin{lemma}\label{lem:spor}
The {\sc spor} knots generically are not Berge-Gabai knots.  

In particular, when $t=1$ and $n=s+2 \neq -1, 0$ the surgery duals to {\sc spor}, {\sc bgiii}, {\sc bgv} are mutually distinct.  When $t=1$ and $n=s+2 = 0$, these knots are all the unknot in $S^3$.  When $t=1$ and $n=s+2 = -1$ the knots {\sc spor} and {\sc bgiii} are isotopic but distinct from {\sc bgv}. 
\end{lemma}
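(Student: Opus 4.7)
The plan is to compare the three homology classes in their common lens space using the simple-knot correspondence of Theorem~\ref{thm:simplewave}. Since the duals of doubly primitive knots are simple knots, and two simple knots in $L(p,q)$ are isotopic (as unoriented knots) iff their classes in $H_1(L(p,q)) = \Z/p$ agree up to sign, distinctness of the three knots reduces to distinctness of three elements of $\Z/m^2$ up to sign.

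First I would extract the relevant homology data from the proof of Theorem~\ref{thm:mainhomology}. In case~(3) with $t=1$, i.e.\ $m = 1-2d$, all three surgery duals lie in $L(m^2,q)$ with $q=d(m-1)$, and the computations there yield
\[ [K_{\bgiii}] \equiv \pm dm, \qquad [K_{\bgv}] \equiv \pm m, \qquad [K_{\spor}] \equiv \pm 2m \pmod{m^2}. \]
Reparametrize by $n=s+2$, so $m = 4n-1$ and $d = 1-2n$.

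Next I would compare the three orbits $\{\pm dm\}$, $\{\pm m\}$, $\{\pm 2m\}$ in $\Z/m^2$. Each pairwise coincidence is an elementary divisibility statement: for instance, $\{\pm dm\} = \{\pm m\}$ iff $m \mid (d\pm 1)$, equivalently $(4n-1)\mid 2n$ or $(4n-1) \mid 2(n-1)$; similarly for the other two pairs. A short case check reveals such divisibilities can occur only for a handful of small $|n|$. Among these, $n=0$ gives $m=-1$, so $L=S^3$ and all three knots are trivial, and $n=-1$ gives $m=-5$: modulo $25$ one finds $\{\pm dm\} = \{\pm 2m\} = \{10,15\}$ while $\{\pm m\} = \{5,20\}$, so $K_{\spor}$ and $K_{\bgiii}$ coincide as simple knots while $K_{\bgv}$ is distinct, matching the statement of the lemma. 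For all other $n$, direct verification shows the three orbits are pairwise distinct, which proves the ``in particular'' clause.

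For the broader first sentence, I would further check that any Berge-Gabai dual sitting in the lens space $L(m^2,d(m-1))$ associated with the {\sc spor} family must be one of $K_{\bgi}, \dots, K_{\bgv}$. Coincidences with the remaining families {\sc bgi}, {\sc bgii}, {\sc bgiv} would then force $q$ to simultaneously admit a second presentation of the form $md'+1$ or $d'(m-1)$ with $d' \mid (2m+1)$, which holds for only a sparse set of parameters; one then compares the relevant homology orbits as above. The main obstacle is this final enumeration of lens-space coincidences together with the careful mod-$m^2$ arithmetic around the small-$n$ exceptional values; beyond that, once the homology formulas from Theorem~\ref{thm:mainhomology} are in hand, the comparisons are entirely elementary.
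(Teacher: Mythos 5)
Your route to the ``in particular'' clause is essentially the paper's: read off the classes $\pm dm$, $\pm m$, $\pm 2m$ of the three duals from Theorem~\ref{thm:mainhomology}, reparametrize by $n=s+2$ (so $m=4n-1$, $d=1-2n$, $L=L((4n-1)^2,8n^2-1)$), and compare the classes in $\Z/m^2$ up to sign, treating $n=0,-1$ separately. Your use of the uniqueness of simple knots to conclude that the {\sc spor} and {\sc bgiii} duals genuinely coincide at $n=-1$ is cleaner than the paper's ``one may directly observe.'' However, you have omitted a step the paper relies on: it invokes Bonahon and Hodgson--Rubinstein to see that, since $q^2\not\equiv\pm1\pmod{p}$ for $n\neq -1,0$, the mapping class group of $L(m^2,q)$ is $\Z/2$ acting on $H_1$ by $-1$. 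Distinctness of unoriented homology classes only rules out isotopy; to conclude that a {\sc spor} knot is not a Berge--Gabai knot (the first sentence, and the use made of this lemma in Theorem~\ref{thm:conj}) one must exclude a homeomorphism of the lens space carrying one dual to another, and a self-homeomorphism can a priori act on $H_1$ by multiplication by $\pm q^{\pm1}$. This is not a hypothetical worry here: $qm\equiv -dm\pmod{m^2}$, so multiplication by $q$ carries the {\sc bgv} class precisely to the {\sc bgiii} class. Without the $q^2\not\equiv\pm1$ computation, comparing classes up to sign does not separate the knots in the sense the lemma is used.

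A second, smaller point: your assertion that ``for all other $n$ the three orbits are pairwise distinct'' fails at $n=1$. There $m=3$, $d=-1$, the lens space is $L(9,7)$, and $\pm m$, $\pm dm=\mp3$, and $\pm 2m=\pm 6\equiv\mp3$ all give the single unoriented class $\{3,6\}$, so all three duals are the simple knot $K(9,7,3)$. Your own divisibility criteria detect this: $(4n-1)\mid 2(n-1)$ forces $(4n-1)\mid 3$, hence $n\in\{0,1\}$; the case check was simply not carried through. In fairness, the paper's proof asserts mutual distinctness for all $n\neq -1,0$ without recording this check either, so this looks like an exceptional case overlooked in the lemma as stated rather than a defect peculiar to your approach --- but as written your ``direct verification'' step would fail there. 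Finally, your sketch for the first sentence (enumerating which other {\sc bg} families could share the lens space and comparing their homology orbits) goes beyond what the paper actually writes down; the paper only compares against the {\sc bgiii} and {\sc bgv} duals living in the same surgery presentation.
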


\begin{proof}
Up to mirroring, the lens space obtained by longitudinal surgery on a sporadic knot is $L(p,q) =L((7+ 4 s)^2 , -2(3 + 2 s)^2)$.  Let us reparametrize by $s=n-2$ so that $L(p,q) = L((4n-1)^2, -2(2n-1)^2) = L((4n-1)^2, 8n^2-1)$.  Again, we take $\mu$ and $\mu'$ to be the homology classes of the oriented cores of the Heegaard solid tori so that $q\mu=\mu'$. Then by Theorem~\ref{thm:mainhomology} the unoriented knot dual to the sporadic knot represents the homology classes  $\pm2(4n-1)\mu$ while the duals to the \bgiii\ and \bgv\ knots in this lens space represent the homology classes $\pm(2n-1)(4n-1) \mu$ and $\pm(4n-1)\mu$.
Since $q^2 \not \equiv \pm1 \pmod p$ for $n \neq -1, 0$, the group of isotopy classes of diffeomorphisms of our lens space is $\Z/2\Z$, generated by the involution whose quotient is the two bridge link, \cite{diffeolenticulaires,diffeooflensspace}.  This involution acts on $H_1(L(p,q))$ as multiplication by $-1$.  Therefore when $n\neq -1, 0$  (and when $s \neq -3, -2$) the duals to {\sc bgiii}, {\sc bgv}, and {\sc spor} are mutually non-isotopic.

For $n=0$, $L(p,q) =S^3$ so that the knot dual to {\sc spor}, {\sc bgiii}, and {\sc bgv} are all the unknot.

For $n=-1$, $L(p,q)=L(25,7)$ the knots dual to {\sc spor} and {\sc bgiii} represent the homology classes $\pm10\mu$.  One may directly observe that the corresponding knots are isotopic.  The knot dual to {\sc bgv} represents the homology classes $\pm5\mu$.  The knots in $S^1 \times S^2$ with integral surgeries yielding $L(25,7)$ are those shown to the left and center in Figure~\ref{fig:smallhyperbolicknots}.   
\end{proof}

\begin{cor}
The three bandings of the two-bridge links in Figure~\ref{fig:quotients} are distinct up to homeomorphism of the two-bridge link.
\end{cor}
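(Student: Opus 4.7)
The plan is to apply the Montesinos trick in reverse and reduce the statement about bandings to the homology computation already established in Lemma~\ref{lem:spor}. A banding of a two-bridge link $K(p,q)$, viewed up to homeomorphism of the pair (ambient $S^3$, link), lifts through the double cover branched over the link to a pair $(S^1\times S^2, \tilde K)$, where $\tilde K$ is a strongly invertible knot, together with a distinguished longitudinal slope on $\tilde K$ whose surgery yields the lens space double branched cover of the post-banding link. If two of the three bandings in Figure~\ref{fig:quotients} were equivalent via a self-homeomorphism of $K(p,q)$, the lift of this homeomorphism would carry the associated strongly invertible knot pairs to one another in $S^1 \times S^2$, and in particular would induce a homeomorphism of the surgered lens space carrying one surgery dual to the other.

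First I would make this lifting argument precise, invoking that in this specific situation (the two-bridge link involved), the branched double cover is $S^1\times S^2$ and the longitudinal surgery on $\tilde K$ produces the lens space $L(p,q)$ encoded by the post-banding two-bridge link. Second, I would observe the standard consequence: ambient isotopic knots in $L(p,q)$ represent the same element of $H_1(L(p,q))$, and the full homeomorphism group of $L(p,q)$ acts on $H_1$ through multiplication by an element of $\{\pm 1\}$ in the generic case covered by Lemma~\ref{lem:spor} (this is precisely the $q^2 \not\equiv \pm 1 \pmod p$ regime cited there from \cite{diffeolenticulaires,diffeooflensspace}).

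The third step is then just to quote Lemma~\ref{lem:spor}: the three lens space surgery duals coming from {\sc bgiii}, {\sc bgv}, and {\sc spor} in this common lens space represent three pairwise distinct $\pm$-classes in $H_1(L(p,q))$ in the generic range of the parameter. Since equivalence of bandings forces agreement of these classes up to sign, we obtain the desired inequivalence of the three bandings.

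The only real subtlety I anticipate is bookkeeping rather than mathematics: one must verify that the three bandings of Figure~\ref{fig:quotients} are indeed the quotients of the three surgery descriptions drawn in Figure~\ref{fig:chainsurgery} for {\sc bgiii}, {\sc bgv}, and {\sc spor} (with matched parameters so that all three produce the \emph{same} two-bridge link $K(p,q)$), as this identification is what allows us to appeal to Lemma~\ref{lem:spor}. This is established by the appendix material referenced in Section~\ref{sec:chaintotangle}, and once it is invoked the proof is essentially a one-line application of the lemma.
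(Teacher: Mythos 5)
Your proposal is correct and takes essentially the same approach as the paper: the corollary is stated there without proof precisely because it is the translation of Lemma~\ref{lem:spor} through the branched double cover via the Montesinos trick, which is exactly the reduction you spell out. The only caveat is that the distinctness of the three surgery duals (and hence of the bandings) holds in the generic range $n \neq -1, 0$ of Lemma~\ref{lem:spor}, a restriction you correctly flag.
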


\section{Doubly primitive knots, waves, and simple knots}
\label{sec:simplewave}

We now generalize Berge's results that the duals to doubly primitive knots in $S^3$ (under the associated lens space surgery) are simple knots and that $(1,1)$--knots with longitudinal $S^3$ surgeries are simple knots.  We will adapt Saito's proofs given in the appendix of \cite{saito}.

 A {\em wave} of a genus $2$ Heegaard diagram $(S, \bar{x}= \{x_1, x_2\}, \bar{y}=\{y_1, y_2\})$ is an arc $\alpha$ embedded in $S$ so that (up to swapping $x$'s and $y$'s) $\alpha \cap \bar{x} = \bdry \alpha \subset x_i$ for  $i=1$ or $2$, at each endpoint $\alpha$ encounters $x_i$ from the same side, and each component of $x_i - \alpha$ intersects $\bar{y}$.  A regular neighborhood of $\alpha \cup x_i$ is a thrice-punctured sphere of which one boundary component is not isotopic to a member of $\bar{x}$.  A {\em wave move} along $\alpha$ is the replacement of $x_i$ by this component.

Let us say two simple closed curves on an orientable surface {\em coherently intersect} if they may be oriented so that every intersection occurs with the same sign. (This includes the possibility that the two curves are disjoint.) We then say a Heegaard diagram is {\em coherent} if every pair of curves in the diagram coherently intersect.  

Say a $3$--manifold $W$ of Heegaard genus at most $2$ is {\em wave-coherent} if any genus $2$ Heegaard diagram $(S, \bar{x}= \{x_1, x_2\}, \bar{y}=\{y_1, y_2\})$ of $W$ either admits a wave move or  is coherent.

\begin{thm}\label{thm:wavesimplesurgery}\
\begin{enumerate}
\item If longitudinal surgery on a $(1,1)$--knot in a lens space  produces a wave-coherent manifold, then the knot is simple.
\item Given a doubly primitive knot in a wave-coherent manifold of Heegaard genus at most $2$, the surgery dual to the associated lens space surgery is a simple knot.
\end{enumerate}
\end{thm}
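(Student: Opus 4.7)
The plan is to reduce part (2) to part (1) and then prove (1) by an induction on the complexity of a genus $2$ Heegaard diagram of the surgered manifold, following the outline Saito gives in the appendix of \cite{saito} for Berge's original $S^3$ argument.

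For the reduction of (2) to (1), let $K$ be doubly primitive in the wave-coherent manifold $M$, lying on a genus $2$ Heegaard surface $\Sigma$ with primitive disks on the two sides. Surgery along the slope induced by $\Sigma$ compresses $\Sigma$ across each primitive disk together with the meridian of the new solid torus, yielding a genus $1$ Heegaard torus of a lens space $L$ on which the surgery dual $K^*$ sits in $1$-bridge position. Since longitudinal surgery on $K^*$ recovers $M$, which is wave-coherent by hypothesis, part (1) then gives that $K^*$ is simple.

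For (1), take $K \subset L = V_\alpha \cup V_\beta$ in $(1,1)$-position, with boundary-parallel arcs $a_\gamma := K \cap V_\gamma$. Each $H_\gamma := V_\gamma - N(a_\gamma)$ is a genus $2$ handlebody, and together they give a genus $2$ Heegaard splitting of the exterior $L - N(K)$ along some surface $\Sigma$. Attaching a $2$-handle along the longitude of $K$ and capping off produces $W$, and a standard manipulation yields a genus $2$ Heegaard diagram $\mathcal{H} = (\Sigma, \{x_1, x_2\}, \{y_1, y_2\})$ of $W$ whose curves are built from meridians of $V_\alpha, V_\beta$ together with curves recording the arcs $a_\gamma$ and the surgery slope. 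By the wave-coherence of $W$, this $\mathcal{H}$ either is coherent or admits a wave move. In the coherent case I would show that coherence forces a choice of meridional disks for $V_\alpha$ and $V_\beta$ whose boundaries intersect minimally on the common torus $V_\alpha \cap V_\beta$ and are disjoint from the arcs $a_\alpha$ and $a_\beta$, which is exactly the defining property of $K$ being simple. In the wave case, a wave move corresponds to an isotopy of the $(1,1)$-presentation of $K$ that strictly decreases the total number of intersections among the defining curves while preserving the $(1,1)$-structure, so induction on this complexity reduces to the coherent case.

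The main obstacle, and the ``technical issue'' alluded to in the excerpt, is ensuring that every wave on $\mathcal{H}$ can be realized as a genuine modification of the $(1,1)$-data of $K$ rather than as a move that destroys the special role played by the surgery-generated curves. Saito treats the analogous point in the $S^3$ case by a local analysis of how a wave meets the curves coming from the surgery arc; that analysis should carry over verbatim here once we replace the $S^3$-specific appeal to Homma--Ochiai--Takahashi (or to Theorem~\ref{thm:negamiokita} in the $S^1 \times S^2 \# L(p,q)$ setting) by the wave-coherence hypothesis on $W$, since the only role of those theorems in Saito's argument is to guarantee the dichotomy between coherent diagrams and diagrams admitting a wave.
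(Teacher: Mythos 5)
Your overall architecture matches the paper's: part (2) is reduced to part (1) via the observation that the surgery dual of a doubly primitive knot is a $(1,1)$--knot, and part (1) is run through Saito's appendix argument with wave-coherence supplying the wave/coherent dichotomy. However, there is a genuine gap in the final paragraph, and it sits exactly at the point you label ``the technical issue.'' You assert that the only role of the Homma--Ochiai--Takahashi theorem (resp.\ Theorem~\ref{thm:negamiokita}) in Saito's argument is to guarantee that a non-coherent diagram admits a wave, so that the rest of his analysis ``carries over verbatim.'' That is not the case. Saito's key step (his Proposition~A.1) rests on Ochiai's structure theorem for Whitehead graphs of genus $2$ Heegaard diagrams, and that structure theorem requires the diagram to have the Non-Empty Intersecting (NEI) property: every $x_i$ meets some $y_j$ and vice versa. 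For $S^3$ (indeed for any homology sphere) the NEI property is automatic, by Ochiai's Lemma~1, so Saito never has to address its failure. For a general wave-coherent manifold --- and in particular for $S^1 \times S^2$, which is the case this theorem is built to cover --- a genus $2$ Heegaard diagram can fail the NEI property, and then Ochiai's structure theorem simply does not apply. Your proposed induction therefore has no engine in that case.

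The paper fills this hole with Proposition~\ref{prop:coherentintersect}, which replaces Saito's Proposition~A.1: when the diagram fails NEI, the manifold contains a non-separating sphere, hence is $S^1 \times S^2 \# L(p,q)$; one then uses the auxiliary curve $z$ (coming from the $(1,1)$--structure, meeting $x_1$ and $y_1$ once and missing $x_2$ and $y_2$) to pin down the combinatorics of the diagram directly and conclude that $x_2$ and $y_2$ intersect coherently anyway. To complete your proof you would need to supply this non-NEI analysis (or an equivalent substitute); as written, the claim that Saito's local analysis transfers verbatim is false precisely where the new content of the theorem lives.
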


\begin{proof}
The proof of the first follows exactly the same as that of Saito's Theorem~A.5 (with Lemma~A.6) in \cite{saito} except that we use Proposition~\ref{prop:coherentintersect} below in the stead of his Proposition~A.1.

The second item then follows because the surgery dual to a doubly primitive knot is a $(1,1)$--knot.  See Theorem~A.4 \cite{saito} for example.
\end{proof}

\begin{cor}
A $3$--manifold of genus at most $2$ obtained by longitudinal surgery on a non-trivial $(1,1)$--knot in $S^3$ or $S^1 \times S^2$ is not wave-coherent.
\end{cor}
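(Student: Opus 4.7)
The plan is to argue by contradiction using Theorem~\ref{thm:wavesimplesurgery}(1). In the conventions of this paper, $S^3 = L(1,0)$ and $S^1 \times S^2 = L(0,1)$ are both lens spaces, so any $(1,1)$--knot $K$ in $N \in \{S^3, S^1 \times S^2\}$ is in particular a $(1,1)$--knot in a lens space, and the hypotheses of Theorem~\ref{thm:wavesimplesurgery}(1) are met with $N$ playing the role of the lens space.

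Suppose, toward a contradiction, that the longitudinal surgery $M$ is wave-coherent. Theorem~\ref{thm:wavesimplesurgery}(1) then forces $K$ to be a simple knot in $N$. The next step is to invoke the classification of simple knots recorded earlier in the paper: each torsion homology class of a lens space contains exactly one simple knot, and no simple knot represents a non-torsion class of $S^1 \times S^2$. For $N = S^3$ we have $H_1(S^3) = 0$; for $N = S^1 \times S^2$ we have $H_1(N) = \Z$ whose only torsion class is $0$. In either case the unique simple knot is the unknot, contradicting the hypothesis that $K$ is non-trivial.

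The main obstacle for this corollary is really not present: once Theorem~\ref{thm:wavesimplesurgery}(1) is in hand the deduction is a one-line combination of that theorem with the sparse classification of simple knots in $S^3$ and $S^1 \times S^2$. The only implicit point worth noting is that ``wave-coherent'' is a meaningful notion for $M$ because $M$ has Heegaard genus at most $2$; this is automatic since the $(1,1)$--decomposition of $K$ in $N$ extends through the longitudinal Dehn filling to exhibit a genus-$2$ Heegaard splitting of $M$, so the ``genus at most $2$'' clause in the statement is either a clarification or a mild redundancy and does not affect the argument.
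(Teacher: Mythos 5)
Your argument is correct and is essentially identical to the paper's own proof: apply Theorem~\ref{thm:wavesimplesurgery}(1) with $S^3$ or $S^1\times S^2$ playing the role of the lens space, and conclude from the fact that the trivial knot is the only simple knot in either of these manifolds. The paper's proof is just a two-line version of the same reasoning, so no further comparison is needed.
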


\begin{proof} 
Theorem~\ref{thm:wavesimplesurgery} applies even if the $(1,1)$--knot is in $S^3$ or in $S^1 \times S^2$.  The trivial knot is the only simple knot in these two manifolds.
\end{proof}

\begin{proof}[Proof of Theorem~\ref{thm:simplewave}]
$S^1 \times S^2$ is wave-coherent by Theorem~\ref{thm:negamiokita} (\cite{negamiokita}) so the result follows from Theorem~\ref{thm:wavesimplesurgery}.
\end{proof}

\begin{prop}[Cf.\ Proposition A.1 \cite{saito}]\label{prop:coherentintersect}
Let $(S; \bar{x}= \{x_1, x_2\}, \bar{y}=\{y_1, y_2\})$ be a normalized Heegaard diagram of a $3$--manifold $W$.
Assume $z$ is a simple closed curve in $S$ such that $z$ intersects each $x_1$ and $y_1$ once and is disjoint from both $x_2$ and $y_2$.
If $W$ is wave-coherent, then $x_2$ and $y_2$ coherently intersect.
\end{prop}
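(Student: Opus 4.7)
The plan is to induct on the intersection complexity $c = |\bar x \cap \bar y|$ of the Heegaard diagram, using the wave-coherence of $W$ as the sole manifold-theoretic input, closely following the scheme of Saito's appendix in \cite{saito}.

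The base of the induction is immediate from wave-coherence: the diagram $(S, \bar x, \bar y)$ is either coherent---in which case $x_2$ and $y_2$ coherently intersect by the very definition of coherence---or it admits a wave $\alpha$. Thus we may assume a wave exists, and proceed to the inductive step.

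The inductive step is to execute the wave move along $\alpha$, producing a new genus-two Heegaard diagram $(S, \bar x', \bar y')$ of the same $W$ with $c' < c$, and then invoke the inductive hypothesis on the new diagram. The central auxiliary claim is that the hypothesis on $z$ survives the wave: there is a simple closed curve $z'$ on $S$ (a controlled modification of $z$) that again meets $x_1'$ and $y_1'$ transversely once each and is disjoint from $x_2'$ and $y_2'$, up to a possible swap of labels $1 \leftrightarrow 2$ on the side of the wave. Once this is established, in the cases where the wave fixes $x_2$ and $y_2$ the desired coherence of $x_2, y_2$ follows directly from the inductive hypothesis; when the wave instead acts on $x_2$ or on $y_2$, one recovers coherence of $x_2, y_2$ from coherence of the handle-slid pair by a bigon analysis carried out inside the pair-of-pants neighborhood $P$ of the wave.

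The main obstacle is the persistence of $z$ under the wave move. Let $w \in \bar x \cup \bar y$ be the curve supporting the wave arc $\alpha$, let $P = N(w \cup \alpha)$ be the pair-of-pants neighborhood used by the wave, and let $w^*$ be the partner of $w$ in its cut system. The homological identity $[w'] = [w] \pm [w^*]$ in $H_1(S)$ modulo the opposite Lagrangian, together with the hypothesized intersection counts of $z$ with $\bar x \cup \bar y$, forces $z \cdot w'$ to be $0$ or $\pm 1$ in algebraic intersection. Normalization of the diagram together with a bigon-removal argument should then promote this algebraic count to the required geometric intersection number, possibly after modifying $z$ by banding along a subarc of $\alpha \subset P$ or swapping the labels $1 \leftrightarrow 2$ on the affected side. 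The most delicate sub-case is when $\alpha$ meets $z$ in a nontrivial pattern; this runs parallel to Saito's Lemma~A.6, where the role of Homma-Ochiai-Takahashi for $S^3$ is now played by our wave-coherence hypothesis.
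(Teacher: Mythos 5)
Your proposal takes a different route from the paper's, and two of its steps have genuine gaps. The paper does not induct on wave moves: it splits on whether the diagram has the Non-Empty Intersecting (NEI) property (every $x_i$ meets some $y_j$ and vice versa). In the NEI case it simply invokes Saito's original argument, whose engine is Ochiai's structure theorem for Whitehead graphs of genus $2$ diagrams with the NEI property \cite{ochiai}; that argument carries over verbatim once ``$S^3$ is wave-coherent'' is replaced by the hypothesis that $W$ is. The genuinely new content is the non-NEI case: there $W$ acquires a non-separating sphere, so $W\cong S^1\times S^2\# L(p,q)$; the curve $y_0$ disjoint from $x_1\cup x_2$ is forced to be $y_1$ (because $y_0\cup x_1\cup x_2$ bounds a pair of pants and $z$ meets $x_1$ once while missing $x_2$ and $y_2$), and the resulting constrained diagram (Figure~\ref{fig:noNEIdiagram}) forces $x_2$ and $y_2$ to meet coherently. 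Your proposal never confronts the possibility that a curve of one cut system is disjoint from the entire other cut system; this is precisely the ``technical issue one ought mind'' that prevents quoting Berge/Saito directly, since Ochiai's structure theorem is unavailable without NEI.

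Independently of that, two steps of your induction are unjustified and look false as stated. First, persistence of $z$: the wave arc $\alpha$ is only required to avoid the diagram curves, not $z$, so the new curve $\delta\cup\alpha$ may meet $z$ many times; worse, if the wave is based at $x_1$, the wave move keeps only one of the two arcs $\delta$ of $x_1$ cut off by $\partial\alpha$, and if the single point of $z\cap x_1$ lies on the discarded arc then $z$ becomes disjoint from all of the new $\bar{x}$ --- no relabelling $1\leftrightarrow 2$ restores the hypothesis, and the homological identity $[w']=[w]\pm[w^*]$ controls only algebraic, not geometric, intersection numbers. Second, the descent: when the wave is based at $x_2$, the new curve meets $y_2$ only in $\delta\cap y_2$, which omits every point of $x_2\cap y_2$ on the discarded arc (nonempty intersection with $\bar{y}$ is guaranteed there by the definition of a wave). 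Coherence of the surviving intersections says nothing about the signs of the discarded ones, which need not lie in bigons, so a bigon analysis inside the pair of pants cannot recover coherence of the original pair $x_2,y_2$.
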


\begin{proof}[Sketch of Proof]
Saito's proof of the analogous theorem for $W=S^3$ applies to any wave-coherent manifold of genus at most $2$ whose genus $2$ Heegaard diagrams enjoy the NEI Property:
A Heegaard diagram $(S;\bar{x},\bar{y})$ is said to have the {\em Non-Empty Intersecting (NEI) Property} if every $x_i \in \bar{x}$ intersects some $y_j \in \bar{y}$ and every $y_j \in \bar{y}$ intersects some $x_i \in \bar{x}$. 
Any genus $2$ Heegaard diagram for $S^3$ (or any homology sphere) enjoys the NEI Property by \cite[Lemma 1]{ochiai}, and $S^3$ is wave-coherent by \cite{hot}.  The main tool is Ochiai's structure theorem for Whitehead graphs of genus $2$ Heegaard diagrams with the NEI Property, \cite[Theorem 1]{ochiai}.

Assume $(S, \bar{x}, \bar{y})$ does not enjoy the NEI Property.
Then the manifold $W$ contains a non-separating sphere and hence an $S^1 \times S^2$ summand.  It follows that $W$ is homeomorphic to $S^1\times S^2 \# L(p,q)$ for some integer $p$.  All such manifolds are all wave-coherent by \cite[Theorem 1-4]{negamiokita}.

If $(S, \bar{x}, \bar{y})$ is a standard Heegaard diagram for $W\cong S^1\times S^2 \# L(p,q)$, then it is simple and the proposition is satisfied, so further assume the diagram is not standard.  Assume $y_0 \in \{y_1,y_2\}$ does not intersect $x_1 \cup x_2$. Then since the diagram is not standard, $y_0$ cannot be parallel to either $x_1$ or $x_2$.  Because $y_0$ is non-separating, $y_0 \cup x_1 \cup x_2$ must be the boundary of thrice-punctured sphere in $S$.  Since $z$ intersects $x_1$ just once and is disjoint from $x_2$, it must also intersect $y_0$.  Therefore $y_0=y_1$.  Hence the Heegaard diagram with $z$ must appear as in Figure~\ref{fig:noNEIdiagram} after gluing $x_i^+$ to $x_i^-$ for each $i=1,2$ to reform $S$.  The thick arcs labeled $a$ and $b$ represent sets of $a$ or $b$ parallel arcs of $y_2 - (x_1 \cup x_2)$.  Because $z$ intersects $x_1$ once, it dictates how the ends of the rest of the arcs encountering $x_1$ must match up.  Since these other arcs all together constitute the single curve $y_2$, we must have either $b=1$ and $a=0$ or $b=0$ and $a>0$.  In either case the conclusion of the proposition holds.
\end{proof}

\begin{figure}
\centering
\includegraphics[height=1.5in]{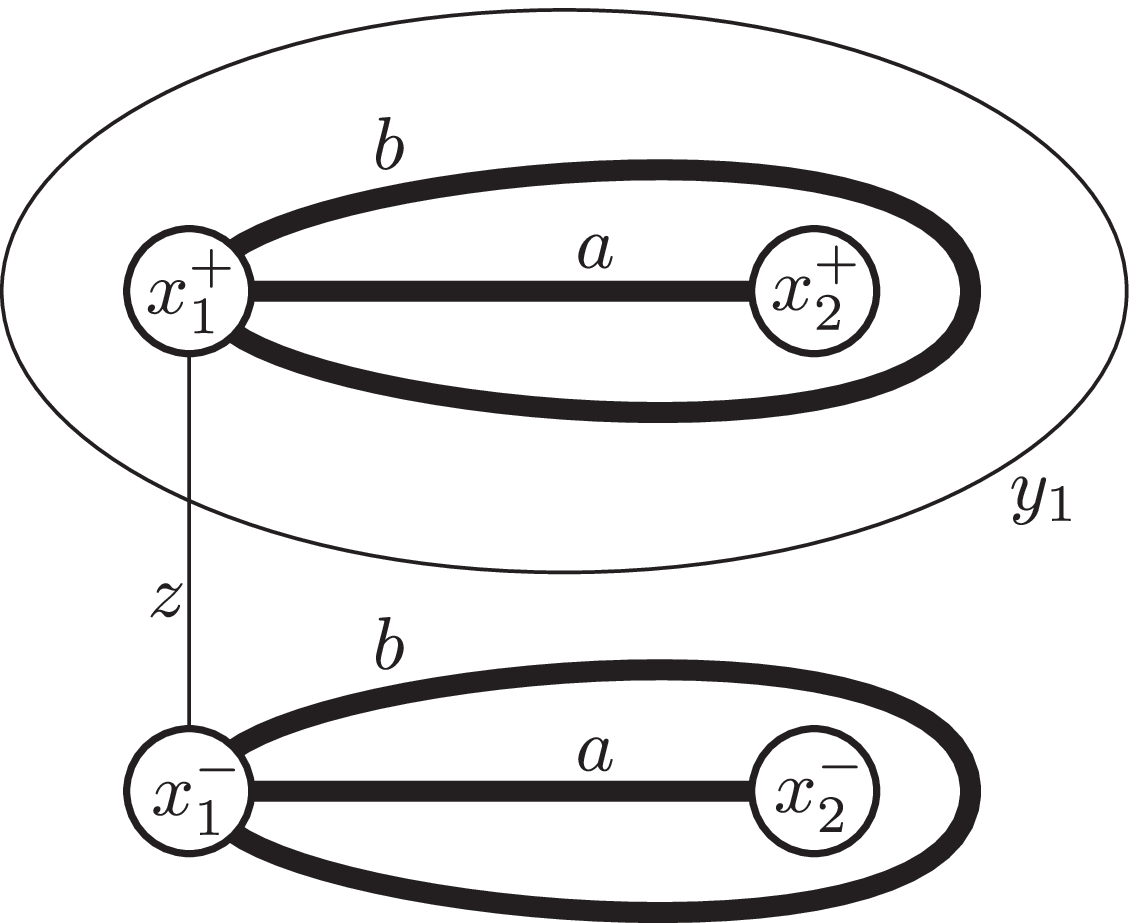}
\caption{}
\label{fig:noNEIdiagram}
\end{figure}

\begin{question}
Which $3$--manifolds are wave-coherent? 
Homma-Ochiai-Takahashi show $S^3$ is wave-coherent \cite{hot}, and Negami-Ochiai show the manifolds $S^1 \times S^2 \# L(p,q)$ are wave-coherent \cite{negamiokita}. In each of these cases, wave moves reduce genus $2$ Heegaard diagrams into a standard one.   On the other hand, note that Osborne shows the lens spaces $L(173,78)$ and $L(85,32)$ admit genus $2$ diagrams with fewer crossings than the standard stabilization of a genus $1$ diagram \cite{osborne}, and hence wave moves alone will not necessarily transform any genus $2$ diagram of these lens spaces into the standard stabilized diagram.  Nevertheless these minimal diagrams of Osborne are coherent.   Are these lens spaces wave-coherent?
\end{question}

\subsection{On the classification of doubly primitive knots in $S^1 \times S^2$}

By Theorem~\ref{thm:simplewave}, the surgery dual to a doubly primitive knot in $S^1 \times S^2$ is a simple knot.  To prove our families {\sc bg}, {\sc gofk}, and {\sc spor} constitute all doubly primitive knots in $S^1 \times S^2$, it remains to show that no simple knot in Lisca's lens spaces other than those in the homology classes of Theorem~\ref{thm:mainhomology} admit a surgery to $S^1 \times S^2$.   

One approach is to (a) show that the simple knots of the correct homological order in Lisca's lens spaces have fibered exterior and then (b) determine which of these have planar fibers.  Cebanu has confirmed the part (a) employing theorems of Brown \cite{brown} and Stallings \cite{stallings} in the vein of Ozsvath-Szabo's proof that Berge's doubly primitive knots are fibered \cite{OSLspace}.  As of this writing, Cebanu has completed part (b) for the first two types of Lisca's lens spaces $L(m^2, md\pm1)$ where $\gcd(m,d)=1$ or $2$ \cite{cebanu}.

One may care to consider alternative approaches of considering either  the fundamental group of the result of the homologically correct surgery on the simple knots (see e.g.~\cite{tange}) or the bandings of the associated tangles.

\section{Knots in lens spaces with $S^1 \times S^2$ surgeries from lattice embeddings}
\label{sec:embeddings}

Recall from section~\ref{sec:notation} that the expression $\frac{p}{q}=[a_1,...,a_n]^-$ with $a_i>2$ induces the Kirby diagram of Figure~\ref{fig:general} for a negative definite plumbing manifold $P(p,q)$ whose boundary is the lens space $L(p,q)$, and that the $\pi$--rotation $u$ in the diagram describes an involution  that expresses $L(p,q)$ as the double cover of $S^3$ branched over the two-bridge link $K(p,q)$.  See also Figures~\ref{fig:chainto4plat} and \ref{fig:chainto4platv2}.

\begin{figure}
\centering
\executeiffilenewer{general.svg}{general.eps}%
{inkscape -z -D --file=general.svg %
--export-eps=general.eps --export-latex}%
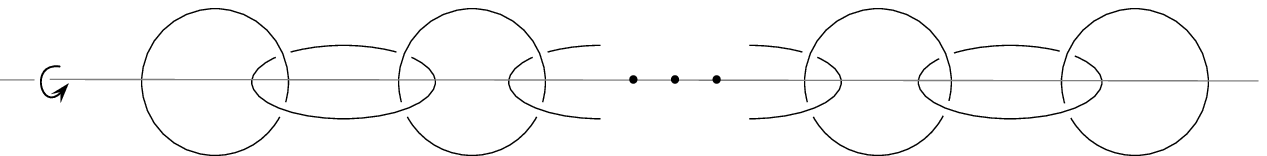%

\caption{A Kirby diagram of the $4$--manifold $P(p,q)$ with lens space boundary $L(p,q)$.}
\label{fig:general}
\end{figure}
 
In his work \cite{lisca} confirming the slice-ribbon conjecture for two-bridge knots, Lisca observes the following:  
Assuming $L(p,q)$ smoothly bounds a rational homology ball $W$, then $Z = P(p,q) \cup -W$ is a smooth, closed, negative definite $4$--manifold with $b_2(Z) = b_2(P(p,q))=n$.  Then by Donaldson's celebrated theorem, the intersection pairing on $H_2(Z;\Z)$ is isomorphic to $-\Id$. Calling $Q_{p,q}$ the intersection pairing  of $P(p,q)$, it follows that the lattice $(\Z^n,Q_{p,q})$ must embed in the standard negative definite intersection lattice of equal rank; that is, there must exist a monomorphism $\phi \colon \Z^n\rightarrow\Z^n$ such that $Q_{p,q}(\alpha,\beta)=-\Id(\phi (\alpha),\phi (\beta))$ for all $\alpha,\beta\in\Z^n\simeq H_2(P(p,q);\Z)/\rm{Tors}$.

In determining which lens spaces $L(p,q)$ bound rational homology balls, Lisca determines which of these lattices $(\Z^n,Q_{p,q})$ admit such an embedding in terms of the coefficients in a continued fraction expansion of $\frac{p}{q}$ by explicitly describing an embedding.   Moreover, these embeddings are essentially unique as discussed in Lemma~\ref{lem:uniqueembedding}. 

In light of how Greene's work \cite{greene} on embeddings  of co-rank 1 lattices with an orthogonal ``changemaker'' vector yields a classification of the pairs $(L,\kappa)$ of a lens space $L$ and homology class $\kappa \in H_1(L)$ of the surgery duals to knots in $S^3$, in this section we examine Lisca's lattice embeddings and determine how they may yield information about the knots dual to these surgeries. 
(Cf.\ Remark~\ref{rem:embeddings}.) 
We observe that these embeddings suggest a method for finding knots in the corresponding lens spaces that admit longitudinal Dehn surgeries to $S^1 \times S^2$.  
While we do not yet have a formal framework for this construction, the knots we obtain through this method are precisely (the duals to) the Berge-Gabai knots and the {\sc gofk} knots.  Curiously, the duals to the {\sc spor} knots do not fall out so directly, though knowing they exist we may locate them.

In section~\ref{sec:liscasembeddings} below we first review Lisca's classification of which lens spaces bound rational homology balls and make explicit the corresponding lattice embeddings.  Then in section~\ref{sec:latticeknots} we describe our procedure for obtaining knots in these lens spaces with longitudinal $S^1 \times S^2$ surgeries and state the results of its application to all of Lisca's embeddings.  We then demonstrate this procedure in section~\ref{sec:explicitexample} with a key example that allows us to compare how the procedure yields {\sc bgiii} and {\sc bgv} knots while stops short of yielding the {\sc spor} knots.

\subsection{Lisca's lens spaces and embeddings of lattices}
\label{sec:liscasembeddings}
We shall use the notational shortcut
\[(\dots,2^{[t]},\dots):=(\dots,\overbrace{2,\dots,2}^t,\dots)\]

\begin{lemma}[Lisca, \cite{lisca} Lemmas 7.1, 7.2, 7.3]\label{l:lisca}
If $L(p,q)$ bounds a rational homology ball then $\frac{p}{q}=[a_1,\dots,a_n]^-$ or $\frac{p}{q}=[a_n,\dots,a_1]^-$ where the string of integers $(-a_1,\dots,-a_n)$ is of one of the following types\footnote{In \cite[Lemma~7.2]{lisca} family $(4)$ is missing from the statement. See Remark~\ref{rem:missing}.}:
\[
\begin{array}{rll}
(1) & (-b_k,-b_{k-1},\dots,-b_1,-2,-c_1,\dots,-c_{l-1},-c_\ell), & k,\ell\geq 1, \\
(2)&(-2^{[t]},-3,-2-s,-2-t,-3,-2^{[s]}),& s,t\geq 0,\\
(3)&(-2^{[t]},-3-s,-2,-2-t,-3,-2^{[s]}),& s,t\geq 0,\\
(4)&(-b_k,-b_{k-1},\dots,-b_1-1,-2,-2,-1-c_1,\dots,-c_{l-1},-c_\ell),& k,\ell\geq 1,\\
(5)&(-t-2,-s-2,-3,-2^{[t]},-4,-2^{[s]}),&s,t\geq 0,\\
(6)&(-t-2,-2,-3-s,-2^{[t]},-4,-2^{[s]}),& s,t\geq 0,\\
(7)&(-t-3,-2,-3-s,-3,-2^{[t]},-3,-2^{[s]}),&s,t\geq 0.
\end{array}
\]
where the $k$--tuple of integers $b_1,\dots,b_k\geq 2$ is arbitrary and the numbers $c_1,\dots,c_\ell\geq 2$ are such that if $\frac{p}{q}=[b_1,\dots,b_k]^-$ and $\frac{r}{s}=[c_1,\dots,c_\ell]^-$ then $\frac{q}{p}+\frac{s}{r}=1$.
\end{lemma}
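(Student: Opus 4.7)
My plan is to follow the strategy laid out in the text immediately before the lemma: if $L(p,q)$ bounds a rational homology ball $W$, then $Z = P(p,q) \cup (-W)$ is a smooth closed negative definite $4$-manifold of rank $n = b_2(P(p,q))$, and Donaldson's theorem forces its intersection form to be standard, so the plumbing lattice $(\mathbb{Z}^n, Q_{p,q})$ admits an isometric embedding into $(\mathbb{Z}^n, -\mathrm{Id})$. Writing $v_1, \dots, v_n$ for the images of the plumbing generators and $e_1, \dots, e_n$ for an orthonormal basis of the codomain, the question reduces to a purely combinatorial one: classify strings $(a_1, \dots, a_n)$ with $a_i \geq 2$ for which integers $c_{ij}$ exist with $\sum_j c_{ij}^2 = a_i$, $\sum_j c_{ij} c_{i+1,j} = -1$, and $\sum_j c_{ij} c_{kj} = 0$ for $|i-k| \geq 2$.

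\textbf{Induction via a blow-down move.} I would proceed by induction on $n$. After renormalizing by sign changes and permutations of the $e_j$, one should be able to assume $v_1 = e_1 + \cdots + e_{a_1}$ with all entries $\pm 1$; the constraint $v_1 \cdot v_2 = 1$ then forces the first $a_1$ coordinates of $v_2$ to sum to $-1$, and combined with $\sum c_{2j}^2 = a_2$ this leaves only a short list of shapes for $v_2$. The key structural lemma I would aim to prove is a \emph{blow-down}: whenever the string contains an entry $-a_i = -2$ in a suitable position, the neighbors $v_{i-1}$ and $v_{i+1}$ must share a common basis direction $e_j$ through $v_i$, so one can delete $v_i$ and $e_j$ and obtain an isometric embedding for a shorter string whose endpoint coefficients $a_{i-1}, a_{i+1}$ differ from the original in a controlled way. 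Iterating such blow-downs, an embeddable string should simplify to one of a finite list of ``minimal'' strings.

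\textbf{Assembling the seven families.} The parametric families (1) and (4) should emerge as the two possible grow-out patterns: family (1) grows the two tails $(b_1,\dots,b_k)$ and $(c_1,\dots,c_\ell)$ from a single central $-2$, while family (4) grows them from a $(-2,-2)$ core; the balancing condition $\tfrac{q}{p} + \tfrac{s}{r} = 1$ should appear as the arithmetic compatibility needed for the reverse blow-up procedure to close up consistently. The remaining families (2), (3), (5), (6), (7) should fall out as the rigid configurations where the reduction process terminates on specific short cores depending on auxiliary parameters $s$ and $t$, each to be verified by writing down an explicit embedding. The \textbf{main obstacle} I anticipate is organizing the case analysis that feeds the induction: the vector $v_i$ can overlap $v_{i\pm 1}$ in several sign patterns and multiplicities, and verifying that the dichotomy ``reducible via blow-down'' versus ``minimal'' is exhaustive is delicate. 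A companion technical hurdle will be bounding the absolute values of the coordinates $c_{ij}$, since entries outside $\{-1,0,1\}$ can in principle occur and require a separate argument ruling them out or classifying them before the blow-down machinery applies uniformly.
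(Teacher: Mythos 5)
This lemma is not proved in the paper at all: it is quoted verbatim (with the corrective footnote about the omitted family $(4)$) from Lisca's Lemmas 7.1--7.3, so the only "proof" the paper offers is the citation, together with the surrounding discussion in Section~\ref{sec:embeddings} of the Donaldson-theorem reduction and, later, the remark that types $(1)$ and $(4)$ are generated from the seeds $(-2,-2,-2)$ and $(-3,-2,-2,-3)$ by the expansion operations (a) and (b). Your plan reconstructs exactly the skeleton of Lisca's actual argument: Donaldson forces an isometric embedding of $(\Z^n,Q_{p,q})$ into $(\Z^n,-\Id)$, and the classification becomes a combinatorial problem about linear subsets of the standard negative lattice, attacked by a contraction/expansion induction whose terminal objects are a finite list of minimal configurations. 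So the approach is the right one and agrees with the source.

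The genuine gap is that everything you flag as an "obstacle" is in fact the entire mathematical content of the result, and none of it is carried out. Concretely: (i) the normalization $v_1=e_1+\dots+e_{a_1}$ presumes all coordinates lie in $\{-1,0,1\}$, which is a theorem of Lisca's (the paper invokes it when tabulating the embeddings as sign patterns), not something you may assume at the outset; (ii) the dichotomy "every embeddable linear string either admits a blow-down or is one of finitely many minimal strings" is precisely the exhaustive case analysis occupying Lisca's Sections 4--7 (the theory of good and standard subsets), and without it you cannot conclude that the seven listed families are all that occur --- indeed the delicacy of this analysis is exactly what allowed family $(4)$ to be dropped from Lisca's own statement of Lemma~7.2 even though his proof produces it; (iii) the condition $\frac{q}{p}+\frac{s}{r}=1$ linking the tails $(b_1,\dots,b_k)$ and $(c_1,\dots,c_\ell)$ in families $(1)$ and $(4)$ is asserted to "appear as an arithmetic compatibility" but is never derived --- it is the Riemenschneider point-rule duality between the negative continued fractions of $p/q$ and $p/(p-q)$, and tracking it through the expansion moves requires an argument. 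As written, the proposal is a faithful outline of the known proof rather than a proof, and each of the three items above would need to be supplied before it establishes the lemma.
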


In order to make explicit the embeddings $\phi$ of the lattices defined by the intersection pairings associated to the lens spaces in Lemma~\ref{l:lisca} we need to introduce some notation. Let $E=\{ e_1, \dots, e_{n}\}$ be a basis of the negative diagonal lattice (so that $-\mbox{Id}(e_i,e_j)=e_i \cdot e_j = -\delta_{ij}$) and let $V=\{v_1, \dots, v_n\}$ denote the standard basis of $H_2(P(p,q))$ given by the Kirby diagram in Figure~\ref{fig:general}. We know, from Lisca's work, that if $\phi(v_i)=\sum_{j=1}^n\lambda_je_j$ then $|\lambda_j|\leq1$, and therefore we can summarize $\phi$ as in the following tables, where the signs $+$ and $-$ stand for $\lambda_j=+ 1$ and $\lambda_j= -1$ respectively, a blank stands for $\lambda_j=0$, and the number in the top left corner refers to the numbering of types in Lemma~\ref{l:lisca}. Note that the following holds:
\begin{enumerate}
\item $Q_{p,q}(v_i,v_i)=\phi(v_i)\cdot\phi(v_i)=-a_i$, 
\item $Q_{p,q}(v_i,v_j)=\phi(v_i)\cdot\phi(v_j)=1$ if $|i-j|=1$, and
\item $Q_{p,q}(v_i,v_j)=\phi(v_i)\cdot\phi(v_j)=0$ if $|i-j|>1$.
\end{enumerate}	
Whenever the meaning is clear we will drop the $\phi$ from the notation and write $v_i=\sum_{j=1}^n\lambda_je_j$ for $\phi(v_i)=\sum_{j=1}^n\lambda_je_j$.  First we give embeddings for types  (2), (3), (5), (6), and (7) and then we give embeddings for types (1) and (4) as these latter two take on a different character.  These are all implicit in \cite[Section 7]{lisca}.

{\small
\[
\begin{array}{r|c@{\hspace{0.5mm}}c@{\hspace{0.5mm}}c@{\hspace{0.5mm}}c@{\hspace{0.5mm}}c@{\hspace{0.5mm}}c@{\hspace{0.5mm}}c@{\hspace{0.5mm}}c@{\hspace{0.5mm} }c@{\hspace{0.5mm}}c@{\hspace{0.5mm}}c@{\hspace{0.5mm}}c@{\hspace{0.5mm}}c@{\hspace{0.5mm}}c}
(2)     &e_1&e_2&e_3&e_4&e_5&e_6   &\cdots&e_{t+3}&e_{t+4}&e_{t+5}&e_{t+6}&\cdots&e_{t+s+3}&e_{t+s+4}\\
\hline
v_1     &   &   &   &   &   &      &       & -     &  +    &       &       &      &         &         \\
\vdots  &   &   &   &   &   &      &\udots &       &       &       &       &      &         &         \\
v_{t-1} &   &   &   &   & - & +    &       &       &       &       &       &      &         &         \\
v_t     &   &   & - &   & + &      &       &       &       &       &       &      &         &         \\
v_{t+1} &   & + & + & + &   &      &       &       &       &       &       &      &         &         \\
v_{t+2} & + & - &   &   &   &      &       &       &       & -     &   -   &\cdots& -       &  -      \\
v_{t+3} &   & + & - &   & - &-     &\cdots & -     &     - &       &       &      &         &         \\
v_{t+4} & - & - &   & + &   &      &       &       &       &       &       &      &         &         \\
v_{t+5} & + &   &   &   &   &      &       &       &       & +     &       &      &         &         \\
v_{t+6} &   &   &   &   &   &      &       &       &       &  -    & +     &      &         &         \\
\vdots  &   &   &   &   &   &      &       &       &       &       &       &\ddots&         &         \\
v_{t+s+4}&  &   &   &   &   &      &       &       &       &       &       &      &     -   &  +      \\
\end{array}
\]
}
{\small
\[
\begin{array}{r|c@{\hspace{0.5mm}}c@{\hspace{0.5mm}}c@{\hspace{0.5mm}}c@{\hspace{0.5mm}}c@{\hspace{0.5mm}}c@{\hspace{0.5mm}}c@{\hspace{0.5mm}}c@{\hspace{0.5mm} }c@{\hspace{0.5mm}}c@{\hspace{0.5mm}}c@{\hspace{0.5mm}}c@{\hspace{0.5mm}}c@{\hspace{0.5mm}}c}
(3)     &e_1&e_2&e_3&e_4&e_5&e_{6} &\cdots&e_{t+3}&e_{t+4}&e_{t+5}&e_{t+6}&\cdots&e_{t+s+3}&e_{t+s+4}\\
\hline
v_1     &   &   &   &   &   &      &       & -     &  +    &       &       &      &         &         \\
\vdots  &   &   &   &   &   &      &\udots &       &       &       &       &      &         &         \\
v_{t-1}   &   &   &   &   & - & +    &       &      &       &       &       &      &         &         \\
v_t     &   &   & - &   & + &      &       &       &       &       &       &      &         &         \\
v_{t+1} &   & + & + & + &   &      &       &       &       & +     & +     &\cdots& +       &  +      \\
v_{t+2} & + & - &   &   &   &      &       &       &       &       &       &      &         &         \\
v_{t+3} &   & + & - &   & - &-     &\cdots & -     &     - &       &       &      &         &         \\
v_{t+4} & - & - &   & + &   &      &       &       &       &       &       &      &         &         \\
v_{t+5} &   &   &   & - &   &      &       &       &       & +     &       &      &         &         \\
v_{t+6} &   &   &   &   &   &      &       &       &       &  -    & +     &      &         &         \\
\vdots  &   &   &   &   &   &      &       &       &       &       &       &\ddots&         &         \\
v_{t+s+4}&  &   &   &   &   &      &       &       &       &       &       &      &     -   &  +      \\
\end{array}
\]
}

{\small
\[
\begin{array}{r|c@{\hspace{0.5mm}}c@{\hspace{0.5mm}}c@{\hspace{0.5mm}}c@{\hspace{0.5mm}}c@{\hspace{0.5mm}}c@{\hspace{0.5mm}}c@{\hspace{0.5mm}}c@{\hspace{0.5mm} }c@{\hspace{0.5mm}}c@{\hspace{0.5mm}}c@{\hspace{0.5mm}}c@{\hspace{0.5mm}}c@{\hspace{0.5mm}}c}
(5)     &e_1&e_2&e_3&e_4&e_5&e_6&\cdots&e_{t+3}&e_{t+4}&e_{t+5}&e_{t+6}&\cdots&e_{t+s+3}&e_{t+s+4}\\
\hline
v_1     & + & - &   &   & + &   &\cdots& +     &  +    &       &       &      &         &         \\
v_2     &   & + & - &   &   &   &      &       &       &   -   &  -    &\cdots&  -      &  -      \\
v_3     & - & - &   & + &   &   &      &       &       &       &       &      &         &         \\
v_4     & + &   &   &   & - &   &      &       &       &       &       &      &         &         \\
v_5     &   &   &   &   & + & - &      &       &       &       &       &      &         &         \\
\vdots  &   &   &   &   &   &   &\ddots&       &       &       &       &      &         &         \\
v_{t+3} &   &   &   &   &   &   &      & +     & -     &       &       &      &         &         \\
v_{t+4} &   & + & + & + &   &   &      &       &  +    &       &       &      &         &         \\
v_{t+5} &   &   & - &   &   &   &      &       &       & +     &       &      &         &         \\
v_{t+6} &   &   &   &   &   &   &      &       &       & -     &+      &      &         &         \\
\vdots  &   &   &   &   &   &   &      &       &       &       &       &\ddots&         &         \\
v_{t+s+4}&  &   &   &   &   &   &      &       &       &       &       &      &     -   &  +      \\
\end{array}
\]
}

{\small
\[
\begin{array}{r|c@{\hspace{0.5mm}}c@{\hspace{0.5mm}}c@{\hspace{0.5mm}}c@{\hspace{0.5mm}}c@{\hspace{0.5mm}}c@{\hspace{0.5mm}}c@{\hspace{0.5mm}}c@{\hspace{0.5mm} }c@{\hspace{0.5mm}}c@{\hspace{0.5mm}}c@{\hspace{0.5mm}}c@{\hspace{0.5mm}}c@{\hspace{0.5mm}}c}
(6)     &e_1&e_2&e_3&e_4&e_5&e_6&\cdots&e_{t+3}&e_{t+4}&e_{t+5}&e_{t+6}&\cdots&e_{t+s+3}&e_{t+s+4}\\
\hline
v_1     & + & - &   &   & + &   &\cdots& +     &  +    &       &       &      &         &         \\
v_2     &   & + & - &   &   &   &      &       &       &       &       &      &         &         \\
v_3     & - & - &   & + &   &   &      &       &       &   +   &  +    &\cdots&  +      &  +      \\
v_4     & + &   &   &   & - &   &      &       &       &       &       &      &         &         \\
v_5     &   &   &   &   & + & - &      &       &       &       &       &      &         &         \\
\vdots  &   &   &   &   &   &   &\ddots&       &       &       &       &      &         &         \\
v_{t+3} &   &   &   &   &   &   &      & +     & -     &       &       &      &         &         \\
v_{t+4} &   & + & + & + &   &   &      &       &  +    &       &       &      &         &         \\
v_{t+5} &   &   &   & - &   &   &      &       &       & +     &       &      &         &         \\
v_{t+6} &   &   &   &   &   &   &      &       &       & -     &+      &      &         &         \\
\vdots  &   &   &   &   &   &   &      &       &       &       &       &\ddots&         &         \\
v_{t+s+4}&  &   &   &   &   &   &      &       &       &       &       &      &     -   &  +      \\
\end{array}
\]
}

{\small
\[
\begin{array}{r|c@{\hspace{0.5mm}}c@{\hspace{0.5mm}}c@{\hspace{0.5mm}}c@{\hspace{0.5mm}}c@{\hspace{0.5mm}}c@{\hspace{0.5mm}}c@{\hspace{0.5mm}}c@{\hspace{0.5mm} }c@{\hspace{0.5mm}}c@{\hspace{0.5mm}}c@{\hspace{0.5mm}}c@{\hspace{0.5mm}}c@{\hspace{0.5mm}}c@{\hspace{0.5mm}}c}
(7)     &e_1&e_2&e_3&e_4&e_5&e_6&e_7   &\cdots&e_{t+4}&e_{t+5}&e_{t+6}&e_{t+7}&\cdots&e_{t+s+4}&e_{t+s+5}\\
\hline
v_1     & + & + & + &   &   & + & +    & \cdots&   +   & +     &       &      &      &         &         \\
v_2     &   &   & - & + &   &   &      &       &       &       &       &      &      &         &         \\
v_3     &   & - & + &   &+  &   &      &       &       &       &  +    &+     &\cdots&  +      &   +     \\
v_4     & + &   & - & - &   &   &      &       &       &       &       &      &      &         &         \\
v_5     & - &   &   &   &   & + &      &       &       &       &       &      &      &         &         \\
v_6     &   &   &   &   &   & - & +    &       &       &       &       &      &      &         &         \\
\vdots  &   &   &   &   &   &   &      &\ddots &       &       &       &      &      &         &         \\
v_{t+4} &   &   &   &   &   &   &      &       &  -    & +     &       &      &      &         &         \\
v_{t+5} &   &+  &   &   &+  &   &      &       &       & -     &       &      &      &         &         \\
v_{t+6} &   &   &   &   & - &   &      &       &       &       &+      &      &      &         &         \\
v_{t+7} &   &   &   &   &   &   &      &       &       &       &-      & +    &      &         &         \\
\vdots  &   &   &   &   &   &   &      &       &       &       &       &      &\ddots&         &         \\
v_{t+s+5}&  &   &   &   &   &   &      &       &       &       &       &      &      &  -      &   +     \\
\end{array}
\]
}

For the embeddings of types (1) and (4) in Lemma~\ref{l:lisca}, consider the following two operations preformed on a string of integers

\begin{itemize}
\item[(a)] $(-a_1,\dots,-a_n)\longrightarrow (-2,-a_1,\dots,-a_n-1)$
\item[(b)] $(-a_1,\dots,-a_n)\longrightarrow (-a_1-1,\dots,-a_n,-2).$
\end{itemize}

Type (1) strings are obtained from $(-2,-2,-2)$ by performing a sequence of operations (a) and (b).   Type (4) strings are obtained analogously from the string $(-3,-2,-2,-3)$. 
Embeddings of these two strings are give below.
{\small
\[
\begin{array}{r|c@{\hspace{0.5mm}}c@{\hspace{0.5mm}}c}
    &e_1&e_2&e_3\\
\hline
v_1     & + & - &         \\
v_2     &   & + & -    \\
v_3     & - & - &     \\
\end{array}
\quad \quad \quad \quad \quad \quad
%
\begin{array}{r|c@{\hspace{0.5mm}}c@{\hspace{0.5mm}}c@{\hspace{0.5mm}}c}
    &e_1&e_2&e_3&e_4\\
\hline
v_1     &  & + & + &+         \\
v_2     & + & - &  &    \\
v_3     &  & + & - &     \\
v_4     & - & - &  & +    \\
\end{array}
\]
}

Notice that in both cases there is an $e_i$ that appears only in the extremal vectors: $e_1$ in type (1) and $e_4$ in type (4).   This permits operations (a) and (b) to extend to the associated embeddings of the strings. For example, applying operation (a) to $(-2,-2,-2)$  gives $(-2,-2,-2,-3)$ and the embedding above becomes $v'_1=-e_1+e_4$, $v_2'=v_1$, $v_3'=v_2$, and $v'_4=v_3-e_4$.  In a similar fashion one may explicitly obtain the embedding of any string of type (1) or (4).

\begin{lemma}\label{lem:uniqueembedding}
The above embeddings are unique up to reindexing the basis vectors and scaling by a factor of $-1$.
\end{lemma}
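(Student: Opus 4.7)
The plan is to prove uniqueness by induction on the length $n$ of the continued fraction string, treating each of the seven types of Lemma~\ref{l:lisca} in turn. The principal tool is the rigidity already established by Lisca: the coordinates $\lambda_j$ of each $\phi(v_i) = \sum_j \lambda_j e_j$ lie in $\{-1, 0, +1\}$. Consequently, $\phi(v_i) \cdot \phi(v_i) = -a_i$ forces exactly $a_i$ of the $\lambda_j$ to be nonzero; $\phi(v_i) \cdot \phi(v_{i+1}) = 1$ forces the supports of consecutive vectors to overlap in exactly the pattern needed to yield $\sum_j \lambda_j \lambda_j' = -1$; and $\phi(v_i) \cdot \phi(v_j) = 0$ for $|i-j|>1$ sharply restricts which basis vectors can be shared between non-consecutive $v_i,v_j$.

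For the base cases I would handle the shortest string representative of each type: $(-2,-2,-2)$ for type~(1), $(-3,-2,-2,-3)$ for type~(4), and $(s,t)=(0,0)$ for each of types~(2), (3), (5), (6), (7). In each, after applying a signed permutation of $\{e_1,\ldots,e_n\}$ to place $\phi(v_1)$ into the prescribed normal form, the constraints on $\phi(v_2)$ pin down its support up to the residual signed-permutation freedom on basis vectors not yet appearing. Continuing in this manner, each $\phi(v_{i+1})$ is determined by its orthogonality to $\phi(v_1),\ldots,\phi(v_{i-1})$, its inner product $+1$ with $\phi(v_i)$, and its self-intersection $-a_{i+1}$. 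For the inductive step on types~(1) and~(4), we use operations~(a) and~(b). Applying~(a) to $(-a_1,\ldots,-a_n)$ produces a string with a new leading vector $v_0'$ and a modified final vector $v_n'$; rigidity forces a fresh basis vector $e_{n+1}$ to appear in exactly $v_0'$ and $v_n'$, with signs determined by the requirement $v_0'\cdot v_1=1$ and the increment to $v_n$'s self-intersection. Operation~(b) is handled symmetrically. Since Lisca's Lemma~7.1 produces every type~(1) string from $(-2,-2,-2)$ and every type~(4) string from $(-3,-2,-2,-3)$ by repeated~(a) and~(b), the induction closes. For types~(2), (3), (5), (6), (7), analogous extension operations incrementing $s$ or $t$ by one serve as the inductive step: each extension introduces a new basis vector at one end of the chain, whose orthogonality relations force it to appear in exactly the two newly-introduced vectors.

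The main obstacle I anticipate is the volume of case analysis: seven types each require their own combinatorial verification, and within each type one must exclude ``twisted'' embeddings in which the basis vectors appearing in the central portion (especially the columns of $e_{t+3}, e_{t+4}, e_{t+5}$ in types (2) and (3), or the analogous central columns in (5)--(7)) are distributed among the $v_i$ in a pattern not related to the displayed one by a signed permutation. The critical restriction used throughout is that $\phi(v_i) \cdot \phi(v_j)=0$ for $|i-j|>1$, which, combined with the $\{-1,0,+1\}$ coefficient bound, dramatically limits how any fixed basis vector $e_k$ can be shared among the images $\phi(v_i)$. Careful bookkeeping of the sets $\{j : \lambda_j \neq 0 \text{ in } \phi(v_i)\}$ and their pairwise intersections should suffice to rule out alternative embeddings in each case.
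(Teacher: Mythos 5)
Your overall strategy---start from the shortest string of each type and propagate uniqueness through the extension operations, using the $|\lambda_j|\le 1$ bound together with the intersection constraints---is close in spirit to what the paper does, and for types (2), (3), (5), (6), (7) it would go through: there the paper argues directly that a $-2$--chain embeds essentially uniquely (a square $-2$ vector must be $\pm e_i\pm e_j$, and adjacency forces consecutive ones to overlap in exactly one basis vector), after which the at most four remaining vectors of framing $\neq -2$ are forced. Your induction on $s$ and $t$ is a reorganization of the same bounded case analysis.

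The genuine gap is in your inductive step for types (1) and (4). You assert that, for an arbitrary embedding of the lattice of the expanded string $(-2,-a_1,\dots,-a_n-1)$, ``rigidity forces a fresh basis vector $e_{n+1}$ to appear in exactly $v_0'$ and $v_n'$.'' The coefficient bound $|\lambda_j|\le 1$ does not force this by itself: a priori the extra rank could be distributed among the $\phi(v_i')$ in other patterns consistent with all the pairwise products, and ruling those out when the entries $b_1,\dots,b_k$ are arbitrary is exactly the hard combinatorial content of Lisca's contraction/expansion analysis (his Theorem~6.4, which shows that \emph{every} embedding of a type (1) or (4) lattice arises from the embedding of the seed string by a sequence of expansions). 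The paper's proof does not reprove this; it cites it, precisely because, as it notes, the direct argument is unwieldy once the number of entries different from $-2$ is unbounded. As written, your proposal quietly assumes the conclusion of that theorem at the one place where the work actually lies; to close the gap you would either need to carry out Lisca's bad-component/contraction analysis for these families or, as the paper does, invoke his Theorem~6.4 for types (1) and (4).
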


\begin{proof}
This follows from the proof of \cite[Theorem~6.4]{lisca} that states that if a negative plumbing associated to a lens space admits an embedding, then this embedding can be obtained from the embedding of the lattice associated to the string $(-2,-2,-2)$ by a sequence of operations called ``expansions''. The embedding of $(-2,-2,-2)$ is unique up to reindexing and scaling, and the expansions share this property. On the other hand, the reader can easily check that whenever there is a $-2$--chain in the plumbing, that is $n\geq 1$ consecutive unknots with framing $-2$ in the diagram in Figure~\ref{fig:general}, the embedding of this chain is unique up to reindexing and scaling. Indeed, the only combination of basis vectors with square $-2$ are given by $\pm e_i\pm e_j$; and if another $-2$--framed unknot is linked to this one, then its embedding must be of the form $\mp e_i \pm e_k$ or $\pm e_k \mp e_j$. Families $(2),(3),(5),(6)$ and $(7)$ in Lemma~\ref{l:lisca} consist of $-2$--chains and at most $4$ unknots with framings different from $-2$. One can check that once the embedding of the $-2$--chains is fixed, the embedding of the rest of the unknots in the diagram is forced, and therefore the embedding is unique up to reindexing and scaling. It remains true for families $(1)$ and $(4)$ that once the embedding of the $-2$--chains is fixed the rest of the embedding is forced. However, since in these two families the number of unknots with framing different from $-2$ is arbitrary, it is more cumbersome to show it directly than to deduce it  from Lisca's general analysis on the embedding of lattices associated to lens spaces.
\end{proof}

\subsection{Surgeries from lattice embeddings}\label{sec:latticeknots}
Here we give a heuristic for finding knots in Lisca's lens spaces with $S^1 \times S^2$ surgeries from the lattice embeddings.
With the assumption that such a knot should be suitably ``simple'' in some sense, we restrict attention to $-1$ surgery on unknots in the Kirby diagram (and hence blowdowns) that are equivariant with respect to the involution and have an ``uncomplicated'' presentation in hopes that blowing down such an unknot will lead to further reductions of the Kirby diagram.

To begin such a sequence of reductions we look for a coefficient $a_j$ that equals $2$.  The corresponding component of the chain link has framing $-2$.  A blowdown along an unknot linking this component once will change its framing to $-1$, prompting a subsequent blowdown.  Given the lattice embedding, if $a_j=2$, then $Q_{p,q}(v_j,v_j)=-2$ and $\phi(v_j) = \varepsilon_k e_k + \varepsilon_\ell e_\ell$ for some choices of $k, \ell$ and signs $\varepsilon_k, \varepsilon_\ell=\pm1$.   Select either of these two basis vectors, say $e_k$.  The vectors $v_i$ that have non-trivial $e_k$ component then indicate the components of the chain link that some unknot $K$ should link so that blowing down along $K$ should initiate a chain of blowdowns yielding $S^1 \times S^2$.

Thus for each rank $n$ lattice embedding of Lemma~\ref{l:lisca} we have the following procedure: 
\begin{enumerate}
\item Let $E_2 \subset E$ be the set of basis vectors $e$ such that $e$ is a component vector of some vector $v$ of weight $-2$.
\item Let $E^k_2$ be the subset of $E_2$ consisting of keystone vectors.
A basis vector $e \in E$ is a {\em keystone} for the embedding of $V$ if there is a filtration $E = E^0 \supset E^1 \supset \dots \supset E^n = \emptyset$ such that $E^{j-1} - E^{j} = \{e^j\}$, $e^1 = e$, and for $2\leq j \leq n$ there exists a vector $v^j\in V$ whose embedding projects to $\pm e^j$ in the lattice spanned by $E^{j-1}$.
\item Given $e_i \in E^k_2$ let $(v_j,\epsilon_j)$ be the set of vectors $v_j$ with linking number $\epsilon_j = e_i \cdot v_j\neq 0$.
\item Find an oriented unknot $K$ that links component $j$ of the chain link with linking number $\epsilon_j$ and is invariant with respect to the involution $u$.
\item Check that $-1$ surgery on $K$ produces $S^1 \times S^2$.
\end{enumerate}

We say such a $K$ obtained by the above manner is ``suggested by the lattice embedding''.   Note that if $e \in E_2$ is a keystone for the embedding of $V$, then we may view $K$ as giving an embedded vector $\phi(v_K)=\pm e$.

\begin{proof}[Lattice Embedding Proof of Theorem~\ref{thm:main}]
We need to show that for each lens space in the theorem there is at least one knot with a longitudinal surgery yielding $S^{1}\times S^{2}$. From Lemma~\ref{l:lisca} we know that the lens spaces we need to consider coincide with the lens spaces obtained by surgery on the black diagrams in Figure~\ref{fig:eqsurgeries}. 

Applying the above described heuristic to the types (1)--(7) we find the red and green knots in Figure~\ref{fig:eqsurgeries} finishing the proof. We sketch how the procedure runs for the various types.

For each type (2), (3), (5), (6), (7) the set $E_2$ is fixed and $E_2^k$ is easily determined. Moreover, notice that if a keystone vector appears in a $-2$-chain then all basis vectors appearing in the $-2$-chain are keystone vectors. Each of these keystones yields a different unknot in the fourth step of the above procedure. However, all the unknots thus obtained are related to one another by the handle slides described in Figure~\ref{fig:handleslide} and therefore yield the same linking types. The embedding suggests two different linking types yielding $S^{1}\times S^{2}$, in red and green in Figure~\ref{fig:eqsurgeries}, for types (2) and (3) and only one, in red, for types (5), (6), (7).

For types (1) and (4), the set $E_2$ depends on sequences $b_1, \dots, b_k$ and $c_1, \dots, c_\ell$.  Recall from the end of section~\ref{sec:liscasembeddings} that these two types, and hence these two sequences, are generated from the ``seed'' strings $(-2,-2,-2)$ and $(-3,-2,-2,-3)$ respectively by applications of the two operations (a) and (b).  The corresponding lattice embeddings of these seeds and their expansions by the operations (a) and (b) are also indicated.   
For these embeddings of the seeds, the set $E_2^k$ is easily determined.   Upon expansions by the operations (a) and (b), the set $E_2^k$ for type (1) is seen to partition according to a central $-2$--chain and a $-2$--chain at either end, while for type (4) $E_2^k$ remains associated to the central $-2$--chain.   For each of these partitions we find a knot suggested by the embedding shown in red or green in Figure~\ref{fig:eqsurgeries} types (1) and (4).
\end{proof}

\begin{lemma}\label{lem:BGembedding}
The lens space duals to the {\sc bg} and {\sc gofk} knots are the knots suggested by the lattice embeddings. 
\end{lemma}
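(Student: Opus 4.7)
The plan is to work case by case through the seven types of lattice embeddings in Lemma~\ref{l:lisca}, and match the red and green unknots of Figure~\ref{fig:eqsurgeries} to the duals of the {\sc bg} and {\sc gofk} knots by comparing their homology classes in the ambient lens space. The key tool is Theorem~\ref{thm:simplewave}, which says that the surgery dual to any doubly primitive knot in $S^1 \times S^2$ is a simple knot in the resulting lens space, together with the fact that a simple knot in $L(p,q)$ is determined up to isotopy by its homology class. Thus, once one knows that the suggested knot $K$ is itself a $(1,1)$--knot with an $S^1 \times S^2$ surgery, it suffices to match the class $[K] \in H_1(L(p,q))$ with a class appearing in Theorem~\ref{thm:mainhomology}.

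First I would verify that $-1$ surgery on each suggested unknot does yield $S^1 \times S^2$. This is precisely step (5) of the procedure in section~\ref{sec:latticeknots}: the initial blow-down converts a $-2$-framed chain component to $-1$, which triggers another blow-down, and so on. For types (2), (3), (5), (6), (7) one performs finitely many explicit Kirby moves on the diagrams of Figure~\ref{fig:eqsurgeries}. For types (1) and (4), which come from the seed strings $(-2,-2,-2)$ and $(-3,-2,-2,-3)$ by the expansion operations (a) and (b) of section~\ref{sec:liscasembeddings}, one proceeds by induction on the number of expansions, checking that each expansion is compatible with the red and green linking pattern and with the cascade of blow-downs that follows. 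Next, using Lemma~\ref{lem:homology}, I would read off the linking number $\epsilon_j = \lk(K, L_j)$ with each chain link component to write $[K] = \sum_j \epsilon_j \mu_j$ and then collapse to a multiple of $\mu_1$ via $\mu_i = P_{i-1}\mu_1$. Finally, comparison with the continued fraction calculations already carried out in the proof of Theorem~\ref{thm:mainhomology} will show that the classes produced are exactly $\pm m\mu$ for the {\sc bgi}, {\sc bgii}, {\sc bgiv}, {\sc bgv} cases and $\pm dm\mu$ for the {\sc bgiii}, the second {\sc bgiv}, and the {\sc gofk} cases.

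The technical step that still needs attention is confirming that each red or green knot is in fact a $(1,1)$--knot in the relevant lens space, so that Theorem~\ref{thm:simplewave} and the uniqueness of simple knots per homology class apply. This can be done either a posteriori (once its homology class matches that of a known {\sc bg} or {\sc gofk} dual, it must be the simple knot in that class) or directly by isotoping the unknot into the Heegaard surface for the lens space presented by the chain link. I expect the main obstacle to be not any single deep point but the bookkeeping: the seven types, together with the red/green dichotomy in types (1), (2), (3) and the symmetric keystone choices at the two ends of types (1) and (4), produce many sub-cases, and in each one the continued fraction calculation must be matched with a specific homology class formula from Theorem~\ref{thm:mainhomology}. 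Keeping signs, indices and the orientation conventions $q\mu = \mu'$ consistent across all cases is where the real labor of the proof will lie.
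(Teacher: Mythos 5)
Your route is genuinely different from the paper's. The paper proves this lemma by brute identification: Lemma~\ref{lem:uniqueembedding} pins down the embeddings (hence the suggested knots are exactly the red and green curves of Figure~\ref{fig:eqsurgeries}), and then section~\ref{s:correspondence} performs explicit Kirby calculus transforming each of those framed curves into one of the chain link surgery descriptions of Figure~\ref{fig:chainsurgery} that \emph{define} the duals of the {\sc bg} and {\sc gofk} knots. You instead propose to identify the knots indirectly, by computing their homology classes via Lemma~\ref{lem:homology}, matching against Theorem~\ref{thm:mainhomology}, and appealing to Theorem~\ref{thm:simplewave} together with the fact that a simple knot in a lens space is determined by its homology class. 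That strategy is attractive because it replaces a long chain of diagram moves by an arithmetic computation, and it is exactly the rigidity the paper exploits elsewhere (e.g.\ in Theorem~\ref{thm:reduction}).

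However, there is a genuine gap at the step you yourself flag, and your proposed ``a posteriori'' fix is circular. Matching $[K]$ with a class from Theorem~\ref{thm:mainhomology} proves nothing by itself: a homology class contains many non-isotopic knots, and it is only \emph{within the class of simple knots} that the homology class determines the isotopy class. To conclude that the suggested knot $K$ is simple via Theorem~\ref{thm:simplewave}(1) you must first establish that $K$ is a $(1,1)$--knot in the lens space (the $S^1\times S^2$ surgery alone does not suffice), and that cannot be deduced from its homology class. So the direct verification --- exhibiting a $1$--bridge presentation of each red and green curve with respect to the genus one splitting coming from the chain link (note: $(1,1)$ means $1$--bridge with respect to the splitting, not lying on the Heegaard torus) --- is unavoidable, is not carried out in your sketch, and is comparable in labor to the Kirby calculus the paper actually does. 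Until that step is supplied, the argument does not close.
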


\begin{proof}
By Lemma~\ref{lem:uniqueembedding} the lattice embeddings for types (1)--(7) above are essentially unique. The different knots suggested by the embedding are then, as explained in the preceding proof, precisely the red and green curves in Figure~\ref{fig:eqsurgeries}. In section~\ref{s:correspondence} we show that these knots do correspond to the duals to the {\sc bg} and {\sc gofk} knots: using Kirby calculus we relate the diagrams in Figure~\ref{fig:eqsurgeries} with the knots in Figure~\ref{fig:chainsurgery}.
\end{proof}

\begin{remark}
The dotted blue unknot in type (3) of Figure~\ref{fig:eqsurgeries} corresponds to the family of {\sc spor} knots when the parameter $t=1$, as shown in section~\ref{s:correspondence}. These are {\em not} suggested by the embedding as we now demonstrate.
\end{remark}

\begin{figure}[h]
\centering
\executeiffilenewer{expanded2.svg}{expanded2.eps}%
{inkscape -z -D --file=expanded2.svg %
--export-eps=expanded2.eps --export-latex}%
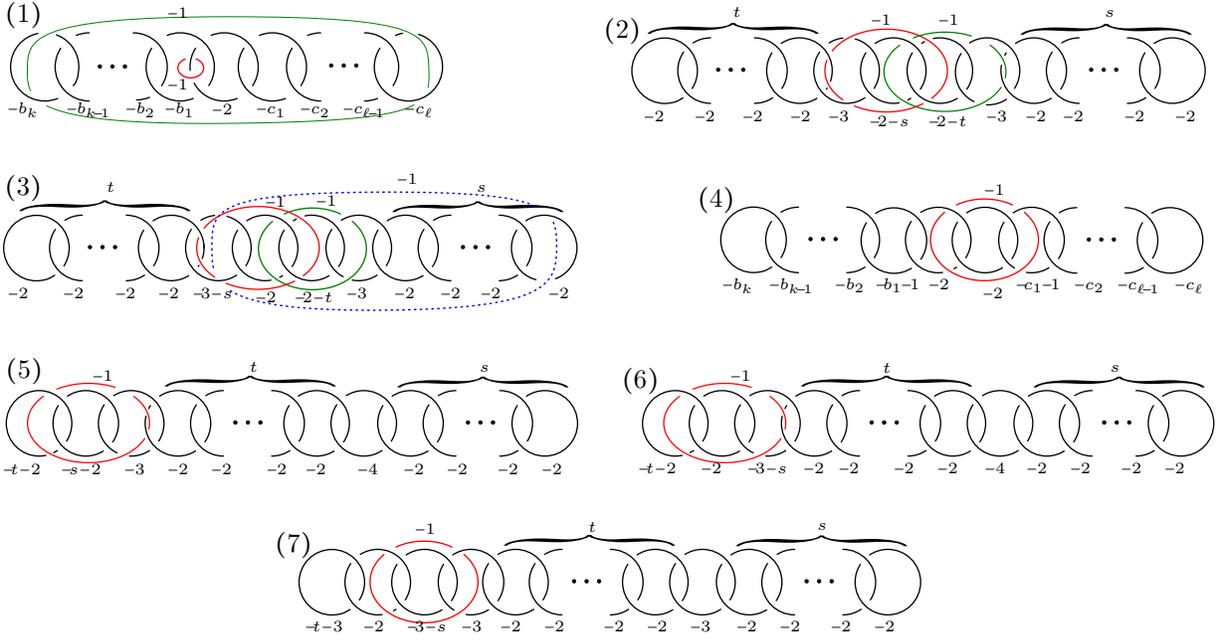%

\caption{Equivariant surgeries yielding $S^1 \times S^2$}
\label{fig:eqsurgeries}
\end{figure}

\begin{figure}[h]
\centering
\includegraphics[height=1in]{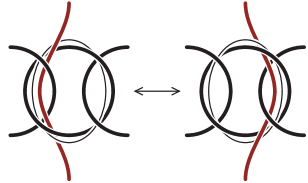}
\caption{A handle slide of a red curve across a $-2$--framed component of a chain link.  The lighter curve indicates the framing.}
\label{fig:handleslide}
\end{figure}

\subsection{Explicit example: type (3), $t=1$, $s=n-2$}\label{sec:explicitexample}
In order to understand the duals of the {\sc spor} knots in the lens spaces we are going to work in full detail the case $L(p_n,q_n)=L(16n^2-8n+1,16n-2)$ where $\frac{p_n}{q_n} = \frac{16n^2-8n+1}{16n-2}=[2, n+1, 2, 3, 3, 2^{[n-2]}]^-$.
The string of surgery coefficients for the chain link, $(-2,-n-1,-2,-3,-3,-2^{[n-2]})$, shows that all the lens spaces in this example are of type (3) of Lemma~\ref{l:lisca} with $t=1$ and $s=n-2$.  From Lisca's embeddings above we thus obtain the following explicit embedding of the intersection lattice $(\Z^{n+3},Q_{p_n,q_n})$ into the standard negative diagonal lattice of rank $n+3$:

\[
\begin{array}{r|rrrrrrrrrrrr}
&e_1& e_2&e_3&e_4&e_5&e_6&e_7&e_8& \cdots & e_{n+2} & e_{n+3}\\
\hline
v_1 &     &   & -&  &+  & &&  &  \\
v_2 &    & + &+ &+  &  & + & + & + & \cdots  & +& + \\
v_3 &   + & - & &  &  &  &  &  &  \\
v_4 &    & + & -&  &-  &  &  &  &  \\
v_5 &   - & - & & + &  &  &  &  &  \\
v_6 &    &  & & - &  & + &  &  &  \\
v_7 &    &  & &  &  & - & + &  &  \\
v_8       &  &  & &  &  &  &-  &+  &  \\
      \vdots&   &  & &  &  &  &  &  & \ddots \\
v_{n+3}  &&  &  & &  &  &  &  &  & - &+ \\
\end{array}
\]

Here we see that $E_2 = E = \{ e_1, \dots, e_{n+3}\}$ and one may easily check that $E_2^k = \{e_1, e_2, e_3, e_5\}$ and therefore the embedding suggests $4$ possible unknots. However, the reader may check that the unknots obtained from $e_{1}$ and $e_{2}$ are related by a handle slide and so are the two unknots defined by $e_{3}$ and $e_{5}$. A Kirby calculus argument shows that $-1$ surgery on either of these two unknots embedded in $L(p_n,q_n)$ yields $S^{1}\times S^{2}$. The unknot related to $e_{1}$ is shown embedded in $L(p_n,q_n)$ as the red $1$-framed curve on the top right diagram of Figure\ref{fig:quotients} and the one defined by $e_{3}$ corresponds to the red $-1$-framed curve on the top left diagram of Figure\ref{fig:quotients}. Comparing the quotients with Figure~\ref{fig:BGIIIandV} we obtain that the unknots suggested by the embedding correspond to the duals of {\sc bgiii} and {\sc bgv}.

In order to understand how the family {\sc spor} in Figure~\ref{fig:sporadic} relates to the embedding we turn the surgery on the chain link with coefficients $(-2,-n-1,-2,-3,-3,-2^{[n-2]})$ into a surgery on a $6$ component chain link with coefficients $(-2,-n,2,-1,2,n)$ and lift the {\sc spor} bands to $L(p_n,q_n)$. As a result we obtain that the {\sc spor} bands lift to the red $0$--framed curve in the bottom diagram of Figure~\ref{fig:quotients}. This red curve corresponds to an unknot $K$ with framing $-1$ that satisfies $\phi(v_{K})=\pm e_{n+3}$. This basis vector is not a keystone, since blowing down the unknot $K$ does not prompt a sequence of blow downs yielding a single $0$-framed unknot. In fact, after blowing down $K$ we are led to a surgery diagram with coefficients $\{-2,-2,-2,-3,-2\}$ on a chain link in which the rightmost unknot is linked to the second. The corresponding embedding in the negative standard lattice of rank $5$ is

{\small
\[
\begin{array}{r|c@{\hspace{0.5mm}}c@{\hspace{0.5mm}}c@{\hspace{0.5mm}}c@{\hspace{0.5mm}}c}
    &e_1&e_2&e_3&e_4&e_5\\
\hline
v'_1     &  &    & - &   & + \\
v'_2     &  & + & + &    &\\
v'_3     & + & - &   &  &\\
v'_4     &  & + & - &   & -\\
v'_5     & - & - &   &  & 
\end{array}
\]}%
and the $3$--manifold described by this surgery is $S^{1}\times S^{2}$.

\begin{figure}
\centering
\executeiffilenewer{quotients.svg}{quotients.eps}%
{inkscape -z -D --file=quotients.svg %
--export-eps=quotients.eps --export-latex}%
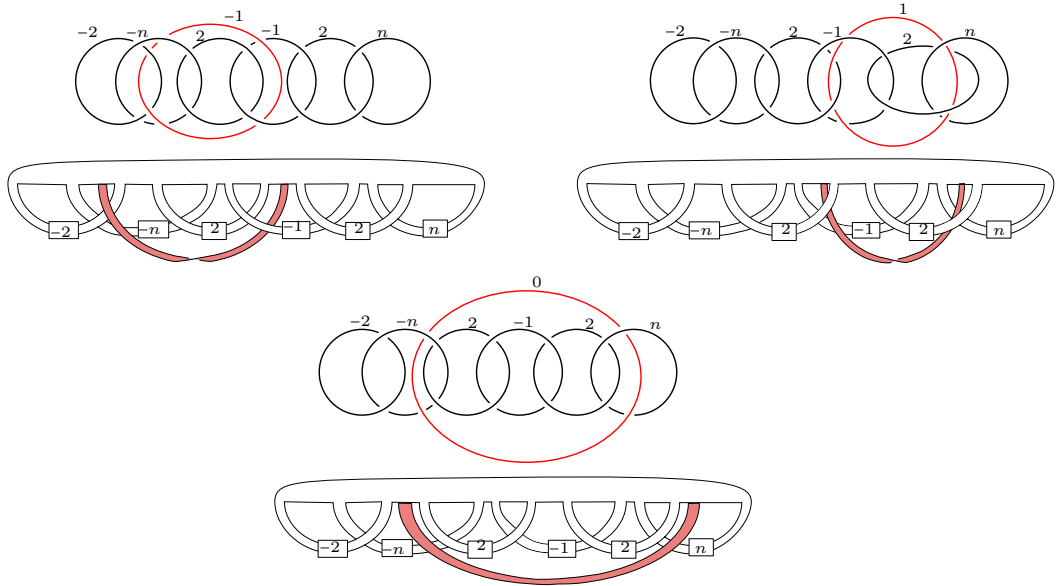%

\caption{Three copies of a chain link representing $L(16n^2-8n+1,16n-2)$ with its two-bridge quotient $K(16n^2-8n+1,16n-2)$ below are shown.  For each chain link an additional framed red curve is shown representing a knot in a lens space with an $S^1 \times S^2$ surgery; its corresponding banding of the two-bridge link is also shown.  
The red curves in the top diagrams represent the knots suggested by the embedding for the family of lens spaces $L(16n^2-8n+1,16n-2)$ which are the duals of the families {\sc bgiii} and {\sc bgv}.  In the bottom diagram the red curve represents the dual of the {\sc spor} band which we obtained by lifting to the lens space the corresponding band in the two bridge link $K(16n^2-8n+1,16n-2)$.}
\label{fig:quotients}
\end{figure}

Notice that the two first surgeries, the ones corresponding to $e_1$ and $e_3$, turn the more general family of two bridge links with projection $L(-2^{[k]},-n-1,-2,-2+k,-3,-2^{[n-2]})$ into $S^1\times S^2$. However, the last surgery described, which corresponds to the {\sc{spor}} knots, does not describe such a surgery in this more general family.


\section{Kirby calculus and tangle quotients}

\subsection{Correspondence between Figure~\ref{fig:eqsurgeries} and Figure~\ref{fig:chainsurgery}}\label{s:correspondence}

Using Kirby calculus we identify the colored curves in Figure~\ref{fig:eqsurgeries} with the corresponding duals to the {\sc bg}, the {\sc gokf} and the {\sc spor} knots shown in Figure~\ref{fig:chainsurgery}.
We will detail the transition from the first four types in Figure~\ref{fig:eqsurgeries} to Figure~\ref{fig:chainsurgery} and sketch the remaining.

\medskip

{\bf Types (1) and (4):}
Let $L(p,q)$, with $\frac{p}{q}=[a_1,\dots,a_n]^-$ and $a_i\geq 2$, be the boundary of the $4$--dimensional plumbing manifold $P(p,q)$ given by the Kirby diagram in Figure~\ref{fig:general}. Blowing up several times one of the final unknots we can turn its framing to $-1$ and after blowing down this $-1$--framed unknot it is not difficult to see that we can continue blowing up and down all the curves in the diagram until we obtain a new Kirby diagram having only positive framings $\geq 2$ and whose boundary is still $L(p,q)$. The $\ell$--tuple of positive framings $(a'_1,\dots,a'_\ell)$ is related to $(-a_1,\dots,-a_n)$ by an easy algorithm known as Riemenscheneider point rule \cite{pointrule}.    We have that $-L(p,q)=L(p,p-q)$ and a Kirby diagram for $-L(p,q)$ can be obtained from that of $L(p,q)$ by changing the sings of all the framings. It follows that the framings in the negative diagram associated to $L(p,q)$ and the framings in the \emph{negative} diagram associated to $-L(p,q)$ are related to one another by Riemenschneider point rule. Notice that $\frac{q}{p}+\frac{p-q}{p}=1$, which is the condition defining the framings $(-c_1,\dots,-c_\ell)$ from $(-b_1,\dots,-b_k)$ for types $(1)$ and $(4)$ in Lemma~\ref{l:lisca}. Therefore, a series of blow ups and blow downs changes the framings in types $(1)$ and $(4)$ as follows:
\begin{itemize}
\item[$(1)$] $(-b_k,-b_{k-1},\dots,-b_1,-2,-c_1,\dots,-c_{l-1},-c_\ell)\longrightarrow (-b_k,-b_{k-1},\dots,-b_1,-1,b_1,\dots,b_{k-1},b_k)$
\item[$(4)$] $(-b_k,-b_{k-1},\dots,-b_1-1,-2,-2,-1-c_1,\dots,-c_{l-1},-c_\ell)\longrightarrow (-b_k,-b_{k-1},\dots,-b_1-1,-2,-1,2,b_1,\dots,b_{k-1},b_k).$
\end{itemize}
In order to obtain the correspondence between Figure~\ref{fig:eqsurgeries} and Figure~\ref{fig:chainsurgery} for types (1) and (4), we need to perform the Kirby moves that change the negative plumbing associated to $-L(p,q)$ into the positive one, while taking into account how do the colored curves in Figure~\ref{fig:eqsurgeries} change. 

\smallskip
The case of type $(4)$ is worked out in full detail in Figure~\ref{fig:neg-standBGII}. The second diagram is obtained from type $(4)$ in Figure~\ref{fig:eqsurgeries} by blowing up the clasp between the $-2$ and the $-c_1-1$ framed unknots. Several blow downs and isotopies lead to the fifth diagram in Figure~\ref{fig:neg-standBGII}. At this point we start performing the above described suite of blowing ups and downs to turn the framings $-c_i$ to positive integers $\geq 2$; the first step corresponds to the sixth diagram. The third to last diagram corresponds to the mirror image of the second diagram in Figure~\ref{fig:chainsurgery} with opposite framings. This means that the corresponding lens spaces have the opposite orientation. It follows that type $(4)$ in Figure~\ref{fig:eqsurgeries} corresponds to the mirror image of {\sc bgii} understood as in Figure~\ref{fig:chainsurgery}.

\begin{figure}
\centering
\executeiffilenewer{11to13BGII.svg}{11to13BGII.eps}%
{inkscape -z -D --file=11to13BGII.svg %
--export-eps=11to13BGII.eps --export-latex}%
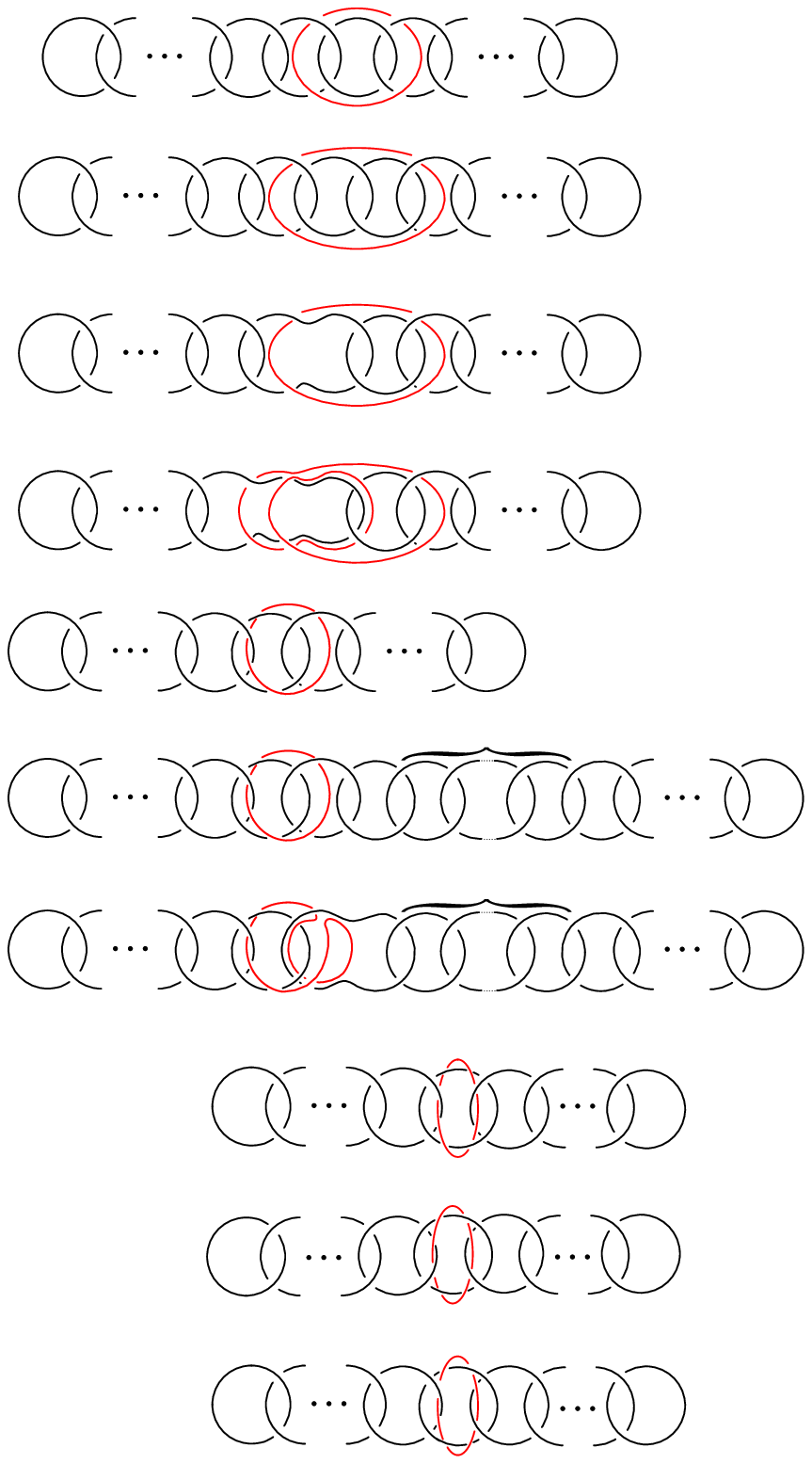%

\caption{Kirby calculus from Type (4), Figure~\ref{fig:eqsurgeries} to {\sc bgii}, Figure~\ref{fig:chainsurgery}.}
\label{fig:neg-standBGII}
\end{figure}

\smallskip
The case of type (1) is very similar to that of type (4). On the one hand, if we consider the lens space diagram, in black, with the red curve, it is clear that when changing the $-c_i$ to positive integers the red curve is unaffected. In this way we obtain that the red curve in type $(1)$ corresponds to {\sc bgi} in Figure~\ref{fig:chainsurgery}. 

The case of type $(1)$ with the green curve is slightly more delicate. The process of turning the $-c_i$ framings to positive integers yields the second diagram in Figure~\ref{fig:neg-standGOFK}. Notice that the framing of the green curve changes to $0$ and that the clasps between each two consecutive unknots are opposite in both sides of the central $-1$. The black diagram cannot be isotoped into a chain of unknots with only right claps without twisting the green curve: in the process the last unknot gains $k$ half twists as illustrated in the last diagram of Figure~\ref{fig:neg-standGOFK}. This diagram shows that the green curve in Family $1$ of Figure~\ref{fig:eqsurgeries} corresponds to the {\sc gokf} curve in Figure~\ref{fig:chainsurgery}.  Note that in Figure~\ref{fig:chainsurgery} there are $2k$ unknots at each side of the $-1$, this can always be achieved by blowing up once one of the clasps in the last diagram of Figure~\ref{fig:neg-standGOFK}. In Figure~\ref{fig:chainsurgery} the $2k$ half twists are taken into account by means of the $\frac{1}{k}$ framed curve.

\begin{figure}
\centering
\executeiffilenewer{11to13GOFK.svg}{11to13GOFK.eps}%
{inkscape -z -D --file=11to13GOFK.svg %
--export-eps=11to13GOFK.eps --export-latex}%
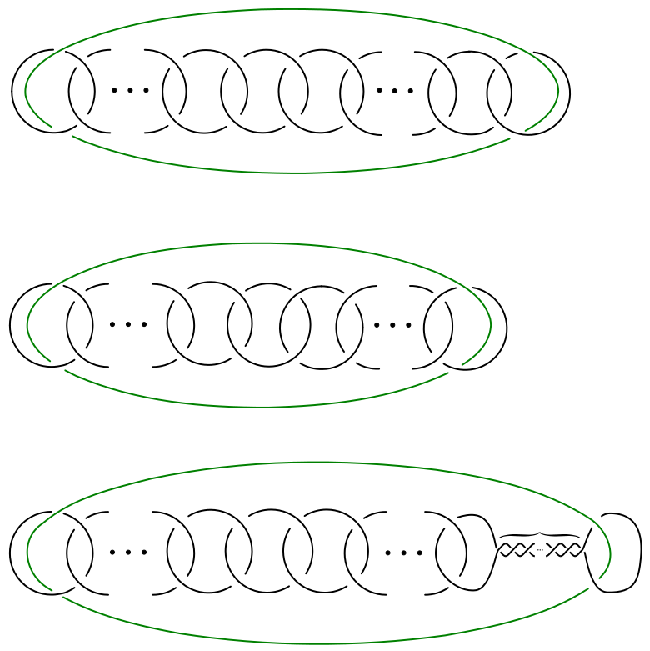%

\caption{Kirby calculus from the green knot in Type (1), Figure~\ref{fig:eqsurgeries} to {\sc gofk}, Figure~\ref{fig:chainsurgery}}
\label{fig:neg-standGOFK}
\end{figure}

\medskip
{\bf Type (2):}
The transformation of type $(2)$ in Figure~\ref{fig:eqsurgeries} is detailed in Figure~\ref{fig:neg-standBGIV}. We start changing the two $-2$--chains into single unknots with framings $t$ and $s$. At this point we blow down the two black $-1$--framed curves, changing the framings of the red and green curves, and obtaining the last diagram in Figure~\ref{fig:neg-standBGIV} which shows that the red and green curve in type $(2)$ Figure~\ref{fig:eqsurgeries} correspond to the {\sc bgiv} in Figure~\ref{fig:chainsurgery} with $t,s\geq 0$. 

\begin{figure}
\centering
\executeiffilenewer{From11to13family2.svg}{From11to13family2.eps}%
{inkscape -z -D --file=From11to13family2.svg %
--export-eps=From11to13family2.eps --export-latex}%
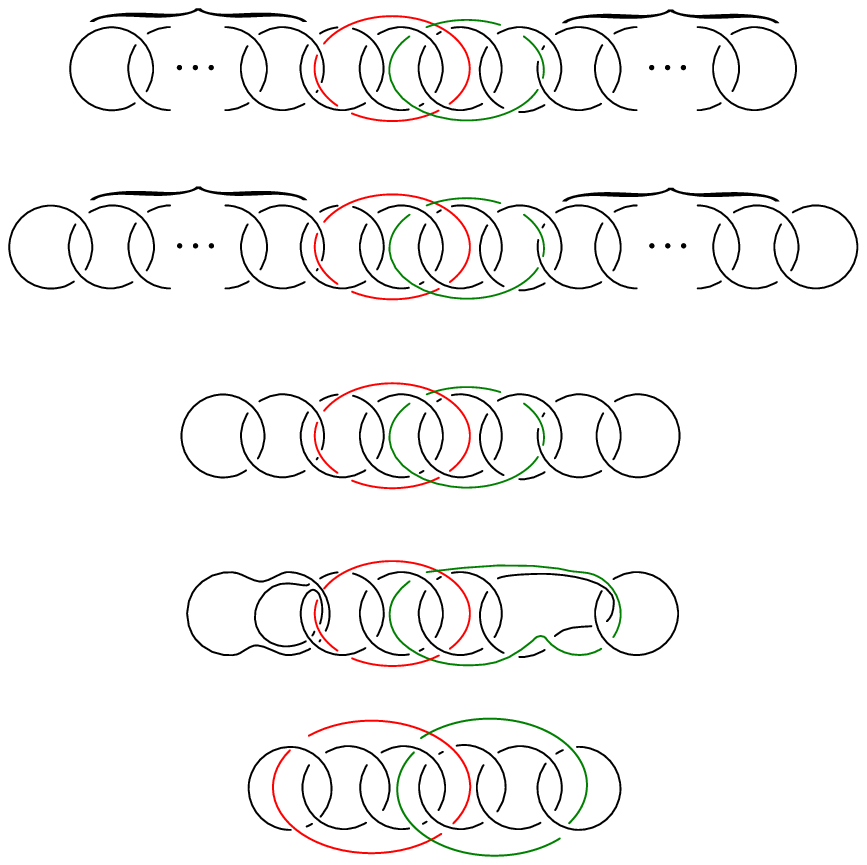%

\caption{Kirby calculus from Type (2), Figure~\ref{fig:eqsurgeries} to {\sc bgiv}, Figure~\ref{fig:chainsurgery}.}
\label{fig:neg-standBGIV}
\end{figure}

\medskip
{\bf Type (3):}
The green curve in type $(3)$ in Figure~\ref{fig:eqsurgeries} coincides with the {\sc bgv} diagram in Figure~\ref{fig:chainsurgery}. In order to see this we start by changing the two final $-2$--chains in type $(3)$ into two unknots of framings $t+1$ and $s+1$ (just like for type $(2)$). Since the green curve is unaffected by this change in the black diagram it follows that it corresponds to {\sc bgv} in Figure~\ref{fig:chainsurgery} with $s,t\geq 0$. 

The case of the red curve in type $(3)$ is very similar. This time, the last blow down changing the $-2$--chain with $t$ unknots, changes the framing of the red curve to $0$. It is then easy to see that it corresponds to {\sc bgiii} in Figure~\ref{fig:chainsurgery} with $s,t\geq 0$. 

Finally, the case of the blue curve in type $(3)$, with a surgery changing the lens space into $S^1\times S^2$ when $t=1$, is considered in Figure~\ref{fig:neg-standSPOR}. We start blowing up the clasp between the $-3$--framed unknot and the leftmost unknot in the $-2$--chain and changing the framing of the first unknot from $-2$ to $2$. After several blow downs we arrive to the third diagram of Figure~\ref{fig:neg-standSPOR}. The final blow down of the $-1$--framed unknot changes the framing of the blue curve to $0$ showing that it corresponds to the {\sc spor} curve in Figure~\ref{fig:chainsurgery} with $s,t\geq 0$.

\begin{figure}
\centering
\executeiffilenewer{11to13SPOR.svg}{11to13SPOR.eps}%
{inkscape -z -D --file=11to13SPOR.svg %
--export-eps=11to13SPOR.eps --export-latex}%
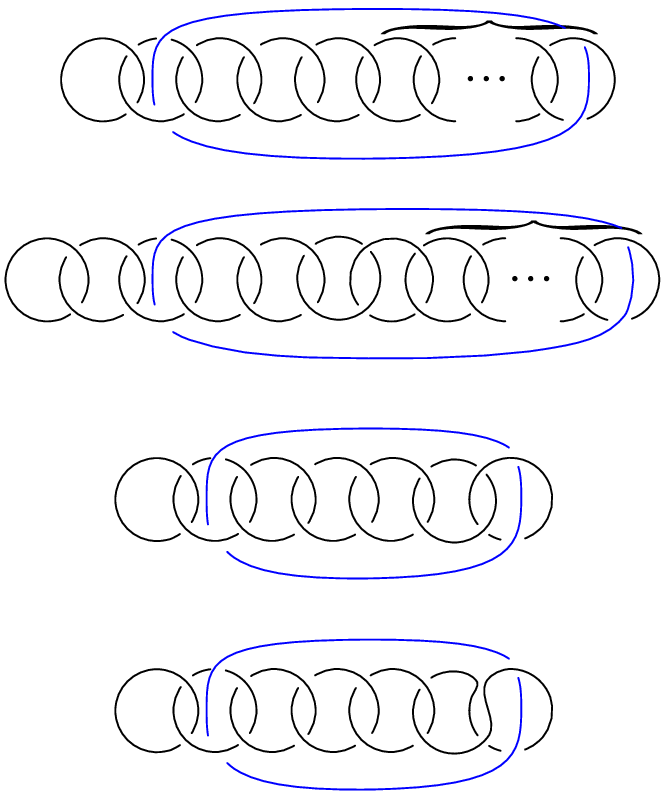%

\caption{Kirby calculus from blue knot in Type (3) with $t=1$, Figure~\ref{fig:eqsurgeries} to {\sc spor}, Figure~\ref{fig:chainsurgery}}
\label{fig:neg-standSPOR}
\end{figure}

\medskip
{\bf Types (5), (6), (7):}
Type $(5)$ in Figure~\ref{fig:eqsurgeries} is the same as {\sc bgiii} in Figure~\ref{fig:chainsurgery}. The correspondence is established changing the two $-2$--chains into two unknots with framings $t+1$ and $s+1$. This yields a chain of $6$ unknots with framings $(-t-2,-s-2,-2,t+1,-2,s+1)$ which coincides with the framings in the third diagram in Figure~\ref{fig:chainsurgery} after rescaling the parameter $-t-2$ to $t+1$. In the process the red curve's framing will change to $0$ (cf.\ type $(2)$) yielding {\sc bgiii} in Figure~\ref{fig:chainsurgery} with $s\geq 0$ and $t\leq -3$.  

By the same argument, type $(6)$ in Figure~\ref{fig:eqsurgeries} corresponds to the red {\sc bgiv} from Figure~\ref{fig:chainsurgery} with $s\geq 0$ and $t\leq -3$ (after rescaling of the parameter $t$ to $-t-3$).  

Finally, the transformation of type $(7)$ starts again by changing the $-2$--chains into two unknots with framings $t+1$ and $s+1$. This does not change the red curve. We obtain a chain of six unknots with framings $(-t-3,-2,-3-s,-2,t+1,-1,s+1)$ that rescaled with $t$ replaced by $-s-4$ and $s$ by $t-1$ coincides with the {\sc bgv} diagram in Figure~\ref{fig:chainsurgery} for $s\leq -4$ and $t\geq 1$.

\subsection{From chain link surgery descriptions to tangle descriptions}\label{sec:chaintotangle}
Figures~\ref{fig:BGIchaintotangle}, \ref{fig:BGIIchaintotangle}
, \ref{fig:GOFKchaintotangle}, \ref{fig:BGIIIandBGVandSPORchaintotangle}, and \ref{fig:BGIVchaintotangle}  explicitly show how the quotient of these knots with $S^1 \times S^2$ surgeries by the involution $u$ correspond to the bandings of Figures~\ref{fig:BGIandII}, \ref{fig:BGIIIandV}, \ref{fig:BGIVandIV2}, \ref{fig:twobandings}, and \ref{fig:sporadic}.

 \begin{figure}
\centering
\includegraphics[width=4in]{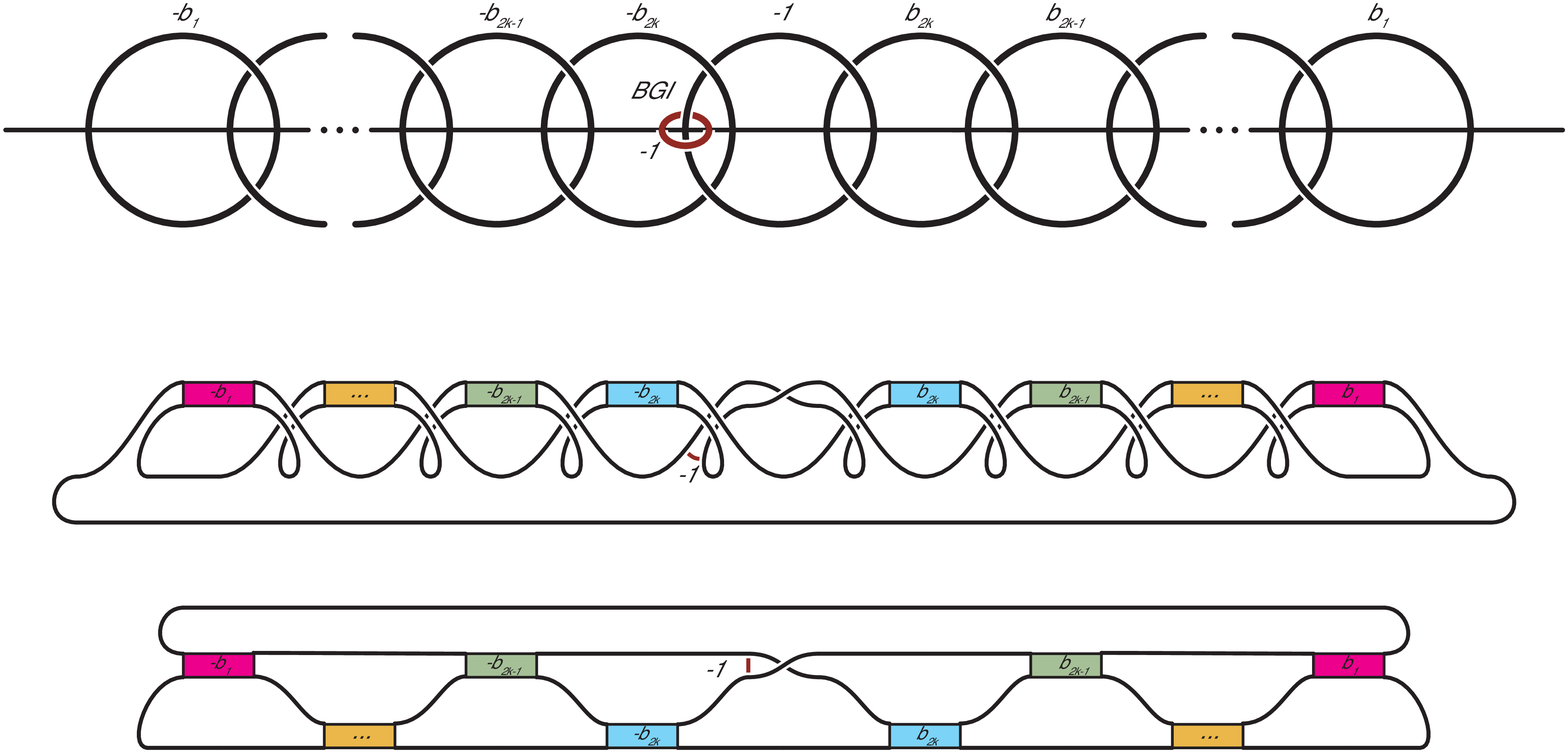}
\caption{}
\label{fig:BGIchaintotangle}
\end{figure}

 \begin{figure}
\centering
\includegraphics[width=4in]{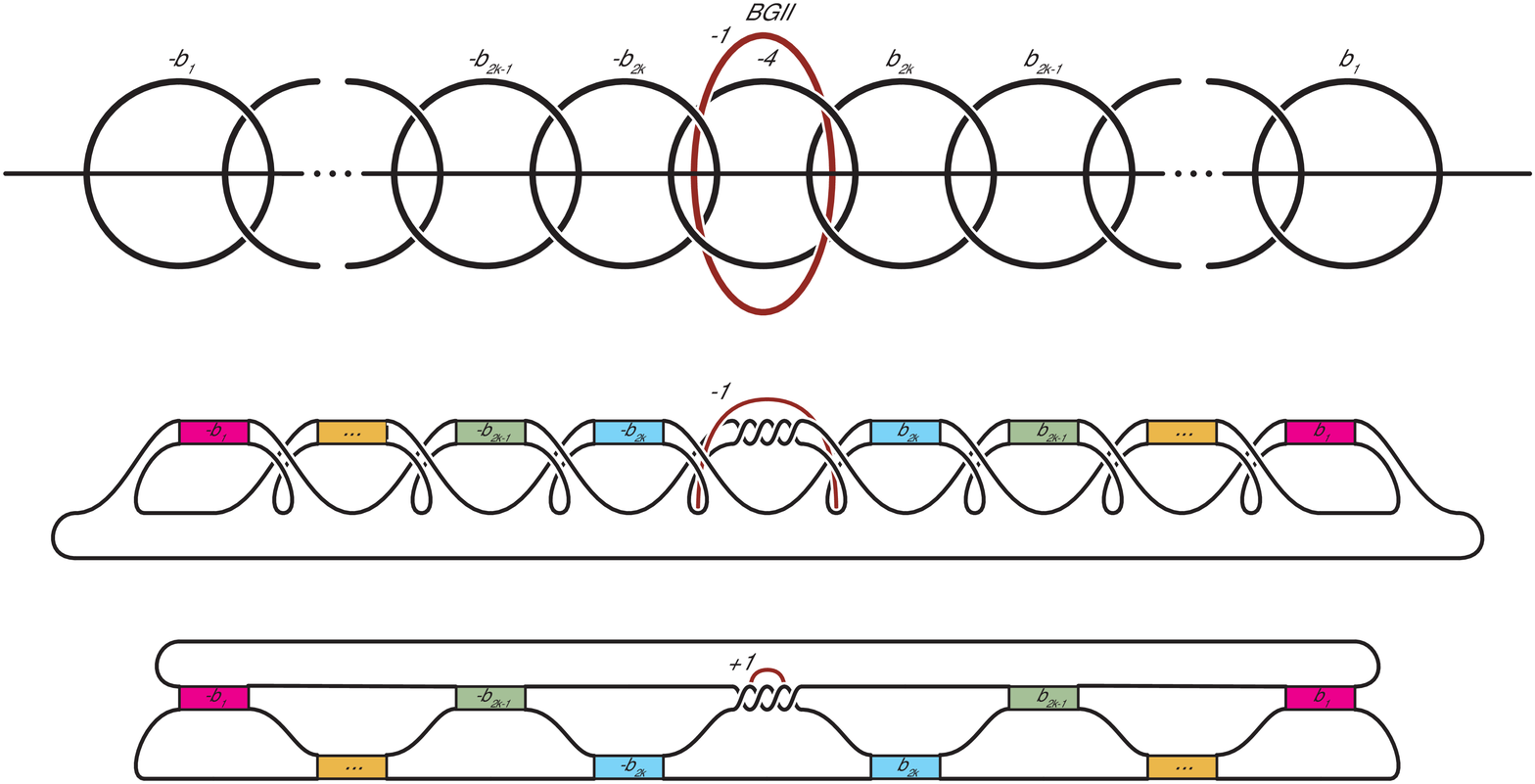}
\caption{}
\label{fig:BGIIchaintotangle}
\end{figure}

 \begin{figure}
\centering
\includegraphics[width=4in]{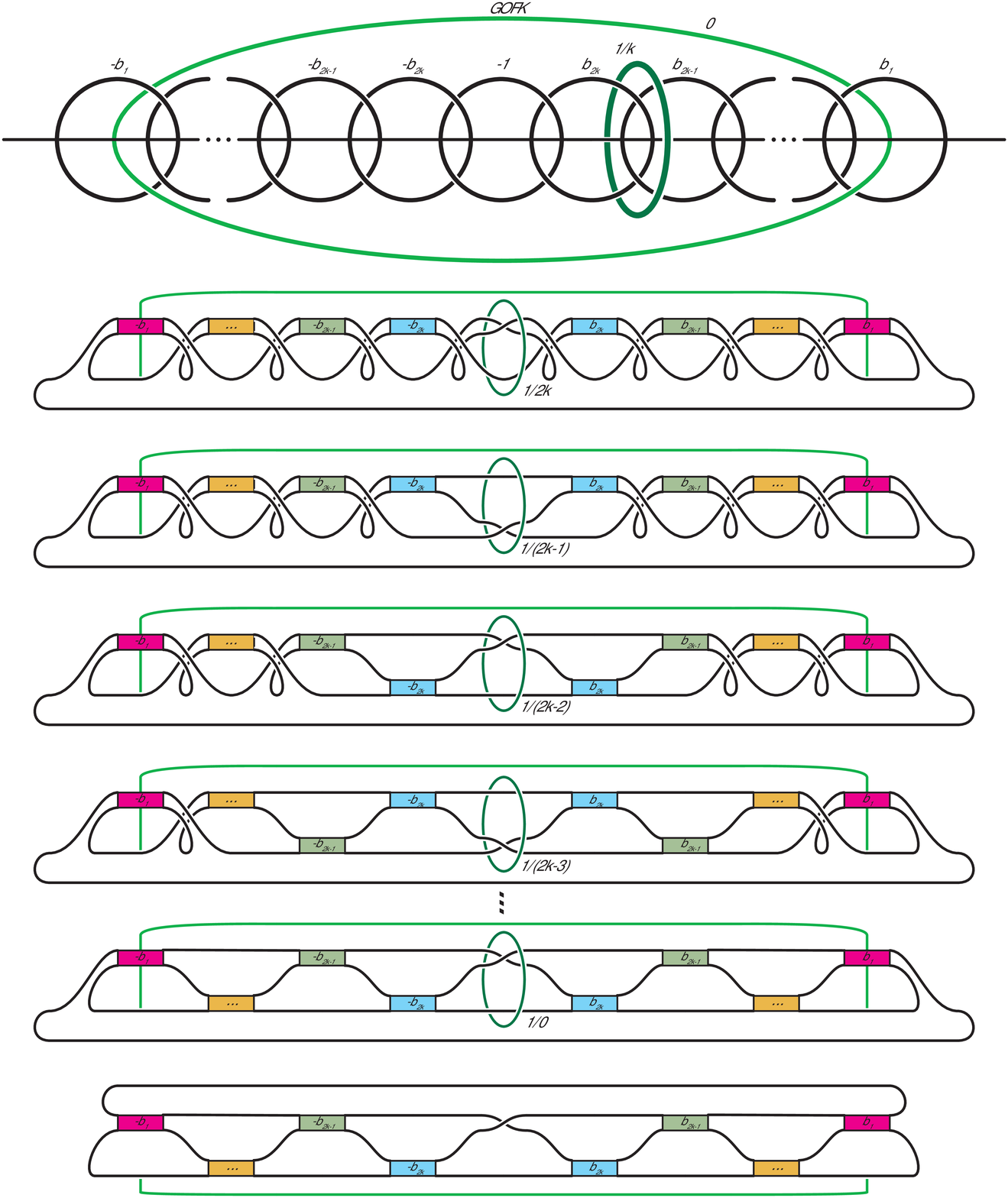}
\caption{}
\label{fig:GOFKchaintotangle}
\end{figure}

 \begin{figure}
\centering
\includegraphics[width=5in]{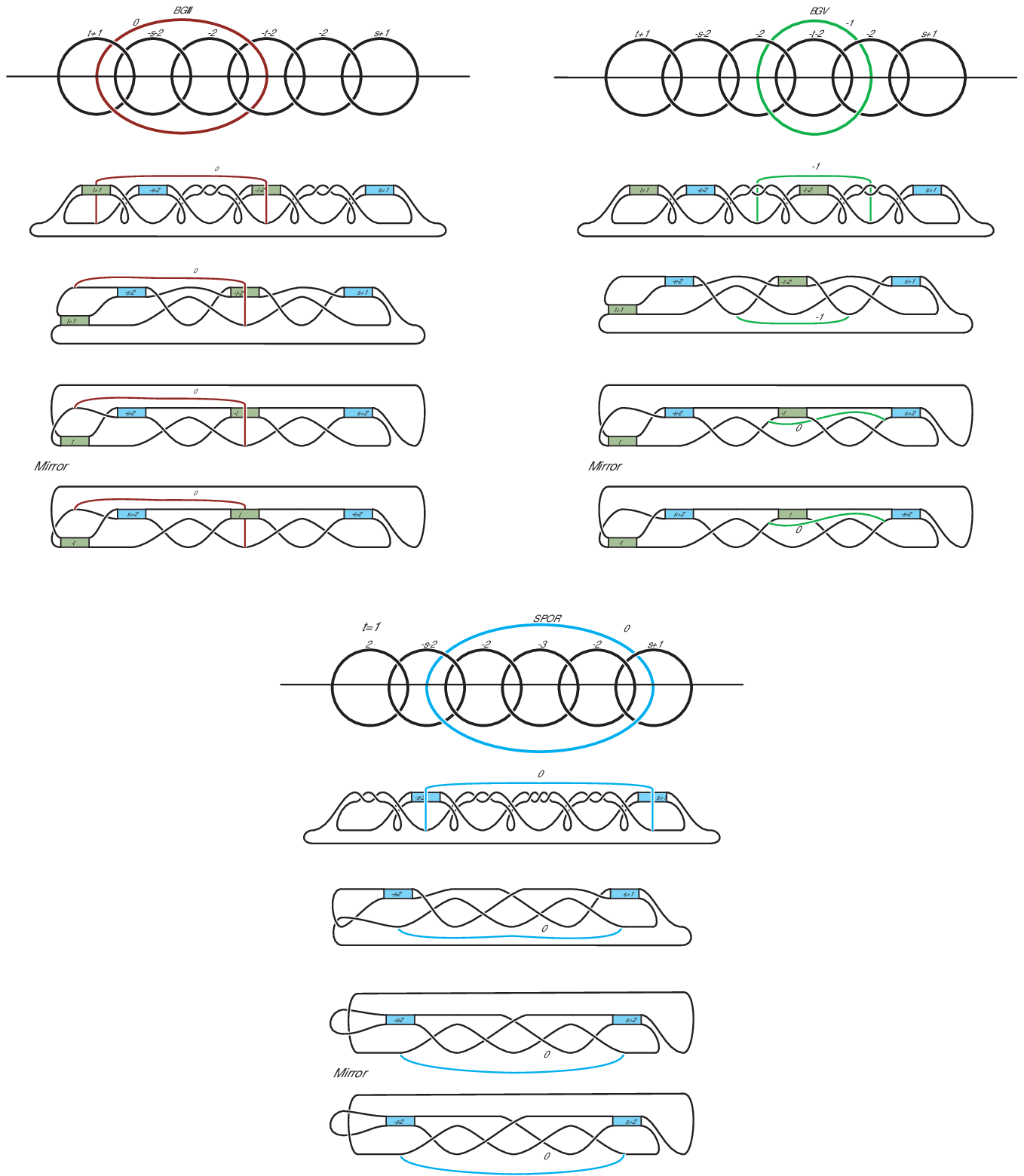}
\caption{}
\label{fig:BGIIIandBGVandSPORchaintotangle}
\end{figure}

 \begin{figure}
\centering
\includegraphics[width=5in]{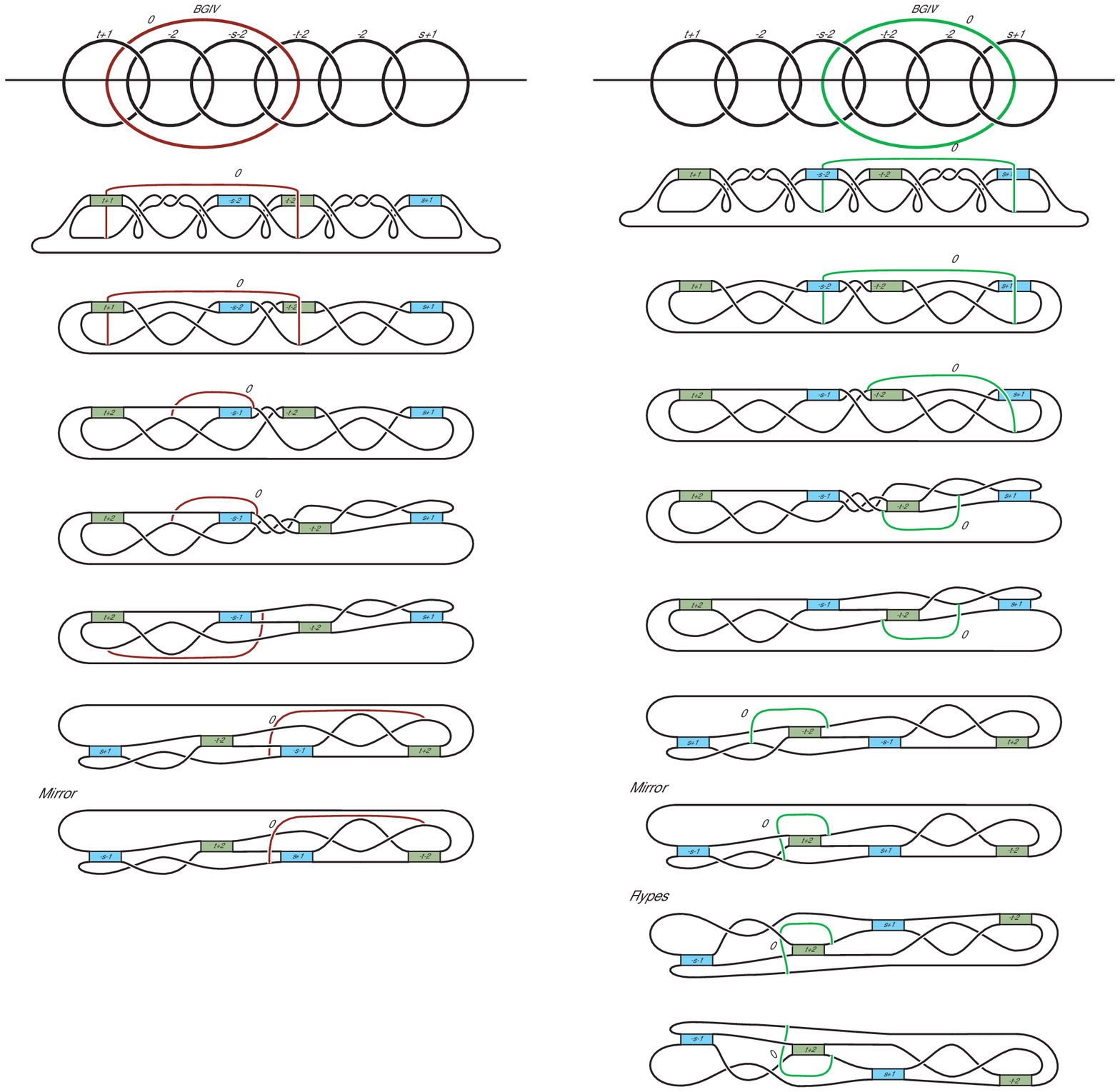}
\caption{}
\label{fig:BGIVchaintotangle}
\end{figure}

\section{Spherical braids: A proof of Theorem~\ref{thm:sphericalbraid}.}\label{sec:braids}

\begin{proof}[Proof of Theorem~\ref{thm:sphericalbraid}]
Any identification of a solid torus containing a braid with a Heegaard solid torus of $S^1 \times S^2$, produces a spherical braid.  Hence the Berge-Gabai knots are spherical braids.   (Note that any two such identifications of a solid torus with a Heegaard torus are related by isotopy within $S^1 \times S^2$, mirroring, and inverting the $S^1$ direction.  Since Berge-Gabai knots in solid tori are invariant under inverting the $S^1$ direction, each Berge-Gabai knot in a solid torus gives a unique Berge-Gabai knot in $S^1 \times S^2$ up to mirroring.)

View the genus one fibered knot in $S^1 \times S^2$ as a Hopf band plumbed onto an annulus with trivial monodromy as displayed on the right-hand side of Figure~\ref{fig:hopfjump}. Figure~\ref{fig:isotopiesofGOFcurves} shows the effect of a particular isotopy of the once-punctured torus fiber, realizing the monodromy, upon the core of the annulus (red) and the core of the Hopf band (blue).  While most of the isotopy of Figure~\ref{fig:isotopiesofGOFcurves} occurs near the original fiber, the last stage of the isotopy exploits that the ambient manifold is $S^1 \times S^2$ much like the ``lightbulb trick'' and is highlighted in Figure~\ref{fig:hopfjump}.  Using this isotopy or its inverse, one may arrange any curve that lies on the fiber to run along the train track  shown on the left-hand side of Figure~\ref{fig:gofkbraid}.  (See, for example, \cite{penner-harer} for the fundamentals of train tracks.)  The right-hand side of Figure~\ref{fig:gofkbraid} shows an isotopy of the fiber and the train track so that any curve carried by the train track is a spherical braid.  Thus every \gofk\ knot is a spherical braid.

\begin{figure}
\centering
\includegraphics[height=2in]{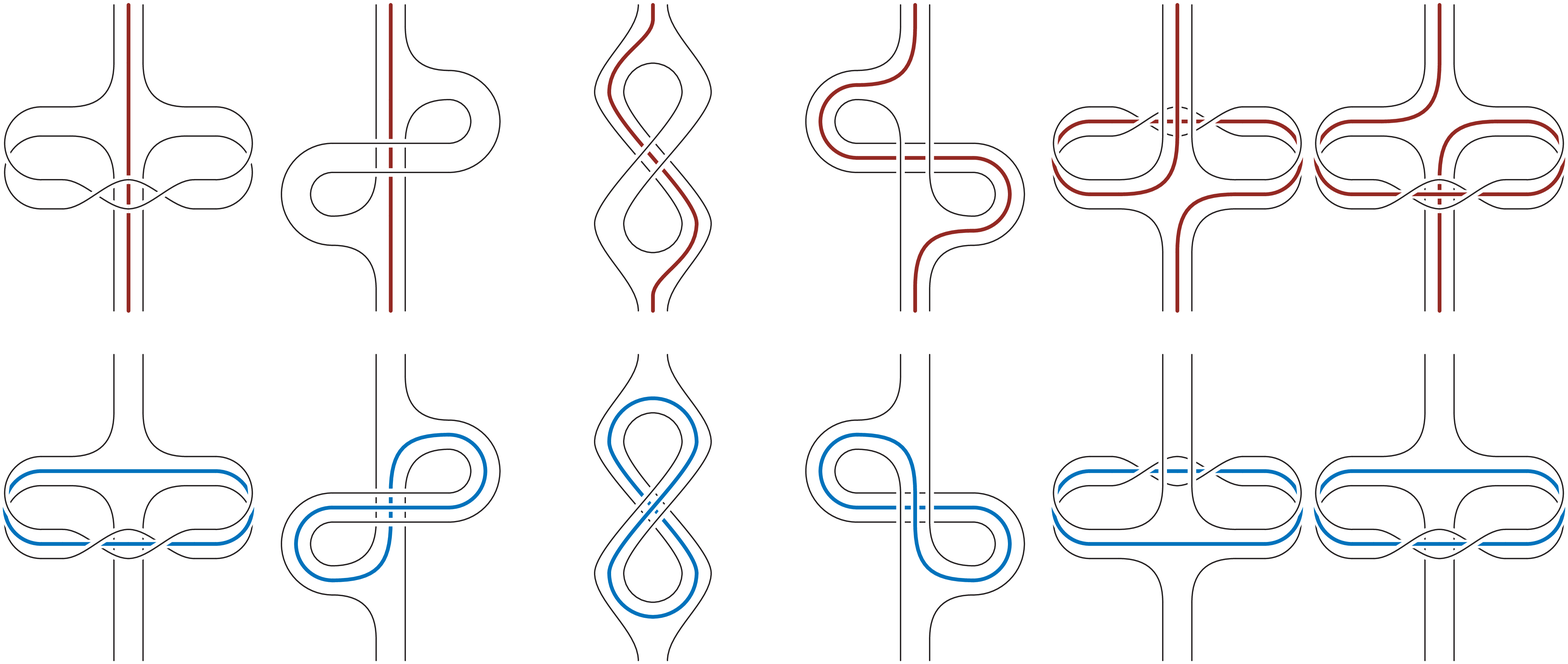}
\caption{}
\label{fig:isotopiesofGOFcurves}
\end{figure}

\begin{figure}
\centering
\includegraphics[height=1in]{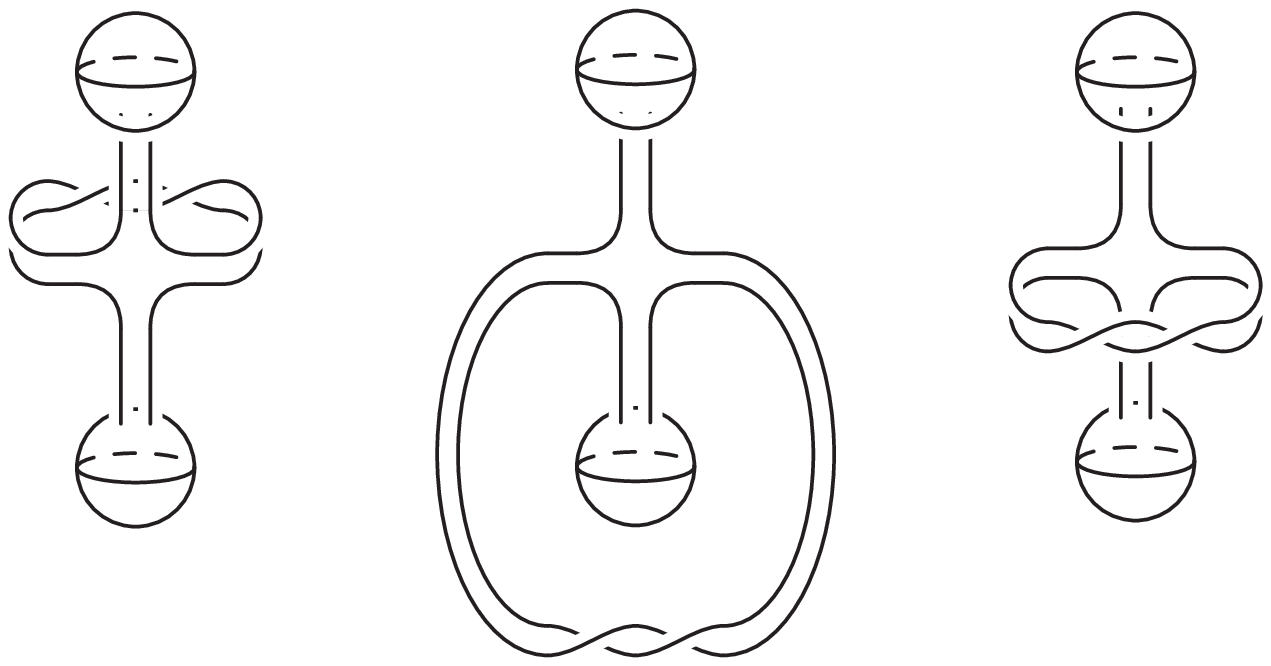}
\caption{}
\label{fig:hopfjump}
\end{figure}

\begin{figure}
\centering
\includegraphics[height=1in]{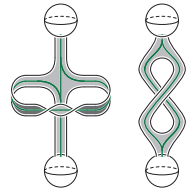}
\caption{By an isotopy of the once-punctured torus containing this train track into a more ``flattened'' form, curves carried by the train track are seen to be spherical braids.}
\label{fig:gofkbraid}
\end{figure}

\begin{figure}
\centering
\includegraphics[width=5.5in]{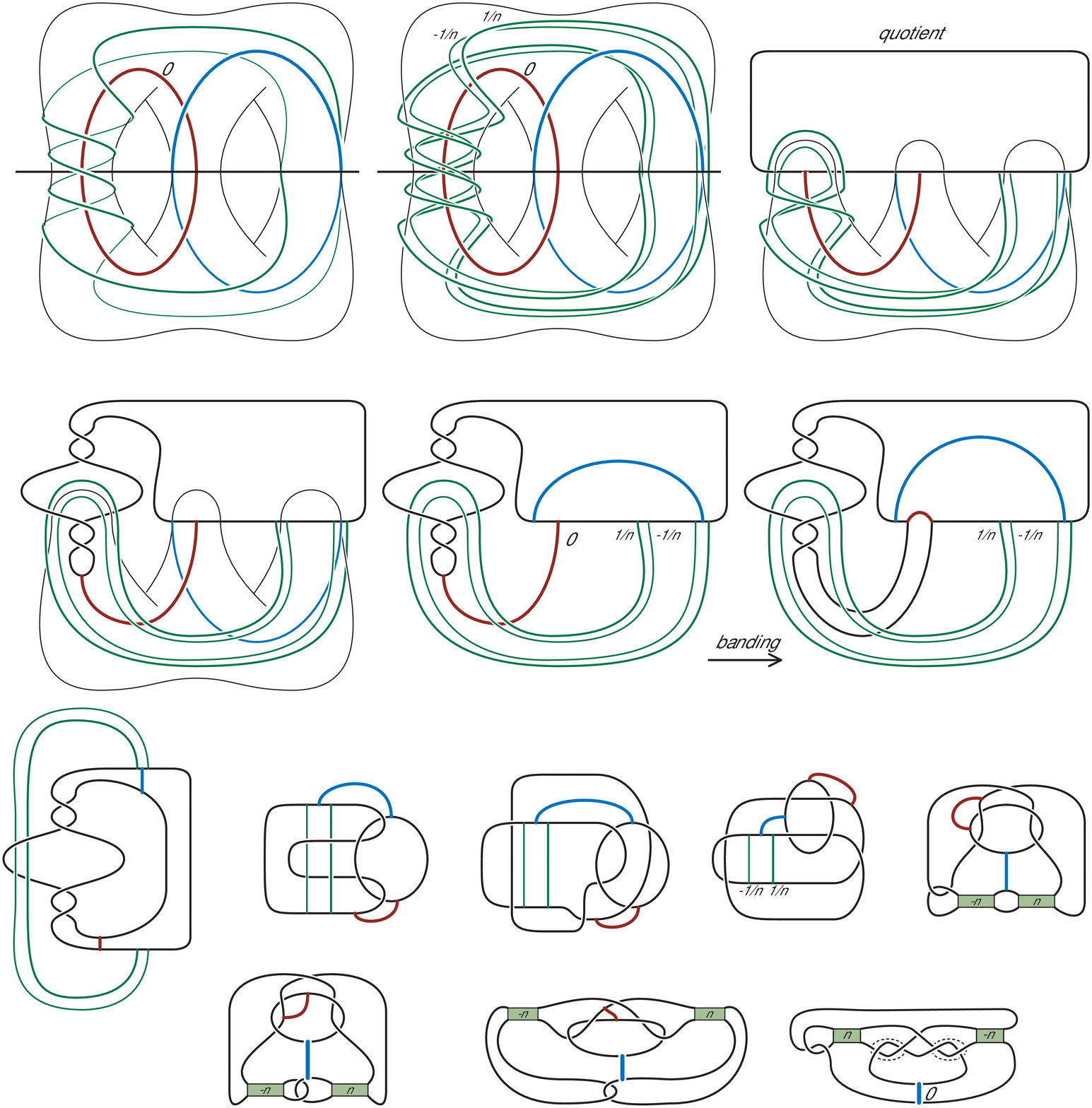}
\caption{}
\label{fig:sporDPtoTangle}
\end{figure}

The bottom right picture in Figure~\ref{fig:sporDPtoTangle} is our tangle version of the sporadic knots in $S^1 \times S^2$ also shown in the second picture of Figure~\ref{fig:sporadic}.  The top left picture of Figure~\ref{fig:sporDPtoTangle} gives a doubly primitive presentation of these sporadic knots, in blue, in terms of Dehn twists along the green curve and $0$--surgery on the red curve as described in the caption.  In this picture one may observe that right handed Dehn twists along the green curve keeps the blue curve braided about the red curve.  One may also check that after an isotopy, left handed Dehn twists will braid the blue curve about the red too.   Hence after $0$--surgery, the resulting blue curve is a spherical braid in $S^1 \times S^2$.
\end{proof}

\bibliographystyle{amsplain}
\bibliography{twobridge}

\providecommand{\bysame}{\leavevmode\hbox to3em{\hrulefill}\thinspace}
\providecommand{\MR}{\relax\ifhmode\unskip\space\fi MR }
\providecommand{\MRhref}[2]{%
  \href{http://www.ams.org/mathscinet-getitem?mr=#1}{#2}
}
\providecommand{\href}[2]{#2}
\begin{thebibliography}{10}

\bibitem{adams}
Colin~C. Adams, \emph{Volumes of hyperbolic {$3$}-orbifolds with multiple
  cusps}, Indiana Univ. Math. J. \textbf{41} (1992), no.~1, 149--172.
  \MR{1160907 (93c:57011)}

\bibitem{baileyrolfsen}
James Bailey and Dale Rolfsen, \emph{An unexpected surgery construction of a
  lens space}, Pacific J. Math. \textbf{71} (1977), no.~2, 295--298.
  \MR{0488061 (58 \#7633)}

\bibitem{cgofkils}
Kenneth~L. Baker, \emph{Counting genus one fibered knots in lens spaces},
  preprint arXiv:math.GT/0510391.

\bibitem{bakerI}
\bysame, \emph{Surgery descriptions and volumes of {B}erge knots. {I}. {L}arge
  volume {B}erge knots}, J. Knot Theory Ramifications \textbf{17} (2008),
  no.~9, 1077--1097. \MR{2457837 (2009h:57025)}

\bibitem{bakerII}
\bysame, \emph{Surgery descriptions and volumes of {B}erge knots. {II}.
  {D}escriptions on the minimally twisted five chain link}, J. Knot Theory
  Ramifications \textbf{17} (2008), no.~9, 1099--1120. \MR{2457838
  (2009h:57026)}

\bibitem{bakerbuck}
Kenneth~L.\ Baker and Dorothy Buck, \emph{The classification of rational
  subtangle replacements between rational tangles}, preprint, \emph{To appear
  in} Geom. Topol.

\bibitem{bakergrigsby}
Kenneth~L. Baker and J.~Elisenda Grigsby, \emph{Grid diagrams and {L}egendrian
  lens space links}, J. Symplectic Geom. \textbf{7} (2009), no.~4, 415--448.
  \MR{2552000 (2011g:57003)}

\bibitem{bakergrigsbyhedden}
Kenneth~L. Baker, J.~Elisenda Grigsby, and Matthew Hedden, \emph{Grid diagrams
  for lens spaces and combinatorial knot {F}loer homology}, Int. Math. Res.
  Not. IMRN (2008), no.~10, Art. ID rnm024, 39. \MR{2429242 (2009h:57012)}

\bibitem{bergeII}
John Berge, \emph{Some knots with surgeries yielding lens spaces}, Unpublished
  manuscript.

\bibitem{bergeS1xD2}
\bysame, \emph{The knots in {$D^2\times S^1$} which have nontrivial {D}ehn
  surgeries that yield {$D^2\times S^1$}}, Topology Appl. \textbf{38} (1991),
  no.~1, 1--19. \MR{1093862 (92d:57005)}

\bibitem{bhw}
Steven~A. Bleiler, Craig~D. Hodgson, and Jeffrey~R. Weeks, \emph{Cosmetic
  surgery on knots}, Proceedings of the {K}irbyfest ({B}erkeley, {CA}, 1998),
  Geom. Topol. Monogr., vol.~2, Geom. Topol. Publ., Coventry, 1999, pp.~23--34
  (electronic). \MR{1734400 (2000j:57034)}

\bibitem{bleilerlitherland}
Steven~A. Bleiler and Richard~A. Litherland, \emph{Lens spaces and {D}ehn
  surgery}, Proc. Amer. Math. Soc. \textbf{107} (1989), no.~4, 1127--1131.
  \MR{984783 (90e:57031)}

\bibitem{diffeolenticulaires}
Francis Bonahon, \emph{Diff\'eotopies des espaces lenticulaires}, Topology
  \textbf{22} (1983), no.~3, 305--314. \MR{710104 (85d:57008)}

\bibitem{brown}
Kenneth~S. Brown, \emph{Trees, valuations, and the {B}ieri-{N}eumann-{S}trebel
  invariant}, Invent. Math. \textbf{90} (1987), no.~3, 479--504. \MR{914847
  (89e:20060)}

\bibitem{cebanu}
Radu Cebanu, \emph{Une g\'en\'eralisation de la propri\'et\'e {"R"}}, Ph.D.
  thesis, Universit\'e du Qu\'ebec \`a Montr\'eal, Novemeber 2012.

\bibitem{cgls}
Marc Culler, C.~McA. Gordon, J.~Luecke, and Peter~B. Shalen, \emph{Dehn surgery
  on knots}, Ann. of Math. (2) \textbf{125} (1987), no.~2, 237--300. \MR{881270
  (88a:57026)}

\bibitem{koyabel}
Isabel~K. Darcy, Kai Ishihara, Ram~K. Medikonduri, and Koya Shimokawa,
  \emph{Rational tangle surgery and xer recombination on catenanes}, preprint
  arXiv:1108.0724 [math.GT].

\bibitem{therons}
Ronald Fintushel and Ronald~J. Stern, \emph{Constructing lens spaces by surgery
  on knots}, Math. Z. \textbf{175} (1980), no.~1, 33--51. \MR{595630
  (82i:57009a)}

\bibitem{gabai:fato3mIII}
David Gabai, \emph{Foliations and the topology of {$3$}-manifolds. {II}}, J.
  Differential Geom. \textbf{26} (1987), no.~3, 461--478. \MR{910017
  (89a:57014a)}

\bibitem{gabaiI}
\bysame, \emph{Surgery on knots in solid tori}, Topology \textbf{28} (1989),
  no.~1, 1--6. \MR{991095 (90h:57005)}

\bibitem{gabaiII}
\bysame, \emph{{$1$}-bridge braids in solid tori}, Topology Appl. \textbf{37}
  (1990), no.~3, 221--235. \MR{1082933 (92b:57011)}

\bibitem{gordonsatellite}
C.~McA. Gordon, \emph{Dehn surgery and satellite knots}, Trans. Amer. Math.
  Soc. \textbf{275} (1983), no.~2, 687--708. \MR{682725 (84d:57003)}

\bibitem{gl:kadbtc}
C.~McA. Gordon and J.~Luecke, \emph{Knots are determined by their complements},
  J. Amer. Math. Soc. \textbf{2} (1989), no.~2, 371--415. \MR{965210
  (90a:57006a)}

\bibitem{gl}
\bysame, \emph{Knots are determined by their complements}, J. Amer. Math. Soc.
  \textbf{2} (1989), no.~2, 371--415. \MR{965210 (90a:57006a)}

\bibitem{greene}
Joshua~Evan Greene, \emph{The lens space realization problem}, preprint
  arXiv:1010.6257v1 [math.GT], \emph{To appear in} Ann. of Math.

\bibitem{hedden-positive}
Matthew Hedden, \emph{Notions of positivity and the {O}zsv\'ath-{S}zab\'o
  concordance invariant}, J. Knot Theory Ramifications \textbf{19} (2010),
  no.~5, 617--629. \MR{2646650 (2011j:57020)}

\bibitem{HeddenSimple}
\bysame, \emph{On {F}loer homology and the {B}erge conjecture on knots
  admitting lens space surgeries}, Trans. Amer. Math. Soc. \textbf{363} (2011),
  no.~2, 949--968. \MR{2728591}

\bibitem{diffeooflensspace}
Craig Hodgson and J.~H. Rubinstein, \emph{Involutions and isotopies of lens
  spaces}, Knot theory and manifolds ({V}ancouver, {B}.{C}., 1983), Lecture
  Notes in Math., vol. 1144, Springer, Berlin, 1985, pp.~60--96. \MR{823282
  (87h:57028)}

\bibitem{hot}
Tatsuo Homma, Mitsuyuki Ochiai, and Moto-o Takahashi, \emph{An algorithm for
  recognizing {$S^{3}$} in {$3$}-manifolds with {H}eegaard splittings of genus
  two}, Osaka J. Math. \textbf{17} (1980), no.~3, 625--648. \MR{591141
  (82i:57013)}

\bibitem{JankinsNeumann}
Mark Jankins and Walter~D. Neumann, \emph{Lectures on {S}eifert manifolds},
  Brandeis Lecture Notes, vol.~2, Brandeis University, Waltham, MA, 1983.
  \MR{741334 (85j:57015)}

\bibitem{ky}
Teruhisa Kadokami and Yuichi Yamada, \emph{Lens space surgeries along certain
  2-component links related with {P}ark's rational blow down, and
  {R}eidemeister-{T}uraev torsion}, preprint arXiv:1204.4577 [math.GT].

\bibitem{Le}
Ana~G. Lecuona, \emph{On the slice-ribbon conjecture for {M}ontesinos knots},
  Trans. Amer. Math. Soc. \textbf{364} (2012), no.~1, 233--285. \MR{2833583
  (2012i:57014)}

\bibitem{lisca}
Paolo Lisca, \emph{Lens spaces, rational balls and the ribbon conjecture},
  Geom. Topol. \textbf{11} (2007), 429--472. \MR{2302495 (2008a:57008)}

\bibitem{moser}
Louise Moser, \emph{Elementary surgery along a torus knot}, Pacific J. Math.
  \textbf{38} (1971), 737--745. \MR{0383406 (52 \#4287)}

\bibitem{negamiokita}
Seiya Negami and Kazuo Okita, \emph{The splittability and triviality of
  {$3$}-bridge links}, Trans. Amer. Math. Soc. \textbf{289} (1985), no.~1,
  253--280. \MR{779063 (86h:57008)}

\bibitem{ni}
Yi~Ni, \emph{Knot {F}loer homology detects fibred knots}, Invent. Math.
  \textbf{170} (2007), no.~3, 577--608. \MR{2357503 (2008j:57053)}

\bibitem{ochiai}
Mitsuyuki Ochiai, \emph{Heegaard diagrams and {W}hitehead graphs}, Math. Sem.
  Notes Kobe Univ. \textbf{7} (1979), no.~3, 573--591. \MR{567245 (81e:57003)}

\bibitem{osborne}
R.~P. Osborne, \emph{Heegaard diagrams of lens spaces}, Proc. Amer. Math. Soc.
  \textbf{84} (1982), no.~3, 412--414. \MR{640243 (83c:57001)}

\bibitem{OScorrectionterms}
Peter Ozsv{\'a}th and Zolt{\'a}n Szab{\'o}, \emph{Absolutely graded {F}loer
  homologies and intersection forms for four-manifolds with boundary}, Adv.
  Math. \textbf{173} (2003), no.~2, 179--261. \MR{1957829 (2003m:57066)}

\bibitem{OSfoundations2}
\bysame, \emph{Holomorphic disks and three-manifold invariants: properties and
  applications}, Ann. of Math. (2) \textbf{159} (2004), no.~3, 1159--1245.
  \MR{2113020 (2006b:57017)}

\bibitem{OSfoundations1}
\bysame, \emph{Holomorphic disks and topological invariants for closed
  three-manifolds}, Ann. of Math. (2) \textbf{159} (2004), no.~3, 1027--1158.
  \MR{2113019 (2006b:57016)}

\bibitem{OSLspace}
\bysame, \emph{On knot {F}loer homology and lens space surgeries}, Topology
  \textbf{44} (2005), no.~6, 1281--1300. \MR{2168576 (2006f:57034)}

\bibitem{penner-harer}
R.~C. Penner and J.~L. Harer, \emph{Combinatorics of train tracks}, Annals of
  Mathematics Studies, vol. 125, Princeton University Press, Princeton, NJ,
  1992. \MR{1144770 (94b:57018)}

\bibitem{RasmussenSimple}
Jacob Rasmussen, \emph{Lens space surgeries and l-space homology spheres},
  preprint arXiv:0710.2531v1 [math.GT].

\bibitem{pointrule}
Oswald Riemenschneider, \emph{Deformationen von {Q}uotientensingularit\"aten
  (nach zyklischen {G}ruppen)}, Math. Ann. \textbf{209} (1974), 211--248.
  \MR{0367276 (51 \#3518)}

\bibitem{saito}
Toshio Saito, \emph{The dual knots of doubly primitive knots}, Osaka J. Math.
  \textbf{45} (2008), no.~2, 403--421. \MR{2441947 (2009e:57014)}

\bibitem{stallings}
John Stallings, \emph{On fibering certain {$3$}-manifolds}, Topology of
  3-manifolds and related topics ({P}roc. {T}he {U}niv. of {G}eorgia
  {I}nstitute, 1961), Prentice-Hall, Englewood Cliffs, N.J., 1962, pp.~95--100.
  \MR{0158375 (28 \#1600)}

\bibitem{tange}
Motoo Tange, \emph{Lens spaces given from {$L$}-space homology 3-spheres},
  Experiment. Math. \textbf{18} (2009), no.~3, 285--301. \MR{2555699
  (2011c:57046)}

\bibitem{thurston}
William~P. Thurston, \emph{Three-dimensional geometry and topology. {V}ol. 1},
  Princeton Mathematical Series, vol.~35, Princeton University Press,
  Princeton, NJ, 1997, Edited by Silvio Levy. \MR{1435975 (97m:57016)}

\bibitem{wang}
Shi~Cheng Wang, \emph{Cyclic surgery on knots}, Proc. Amer. Math. Soc.
  \textbf{107} (1989), no.~4, 1091--1094. \MR{984820 (90e:57030)}

\bibitem{wangzhou}
Shi~Cheng Wang and Qing Zhou, \emph{Symmetry of knots and cyclic surgery},
  Trans. Amer. Math. Soc. \textbf{330} (1992), no.~2, 665--676. \MR{1031244
  (92f:57017)}

\bibitem{wu}
Ying~Qing Wu, \emph{Cyclic surgery and satellite knots}, Topology Appl.
  \textbf{36} (1990), no.~3, 205--208. \MR{1070700 (91k:57009)}

\bibitem{yamada}
Yuichi Yamada, \emph{Generalized rational blow-down, torus knots and
  {E}uclidean algorithm}, preprint arXiv:0708.2316 [math.GT].

\end{thebibliography}

\end{document}